\newtheorem{thm}{Theorem}
\newtheorem{coro}[thm]{Corollary}
\newtheorem{lemma}[thm]{Lemma}
\newtheorem{propo}[thm]{Proposition}
\theoremstyle{definition}
\newcommand{\Z}{\mathbb{Z}}
\def\G{\mathcal{G}}
\def\Det{\rm{Det}}
\def\tr{\rm{tr}}
\title{The zeta function for circular graphs}
\author{Oliver Knill}
\date{Dec 15, 2013}
\address{
        Department of Mathematics \\
        Harvard University \\
        Cambridge, MA, 02138
        }
\subjclass{Primary:  11M99, 68R10, 30D20, 11M26 }
\keywords{Graph theory, Zeta function}
\begin{document}
\maketitle
\begin{abstract}
We look at entire functions given as the zeta function $\sum_{\lambda>0} \lambda^{-s}$, where $\lambda$
are the positive eigenvalues of the Laplacian of the discrete circular graph $C_n$.
We prove that the roots converge for $n \to \infty$ to the line $\sigma={\rm Re}(s)=1/2$ in the sense that
for every compact subset $K$ in the complement of that line, there is a $n_K$ such that for $n>n_K$, no
root of the zeta function is in $K$. To prove the result we actually look at the Dirac zeta function
$\zeta_n(s) = \sum_{\lambda>0} \lambda^{-s}$ where $\lambda$ are the positive eigenvalues of the Dirac 
operator of the circular graph. In the case of circular graphs, the  Laplace zeta function is 
$\zeta_n(2s)$. 
\end{abstract}

\section{Extended summary}

The zeta function $\zeta_G(s)$ for a finite simple graph $G$ is the entire function
$\sum_{\lambda>0} \lambda^{-s}$ defined by the positive eigenvalues $\lambda$ of the
Dirac operator $D=d+d^*$, the square root of the Hodge Laplacian $L=d d^* + d^* d$ on discrete differential forms.
We study it in the case of circular graphs $G=C_n$, where $\zeta_n(2s)$ agrees with
the zeta function 
$$  \sum_{\lambda >0, \lambda \in \sigma(L)} \lambda^{-s} $$
of the classical scalar Laplacian like
$$ L =  \left[
                 \begin{array}{ccccccccc}
                  2 & -1 & 0 & 0 & 0 & 0 & 0 & 0 & -1 \\
                  -1 & 2 & -1 & 0 & 0 & 0 & 0 & 0 & 0 \\
                  0 & -1 & 2 & -1 & 0 & 0 & 0 & 0 & 0 \\
                  0 & 0 & -1 & 2 & -1 & 0 & 0 & 0 & 0 \\
                  0 & 0 & 0 & -1 & 2 & -1 & 0 & 0 & 0 \\
                  0 & 0 & 0 & 0 & -1 & 2 & -1 & 0 & 0 \\
                  0 & 0 & 0 & 0 & 0 & -1 & 2 & -1 & 0 \\
                  0 & 0 & 0 & 0 & 0 & 0 & -1 & 2 & -1 \\
                  -1 & 0 & 0 & 0 & 0 & 0 & 0 & -1 & 2 \\
                 \end{array}
                 \right] $$
of the circular graph $C_9$. To prove that all roots of the zeta function of $L$ converge for $n \to \infty$ to 
the line $\sigma=1/2$, we show that all roots of $\zeta_n=\zeta_{C_n}$ accumulate for $n \to \infty$ 
on the line $\sigma={\rm Re}(s) = 1$.
The strategy is to deal with three regions: we first verify that for ${\rm Re}(s)>1$ the analytic 
functions $\zeta_n(s) \pi^s/(2 n^s)$ converge to the classical Riemann zeta function which is nonzero 
there. Then, we show that $(\zeta_n(s) - n c(s))/n^s$ converges uniformly on compact sets for $s$ in 
the strip $0<{\rm Re}(s) < 1$, where $c(s)=\int_0^1 \sin(\pi x)^{-s} \; dx$. 
Finally, on ${\rm Re}(s)\leq 0$ we use that $\zeta_n(s)/n$ converges to the average $c(s)$ as a Riemann sum. 
While the later is obvious, the first two statements need some analysis. 
Our main tool is an elementary but apparently new Newton-Coates-Rolle analysis which considers
numerical integration using a new derivative $K_f(x)$ called {\bf K-derivative} which has very similar
features to the Schwarzian derivative. The derivative
$K_f(x)$ has the property that $K_f(x)$ is bounded for $f(x)=\sin(\pi x)^{-s}$ if $s \neq 1$. \\

We derive some values $\zeta_n(2k)$ explicitly as rational numbers 
using the fact that $f(x)=\cot(\pi x)$ is a fixed point of the linear Birkhoff renormalization map 
\begin{equation}
Tf(x)= \frac{1}{n} \sum_{k=0}^{n-1} f(\frac{x}{n} + \frac{k}{n}) \; . 
\label{renormalization}
\end{equation}
This gives immediately the discrete zeta values like the {\bf discrete Basel problem}
$\zeta_n(2)=\sum_{k=1}^{n-1} \lambda_k^{-2} = (n^2-1)/12$ 
which recovers in the limit the classical Riemann zeta values like the 
{\bf classical Basel problem} $\zeta(2)=\sum n^{-2}=\pi^2/6$. 
The exact value $\zeta_n(2)$ in the discrete case is of interest because it is the trace
${\rm Tr}(L^{-1})$ of the Green function of the Jacobi matrix $L$, the Laplacian of the circular 
graph $C_{n}$. The case $s=2$ is interesting especially because 
$-\pi^2 \sin^{-2}(\pi x) = \zeta'(1,x) - \zeta'(1,1-x)$, where $\zeta(s,x)$ is
the Hurwitz zeta function. 
And this makes also the relation with the $\cot$ function clear, 
because $\pi \cot(\pi x) = \zeta(1,x) + \zeta(1,1-x)$ by the cot-formula of Euler.  \\

The paper also includes high precision numerical computations of roots of $\zeta_n$ made to investigate 
at first where the accumulation points are in the limit $n \to \infty$. These computations show that 
the roots of $\zeta_n$ converge very slowly to the critical line $\sigma=1$ and gave us confidence to 
attempt to prove the main theorem stated here. \\

We have to stress that while the entire functions $\zeta_n(s)  \pi^s/(2 n^s)$ approximate the 
Riemann zeta function $\zeta(s)$ for $\sigma={\rm Re}(s)>1$, {\bf there is no direct relation} on and 
below the critical line ${\rm Re}(s)=1$. The functions $\zeta_n$ continue to be analytic everywhere, 
while the classical zeta function $\zeta$ continues only to exist through analytic continuation. 
So, the result proven here {\bf tells absolutely nothing about the Riemann hypothesis}. 
While for the zeta function of the circle, everything behind the abscissa of convergence is foggy 
and only visible indirectly by analytic continuation, we can fly in the discrete case with full 
visibility because we deal with entire functions.  \\

There are conceptual relations however between the classical and discrete zeta functions:
$\zeta_n$ is the Dirac zeta function of the discrete circle $C_n$ and $\zeta$ is the Dirac zeta function of 
the circle $T^1=R^1/Z^1$ which is the Riemann zeta function. 
At first, one would expect that the roots of $\zeta_n$ approach $\sigma=1/2$; but this
is not the case: they approach $\sigma=1$. Only the Laplace zeta function $\zeta_n(2s)$ has the 
roots near $\sigma=1/2$. 
In the proof, we use that the Hurwitz zeta functions $\zeta(s,x)$ 
are fixed points of Birkhoff renormalization operators $Tf(x) = (1/n^s) \sum_{k=0}^{n-1} f((k+x)/n)$, 
a fact which implies the already mentioned fact that $\cot(\pi x)$ fixed point property as a special case
because $\lim_{s \to 1} (\zeta(s,x)-\zeta(s,1-x)) = \pi \cot(\pi x)$ by Euler's cotangent formula. 
The fixed point property of the Hurwitz zeta function is obvious for ${\rm Re}(s)>1$ and clear
for all $s$ by analytic continuation. It is known as a summation formula. \\

Actually, we can look at Hurwitz zeta functions
in the context of a central limit theorem because Birkhoff summation can be seen as a normalized sum of 
random variables even so the random variables $X_k(x) = f((x+k)/n)$ in the sum $S_n = X_1+ \dots + X_n$ 
depend on $n$. For smooth functions which are continuous up to the boundary, the normalized sum converges
to a linear function, which is just $\zeta(0,x)$ normalized to have standard deviation $1$. 
When considered in such a probabilistic setup, Hurwitz Zeta functions play a similar "central"
role as the Gaussian distribution on $R$, the exponential distribution on $R^+$ or Binomial distributions 
on finite sets. 

\section{Introduction}

The symmetric zeta function 
$\sum_{\lambda \neq 0} \lambda^{-s}$ of a finite simple graph $G$ is defined by the nonzero eigenvalues $\lambda$ 
of the Dirac matrix $D=d+d^*$ of $G$, where $d$ is the exterior derivative. 
Since the nonzero eigenvalues come in pairs $-\lambda,\lambda$, all the important information like 
the roots of $\zeta(s)$ are encoded by the entire function 
$$  \zeta_G(s) = \sum_{\lambda > 0} \lambda^{-s} \; , $$
which we call the {\bf Dirac zeta function} of $G$. It is the discrete analogue of the Dirac zeta 
function for manifolds, which for the circle $M=T^1$, where $\lambda_n=n, n \in \Z$, agrees with the {\bf classical 
Riemann zeta function} $\zeta(s) = \sum_{n=1}^{\infty} n^{-s}$. The analytic function $\zeta_G(s)$ has infinitely 
many roots in general, which appear in computer experiments arranged in a strip of the complex plane.
Zeta functions of finite graphs are entire functions
which are not trivial to study partly due to the fact that for fixed $\sigma={\rm Re}(s)$, the function $\zeta(\sigma + i \tau)$ 
is a Bohr almost periodic function in $\tau$, the frequencies being related to the Dirac eigenvalues of the graph. \\

There are many reasons to look at zeta functions, both in number theory as well as in physics 
\cite{kirsten,Elizalde}. One motivation can be that in statistical mechanics, one looks at partition functions
$Z(t)=\sum \exp(-t \lambda)$ of a system with energies $\lambda$. 
Now, $Z'(0) = -\sum (\lambda_k)$ and the $s$'th derivative in a distributional sense is 
$d^s/dt^s Z(0) = \sum_{\lambda} \lambda^s$. If $\lambda$ are the positive eigenvalues of a matrix $A$,
then $Z(t) = {\rm tr}( e^{-t A})$ is a heat kernel. An other motivation is that $\exp(-\zeta'(0))$ is the pseudo 
determinant of $A$ and that the analytic function, like the characteristic polynomial, encodes 
the eigenvalues in a way which allows to recover geometric properties of the graph. As we will see, there 
are interesting complex analytic questions involved when looking at {\bf graph limits}
of {\bf discrete circles} $C_n$ which have the circle as a limit.  \\

This article starts to study the zeta function for such circular graphs $C_n$. The function
\begin{equation}
 \zeta_n(s) = \sum_{k=1}^{n-1} 2^{-s} \sin^{-s}(\pi \frac{k}{n}) 
\end{equation}
is of the form $\sum_{k=1}^{n-1} g(\pi k/n)$ with the complex function $g_s=(2\sin(x))^{-s}$ and
where $\Det(D(C_n))=n$. If $s$ is in the strip $0<\sigma<1$, the function $x \to g_s(x)$ is in $L^1$, but not bounded. 
If $c(s)=\int_0^1 g_s(x) \; dx$, then $\zeta_n(s) - n c(s)$ is a Riemann sum $\sum_{k=1}^{n-1} h(\pi k/n)$ for
some function $h(x)$ on the circle $R/Z$ which satisfies $\int_0^1 h(x) \; dx =0$.
Whenever $h$ is continuous like for $\sigma={\rm Re}(s) <0$, the Riemann
sum $(1/n) \sum_{k=1}^{n-1} h(\pi k/n)$ converges. However, if $h$ is unbounded, this is not
necessarily true any more because some of the points get close to the singularity of $g$. But if - as in our case - 
the Riemann sum is evaluated on a nice grid which only gets $1/n$ close
to a singularity of type $x^{-s}$, one can say more. 
In the critical strip the function is in $L^1(T^1)$. To the right of the critical line $\sigma=1$, the function 
$g_s$ has the property that $\zeta_n(s) \pi^s/(2 n^s)$ converges to the classical Riemann zeta function $\zeta(s)$.
Number theorists have looked at more general sums. Examples are \cite{HL,HL46}. \\

Studying the zeta functions $\zeta_n$ for discrete circles $C_n$ allows us
to compute values of the classical zeta function $\zeta(s)$ in a new way. To do so, 
we use a result from \cite{knillcotangent} telling that $\cot$ is a fixed point of a Birkhoff renormalization 
map (\ref{renormalization}).
We compute $\zeta_{m}(s)$ explicitly for small $k$, solving the Basel problem for circular 
graphs and as a limit for the circle, where it is the classical Basel problem. While these sums have been summed up
before \cite{Dowker}, the approach taken here seems new. \\

To establish the limit $[\zeta_n(s)-n c(s)]/n^s$, we use an adaptation of a Newton-Coates method for 
$L^1$-functions which are smooth in the interior of an interval but unbounded at the boundary. 
This method works, if the {\bf K-derivative}
$$ K_f = \frac{f' f'''}{(f'')^2} $$ 
is bounded $|K_f(x)| \leq M$ for $x \in (0,1)$. The K-derivative has properties similar to the {\bf Schwarzian derivative}.
It is bounded for functions like $\cot(\pi x), \log(x),x^{-s}$ or $(\sin(\pi x))^{-s}$ which appear in the zeta 
function of the discrete circle $C_n$. A comparison table below shows the striking similarities with the Schwarzian derivative.
On the critical line $\sigma=1$, we still see that $\zeta_n(s)/(n \log(n))$ stays bounded. \\

\begin{figure}
\scalebox{0.21}{\includegraphics{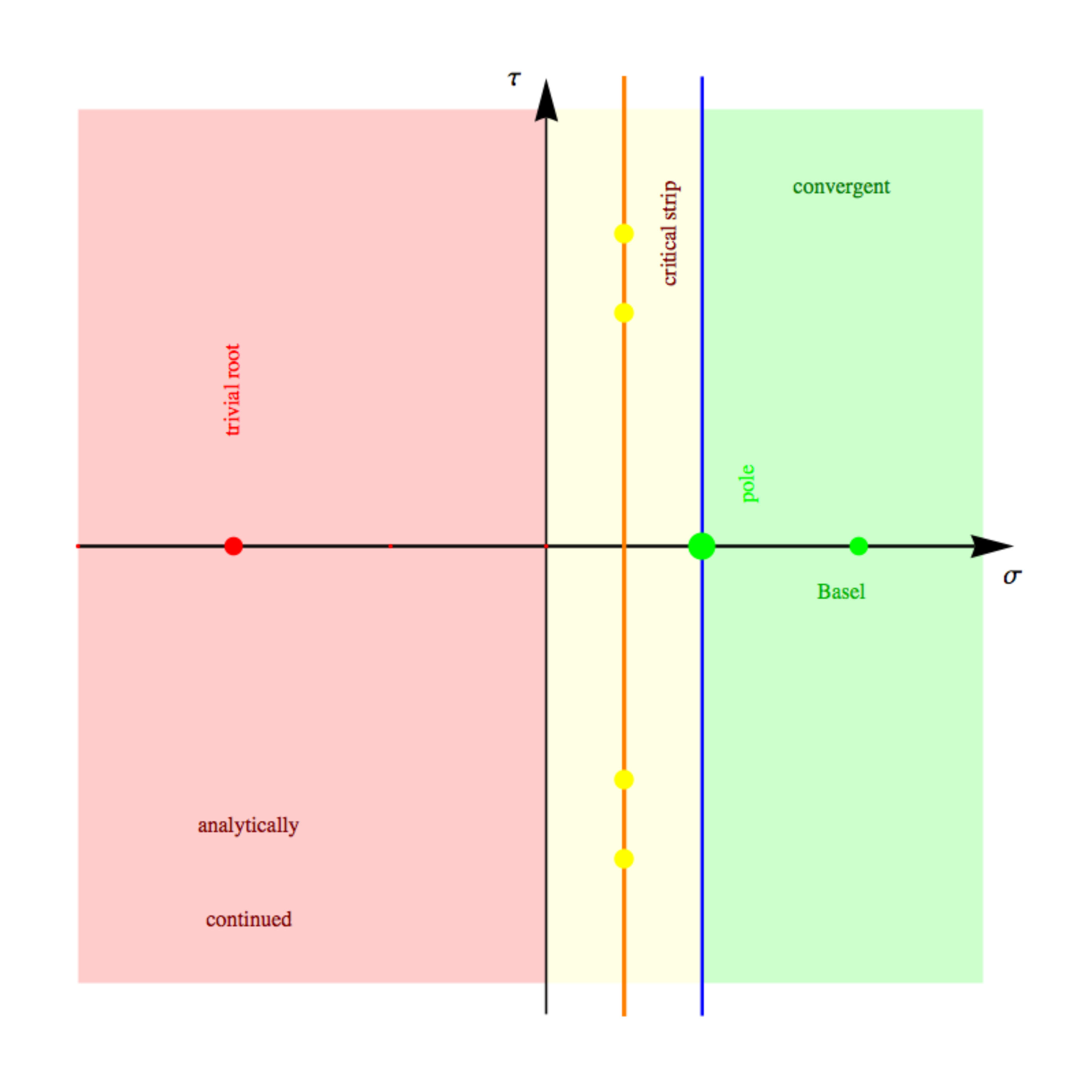}}
\scalebox{0.21}{\includegraphics{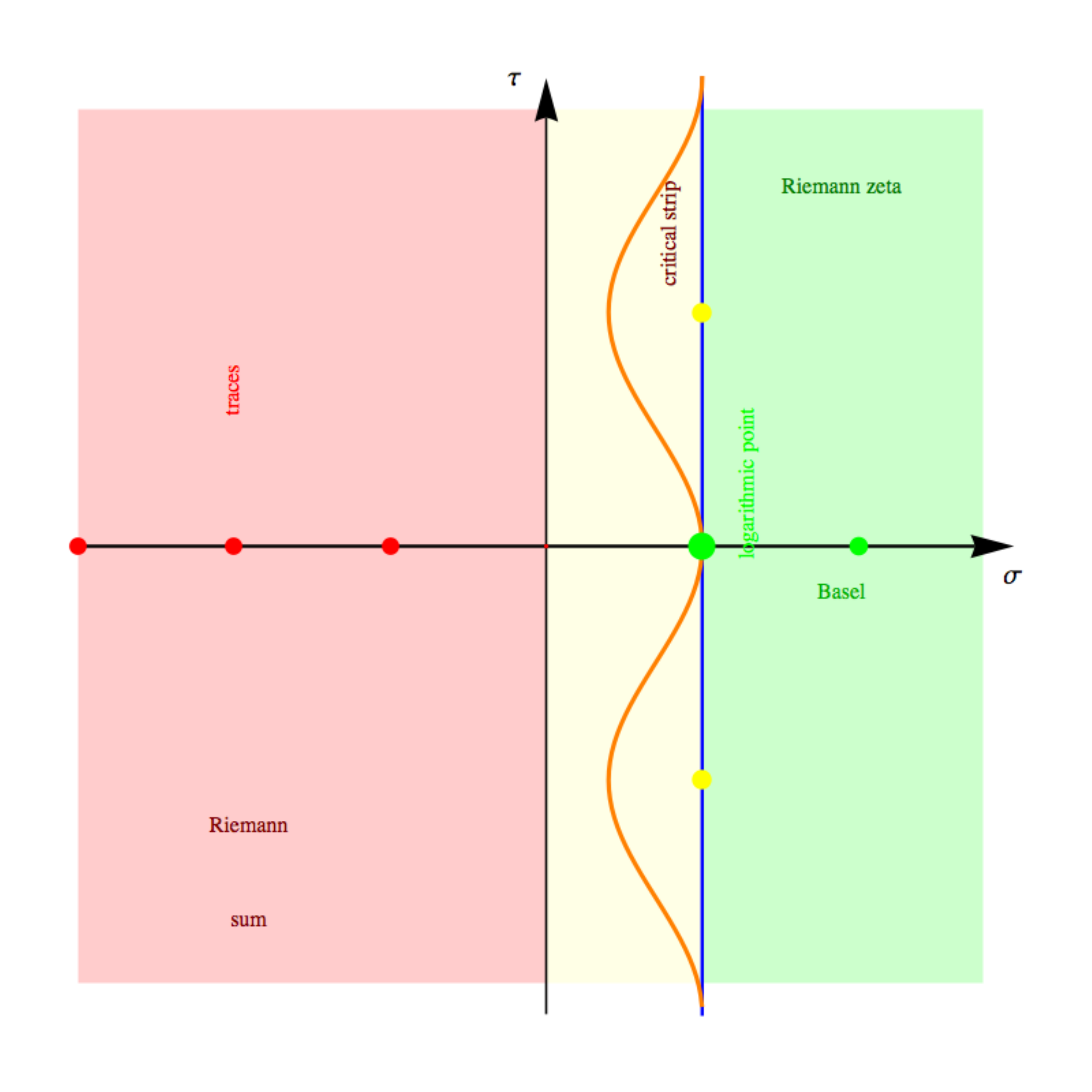}}
\caption{
The Zeta function $\zeta(s)$ of the circle $T^1$ is the classical Riemann zeta function
$\zeta(s) = \sum_{n=1}^{\infty} n^{-s}$. The Zeta function $\zeta_n(s)$ of the circular graph $C_n$ is the discrete
circle zeta function $\zeta_n(s) = \sum_{k=1}^{n-1} (2 \sin(\pi k/n))^{-s}$ which is a finite sum. 
In the limit $n \to \infty$, the roots of the entire functions $\zeta_n$ are on the curve $\sigma=1$.
}
\end{figure}

Various zeta functions have been considered for graphs: there is the discrete analogue of the 
{\bf Minakshisundaraman-Pleijel zeta function}, which uses the eigenvalues of the scalar Laplacian. In a discrete
setting, this zeta function has been mentioned in \cite{fanchunglattice} for the scalar San Diego Laplacian. 
Related is $\sum_{\lambda>0} \lambda^{-s}$,
where $\lambda$ are the eigenvalues of a graph Laplacian like the {\bf scalar Laplacian} $L_0 = B-A$, 
where $B$ is the diagonal 
{\bf degree matrix} and where $A$ is the {\bf adjacency matrix} of the graph. 
In the case of graphs $C_n$, the Dirac zeta function is not different from the Laplace zeta function due to 
Poincar\'e duality which gives a symmetry between 0 and 1-forms so that the Laplace and Dirac zeta function
are related by scaling. If the full Laplacian on forms is considered for $C_n$, this can be written as 
$\zeta(2s)$ if $\zeta(s)$ is the Dirac zeta function for the manifold. 
The {\bf Ihara zeta function} for graphs is the discrete analogue of the {\bf Selberg
zeta function} \cite{Ihara,Cooper,Terras} and is also pretty unrelated to the Dirac zeta function considered here.
An other unrelated zeta function is the {\bf graph automorphism zeta function} \cite{brouwergraph} for graph 
automorphisms is a version of the {\bf Artin-Mazur-Ruelle zeta function} in dynamical systems theory \cite{ruellezeta}.
We have looked at {\bf almost periodic Zeta functions} $\sum_n g(n \alpha)/n^s$ with periodic function $g$ and 
irrational $\alpha$ in \cite{KL}. An example is the polylogarithm $\sum_n \sin(\pi n \alpha)/n^s$. 
Since we look here at $\sum_n \sin^{-s}(n \alpha)$ with rational $\alpha$, one could look at 
$\sum_{k=1}^q \sin^{-s}(\pi k \alpha)$, where $p/q$ are periodic approximations of $\alpha$. For 
special irrational numbers like the golden mean, there are symmetries and the limiting values can in some
sense computed explicitly \cite{KL,KT,knillcotangent}. Fixed point equations of Riemann sums are a common ground
for Hurwitz zeta functions, polylogarithms and the $\cot$ function. \\

In this article we also report on experiments which have led to the main question left open:
to find the limiting set the roots converge to in the critical strip. We will prove that the limiting set is a
subset of the critical line $\sigma=1$ but we have no idea about the nature of this set, whether it is 
discrete or a continuum. Figure~(\ref{situation}) shows the motion of the roots from $n=100$ to $n=10000$ with linearly
increasing $n$ and then from $10000$ to $20480000$, each time doubling $n$. There is a reason for the 
doubling choice: $\zeta_{2n}$ can be written as a sum $\zeta_n$ and a twisted $\zeta_n(1/2)$
and a small term disappearing in the limit. This allows to interpolate roots for $n$ and $2n$ with a continuous
parameter seeing the motion of the roots in dependence of $n$ as a flow on time dependent vector field. This
picture explains why the roots move so slowly when $n$ is increased. 

\section{The Dirac zeta function} 

For a finite simple graph $G=(V,E)$, the Dirac zeta function $\zeta_G(s) = \sum_{\lambda>0} \lambda^{-s}$
is an analytic function. It encodes the spectrum  $\{ \lambda \; \}$ 
of $|D|$ because the traces of $|D|^n$ are recoverable as $\tr(|D|^n)/2 = \zeta(-n)$. Therefore, because
the spectrum of $D$ is symmetric with respect to $0$, we can recover the 
spectrum of $D$ from $\zeta_G(s)$. 
The Hamburger moment problem assures that the eigenvalues are determined from those traces.
For circular graphs $C_n$, the eigenvalues are 
$$   \lambda_k =  \pm 2 \sin(\pi k/n),  \;  k=1 \dots n-1 $$ 
together with two zero 
eigenvalues which belong by Hodge theory to the Betti numbers $b_0=b_1=1$ of $C_n$ for $n \geq 4$. 
The Zeta function $\zeta(s)$ is a Riemann sum for the integral 
$$  c(s) = \int_0^1 2^{-s} \sin^{-s}(\pi x) \; dx =  2^{-s} \frac{\Gamma((1-s)/2)}{\sqrt{\pi}\Gamma(1-s/2)} \; . $$

Besides the circular graph, one could look at other classes of graphs and study the limit. It is not always 
interesting as the following examples show: for the complete graph $K_n$, the nonzero Dirac eigenvalues are $\sqrt{n}$ with 
multiplicity $2^{(2^{n-1}-1)}$ and $\zeta_n(s) = 2^{(2^{n-1}-1)} n^{-s/2}$ do not have any roots.
For star graphs $S_n$, the nonzero Dirac eigenvalues are $1$ with multiplicity $n-1$ and $\sqrt{n}$ 
with multiplicity $1$ so that the zeta function is 
$\zeta_n(s) = (n-1) + n^{-s/2}$ which has a root at $-\frac{2 \log(1-n)}{\log (n)}$. One particular case one could
consider are discrete tori approximating a continuum torus.  Besides graph limits, 
we would like to know the statistics of the roots for random graphs. Some pictures of zeta functions of various
graphs can be seen in Figure~(\ref{diracgraphexamples}). As already mentioned, we stick here to the zeta functions 
of the circular graphs. It already provides a rich variety of problems, many of which are unsettled. \\

Unlike in the continuum, where the zeta function has more information hidden behind the line of convergence - 
the key being analytic continuation -  which allows for example to define notions like Ray-Singer determinant 
as $\exp(-\zeta'(0))$, this seems not that interesting at first in the graph case because the Pseudo determinant 
\cite{cauchybinet} is already explicitly given as the product of the nonzero eigenvalues. Why then study the zeta functions 
at all? One reason is that for some graph limits like graphs approximating Riemannian manifolds, 
an interesting convergence seems to take place and that in the 
circle case, to the right of the critical strip, a limit of $\zeta_n$ leads to the classical Riemann zeta function 
$\zeta$ which is the Dirac zeta function of the circle. As the Riemann hypothesis illustrates, already this 
function is not understood yet. To the left of the critical strip, we have convergence too and the
affine scaling limit $\chi(s) = \lim_{n \to \infty} (\zeta_n(s)-n c(s))/n^s$ exists in the critical strip, a result
which can be seen as a central limit result. Still, the limiting behavior of the roots on the critical line
is not explained. While it is possible that more profound relations between the classical Riemann zeta function and 
the circular zeta function, the relation for $\sigma>1$ pointed out here is only a trivial bridge. It is possible that
the fine structure of the limiting set of roots of circular zeta functions encodes information of the Riemann zeta function
itself and allow to see behind the abscissa of convergence of $\zeta(s)$ but there is no doubt that such an analysis 
would be much more difficult.  \\

Approximating the classical Riemann zeta function by entire functions is possible in many ways.
One has studied partial sums $\sum_{k=1}^n n^{-s}$ in \cite{GonekLedoan,Mora}. 
One can look at zeta functions of triangularizations of compact manifolds $M$ and compare them with the
zeta functions of the later. This has been analyzed for the form Laplacian by \cite{Mantuano} in dimensions
$d \geq 2$.  We will look at zeta functions of circular graphs $C_n(s)$ and their symmetries in form of 
fixed points of Birkhoff sum renormalization maps (\ref{renormalization}),
and get so a new derivation of the values of $\zeta(2n)$ for 
the classical Riemann zeta function. While there is a more general connection for $\sigma>1$ for the
zeta function of any Riemannian manifold $M$ approximated by graphs assured by \cite{Mantuano}, 
we only look at the circle case here, where everything is explicit.  We will prove

\begin{thm}[Convergence]
a) For any $s$ with ${\rm Re}(s)>1$ we have point wise 
$$  \zeta_n(s) \frac{\pi^s}{2 n^s} \to \zeta(s)  \; . $$ 
b) For ${\rm Re}(s) \leq 0$ we have point wise convergence
$$  \frac{\zeta_n(s)}{n} \to c(s) \neq 0  \; . $$
c) For $0 < {\rm Re}(s) < 1$, we have point wise convergence 
of $[\zeta_n(s) -n c(s)]/\sigma(\zeta_n)$ to a nonzero function. \\
In all cases, the convergence is uniform on compact subsets of the 
corresponding open regions. 
\label{maintheorem}
\end{thm}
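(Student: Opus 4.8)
The plan is to treat the three regions separately, in each case rewriting the normalized quantity as a Riemann-type sum and supplying an error estimate that is uniform in $s$ over compact sets; complex analyticity then upgrades the pointwise statements to uniform-on-compacta statements automatically, but it is cleanest to carry the uniformity through the estimates directly. For part (a), with ${\rm Re}(s)>1$, I would use the symmetry $\sin(\pi k/n)=\sin(\pi(n-k)/n)$ to fold $\zeta_n(s)\pi^s/(2n^s)$ onto indices $1\le k\le n/2$, write the $k$-th summand as a power $(\,\cdot\,)^{-s}$ of a rescaled eigenvalue that, for fixed $k$, tends to a fixed multiple of $k$ as $n\to\infty$, and note that the concavity bound $\sin(\pi k/n)\ge 2k/n$ on $1\le k\le n/2$ furnishes a majorant $\le C\,k^{-{\rm Re}(s)}$ with $C$ locally bounded in $s$. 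Dominated convergence for series then gives the pointwise limit, a multiple of $\zeta(s)=\sum_{k\ge1}k^{-s}$ that equals $\zeta(s)$ by the choice of normalization; since both the termwise limit and the majorant are locally uniform in $s$, the convergence is uniform on compact subsets of $\{{\rm Re}(s)>1\}$, and $\zeta(s)\neq0$ there by the Euler product.

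For part (b), with ${\rm Re}(s)\le0$, the function $F_s(x)=(2\sin(\pi x))^{-s}$ is bounded on $[0,1]$ (since $-{\rm Re}(s)\ge0$ and $0\le2\sin(\pi x)\le2$) and Riemann integrable, so $\zeta_n(s)/n=\tfrac1n\sum_{k=1}^{n-1}F_s(k/n)$ is an ordinary Riemann sum converging to $c(s)=\int_0^1F_s$. Uniformity on compacta follows from the uniform bound together with the equi-Lipschitz estimate for $F_s$ on $[\delta,1-\delta]$ coming from $F_s'=-\pi s\cot(\pi x)F_s$, after discarding $\delta$-neighbourhoods of the endpoints whose contribution is $O(\delta)$ uniformly and then letting $\delta\to0$. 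Nonvanishing of $c(s)$ is read off from the closed form $c(s)=2^{-s}\Gamma((1-s)/2)/(\sqrt\pi\,\Gamma(1-s/2))$: $\Gamma$ has no zeros, and its poles sit at $s=1,3,5,\dots$ and $s=2,4,\dots$, disjoint from $\{{\rm Re}(s)\le0\}$. This is the "obvious" case the introduction refers to.

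For part (c), with $0<{\rm Re}(s)<1$, I would prove the sharper statement that $[\zeta_n(s)-nc(s)]/n^{s}$ converges locally uniformly to an explicit function $\chi(s)$ that is not identically zero, which yields the normalized statement. Here $F_s$ lies in $L^1(0,1)$ but is unbounded, with $F_s(x)=(2\pi x)^{-s}(1+O(x^2))$ as $x\to0^+$ and symmetrically at $x=1$. Starting from
\begin{equation*}
\zeta_n(s)-nc(s)=\sum_{k=1}^{n-1}\Big(F_s(k/n)-n\!\!\int_{k/n}^{(k+1)/n}\!\!F_s\Big)-n\!\!\int_{0}^{1/n}\!\!F_s ,
\end{equation*}
the endpoint term is $n\int_0^{1/n}F_s=\tfrac{(2\pi)^{-s}}{1-s}\,n^{s}+o(n^s)$ (and similarly near $x=1$), so it already lives on the scale $n^{s}$; the finitely many near-endpoint terms of the sum, divided by $n^s$, converge to an absolutely convergent series of Hurwitz-zeta type; and the bulk of the sum is a quadrature error for a function smooth only on $[\delta,1-\delta]$. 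This is where I would invoke the paper's Newton--Coates / K-derivative machinery: because $K_{F_s}$ is bounded on $(0,1)$ for $s\neq1$, uniformly for $s$ in a compact subset of the strip, the interior quadrature error is $o(n^{s})$ uniformly, and collecting the surviving pieces identifies $\chi(s)$.

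The main obstacle is exactly this decomposition in (c): estimating the quadrature error uniformly up to the singular endpoints, where the usual second-derivative bound is unavailable and the boundedness of $K_{F_s}$ for $x^{-s}$-type singularities is what makes the Newton--Coates argument work, and then verifying that the resulting $\chi(s)$ does not vanish identically. For the latter I would either compare with the known fixed-point identity for the Hurwitz zeta function under the Birkhoff renormalization $Tf(x)=n^{-s}\sum_k f((k+x)/n)$, or simply evaluate $\chi$ at a convenient real $s\in(0,1)$, where the dominant boundary contribution $\tfrac{(2\pi)^{-s}}{1-s}$ leaves no room for cancellation. Once (a)--(c) hold with all error bounds uniform in $s$ over compacta, the final clause on uniform convergence is immediate; and as a byproduct feeding the main theorem one reads off that, for $n$ large, $\zeta_n$ has no zeros on any compact subset of $\{{\rm Re}(s)>1\}\cup\{0<{\rm Re}(s)<1\}\cup\{{\rm Re}(s)\le0\}$ (using $\zeta(s)\neq0$ for ${\rm Re}(s)>1$, $c(s)\neq0$ together with $|nc(s)|\gg|n^{s}\chi(s)|$ in the strip, and $c(s)\neq0$ on ${\rm Re}(s)\le0$), so that all zeros must accumulate on the line ${\rm Re}(s)=1$.
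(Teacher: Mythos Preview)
Your proposal is correct in outline and reaches the same conclusions, but it diverges from the paper's argument in two places. For (a) you use the concavity bound $\sin(\pi k/n)\ge 2k/n$ together with dominated convergence for series; the paper instead subtracts the principal part $(\pi x)^{-s}$ from $\sin^{-s}(\pi x)$ to form $h(x)=2\cdot 2^{-s}\sin^{-s}(\pi x)-2^{-s}(\pi x)^{-s}$ and then shows $S_n/n^s\to 0$ for $S_n=\sum_k h(k/n)$ using the same K-derivative/Newton--Cotes machinery it builds for the strip. Your route is more elementary and needs nothing from the appendix for $\sigma>1$; the paper's route has the virtue that a single tool handles both (a) and (c). For (c) you split off near-endpoint contributions (producing a Hurwitz-type series on the scale $n^{s}$) and control the interior by ordinary quadrature sharpened with the K-derivative bound; the paper instead applies its Rolle--Newton--Cotes estimate \emph{globally} on $(0,1)$ with no endpoint/interior split, exploiting that $K_g$ remains bounded right up to the singularity for $g(x)=\sin^{-s}(\pi x)$, and arrives directly at the representation $(\zeta_n(s)-nc(s))/n^{s}=n^{-2-s}\sum_k K(k/n)\sin^{-2-s}(\pi k/n)$. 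Your decomposition makes the limiting function $\chi(s)$ more explicit, and your nonvanishing check via the dominant boundary term $(2\pi)^{-s}/(1-s)$ is cleaner than anything the paper spells out; the paper's global estimate is shorter once the appendix is granted. Part (b) is handled the same way in both.
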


This will be shown below. An immediate consequence is: 

\begin{coro}[Roots approach critical line]
For any compact set $K$ and $\epsilon>0$, there exists $n_0$ so that for 
$n>n_0$, the roots of $\zeta_n(s)$ are outside 
$K \cap \{ 1-\epsilon < {\rm Re}(s) < 1+\epsilon \; \}$.
\end{coro}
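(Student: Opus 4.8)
The plan is to derive the Corollary directly from Theorem~\ref{maintheorem} by a normal-families / Hurwitz-type argument applied separately on the three regions $\{1 < {\rm Re}(s) < 1+\epsilon\}$, $\{{\rm Re}(s) = 1\}$ is excluded, and $\{1-\epsilon < {\rm Re}(s) < 1\}$, together with a trivial estimate at ${\rm Re}(s)=1$ itself. Fix a compact set $K$ and $\epsilon>0$; write $K_\epsilon = K \cap \{1-\epsilon \le {\rm Re}(s) \le 1+\epsilon\}$, which is again compact, and split it as $K_\epsilon^+ = K_\epsilon \cap \{{\rm Re}(s) \ge 1\}$ and $K_\epsilon^- = K_\epsilon \cap \{{\rm Re}(s) \le 1\}$ (they overlap on the line ${\rm Re}(s)=1$, which will be handled once). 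The claim is that for $n$ large no root of $\zeta_n$ lies in $K_\epsilon$.

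First I would handle $K_\epsilon^+ \setminus \{{\rm Re}(s)=1\}$, i.e. the part with ${\rm Re}(s) > 1$. By Theorem~\ref{maintheorem}(a), the entire functions $g_n(s) := \zeta_n(s)\,\pi^s/(2 n^s)$ converge to $\zeta(s)$ uniformly on compact subsets of $\{{\rm Re}(s)>1\}$. The Riemann zeta function is zero-free on $\{{\rm Re}(s)>1\}$ (Euler product), so on the compact set $K_\epsilon^+ \cap \{{\rm Re}(s) \ge 1 + \delta\}$ (any $\delta>0$) we have $|\zeta(s)| \ge m > 0$; uniform convergence then gives $|g_n(s)| \ge m/2$ for $n$ large, and since $\pi^s/(2n^s)$ is nowhere zero this forces $\zeta_n(s) \ne 0$ there. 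To cover the sliver $1 < {\rm Re}(s) < 1+\delta$ and the line itself, I would instead argue on $K_\epsilon^-$ and at ${\rm Re}(s)=1$ as follows. On a compact subset of the strip $0 < {\rm Re}(s) < 1$, Theorem~\ref{maintheorem}(c) says $[\zeta_n(s) - n\,c(s)]/\sigma(\zeta_n)$ converges uniformly to a nonzero holomorphic function $\chi(s)$. Writing $\zeta_n(s) = n\,c(s) + \sigma(\zeta_n)\big(\chi(s) + o(1)\big)$, the dominant term is $n\,c(s)$ provided $\sigma(\zeta_n) = o(n)$ on compacts of the strip; this is exactly the growth information extracted in the proof of part (c) (the scaling $\sigma(\zeta_n) \sim C n^{{\rm Re}(s)}$ with ${\rm Re}(s)<1$, or at worst $n\log n$ at the line). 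Since $c(s) = 2^{-s}\Gamma((1-s)/2)/(\sqrt\pi\,\Gamma(1-s/2))$ is holomorphic and zero-free on $0 < {\rm Re}(s) < 1$ (the $\Gamma$ in the numerator has no poles there and the denominator $\Gamma(1-s/2)$ is finite and nonzero), we get $|\zeta_n(s)| \ge \tfrac12\, n\,|c(s)| - \sigma(\zeta_n)(|\chi(s)|+1) > 0$ for $n$ large, uniformly on the compact piece of the open strip.

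The genuinely delicate point is the closed line ${\rm Re}(s)=1$, where Theorem~\ref{maintheorem} as stated does not directly give a normalized limit. Here I would use the refined estimate already recorded in the introduction, namely that $\zeta_n(s)/(n\log n)$ stays bounded on compact subsets of ${\rm Re}(s)=1$, together with the asymptotic $\zeta_n(s) = n\,c(s) + O(\log n)$ valid there (this is the $s\to 1$ limiting form of the K-derivative Newton--Coates analysis, with $c(1)$ interpreted as the principal-value integral $\int_0^1 (2\sin\pi x)^{-1}\,dx$, which is finite). As long as $c(s) \ne 0$ on the relevant compact arc of ${\rm Re}(s)=1$ — which holds except possibly at isolated points, and one checks $c(1+i\tau)\ne 0$ using the Gamma-function expression, noting $\Gamma(-i\tau/2)$ has no zeros — the leading term $n\,c(s)$ dominates the $O(\log n)$ remainder and again $\zeta_n(s)\ne 0$ for large $n$. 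The main obstacle, then, is precisely controlling $\zeta_n$ in a full neighborhood of the line ${\rm Re}(s)=1$ where the three regimes meet and the normalizations $\sigma(\zeta_n)$ change character; everywhere else the Corollary is a soft consequence of uniform convergence to a zero-free limit. Once all finitely many compact pieces are covered, taking the maximum of the finitely many thresholds $n_0$ gives a single $n_0$ that works for all of $K_\epsilon$, which is the assertion of the Corollary. $\hfill\square$
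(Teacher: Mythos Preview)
You have read the Corollary literally, and as literally written it asks you to exclude roots from the $\epsilon$-neighborhood of $\sigma=1$ inside $K$. But that is the opposite of what the paper actually proves: the title of the Corollary is ``Roots approach critical line'', and the abstract states that for every compact $K$ in the \emph{complement} of the line there are eventually no roots in $K$. The set in the Corollary should be $K\setminus\{1-\epsilon<\sigma<1+\epsilon\}$, not $K\cap\{1-\epsilon<\sigma<1+\epsilon\}$. With the intended statement, the Corollary really is an immediate consequence of Theorem~\ref{maintheorem}: the set $K\setminus\{1-\epsilon<\sigma<1+\epsilon\}$ is compact and bounded away from $\sigma=1$, hence covered by finitely many compact pieces lying in $\{\sigma>1\}$, in $\{0<\sigma<1\}$, or in $\{\sigma\le 0\}$; on each piece the relevant normalization of $\zeta_n$ converges uniformly to a zero-free analytic limit, and Hurwitz gives the conclusion. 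No analysis on the line $\sigma=1$ is needed, or possible with the tools of the paper.

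Your argument on $\{\sigma>1\}$ and on $\{0<\sigma<1\}$ is essentially the paper's argument and is fine. The genuine gap is your treatment of $\sigma=1$, which you correctly flag as ``the genuinely delicate point'' but then handle incorrectly. First, $c(1)$ is not finite in any principal-value sense: the integrand $(2\sin\pi x)^{-1}$ is positive and has nonintegrable singularities at both endpoints, and the Gamma expression for $c(s)$ has an honest pole at $s=1$ coming from $\Gamma((1-s)/2)$. Second, the estimate $\zeta_n(s)=n\,c(s)+O(\log n)$ on $\sigma=1$ is not established anywhere in the paper; the paper only records that $\zeta_n(s)/(n\log n)$ appears bounded and says explicitly ``We do not know whether there are roots of $\zeta_n(s)$ accumulating for $n\to\infty$ on $\sigma=1$.'' In fact all the numerical evidence in the paper shows roots sitting arbitrarily close to $\sigma=1$, so the literal statement you set out to prove is almost certainly false. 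Once you reinterpret the Corollary with the complement, the difficulty on $\sigma=1$ simply disappears.
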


The main question is now whether
$$  \zeta_n(s) = \sum_{k=1}^{n-1} \frac{1}{2^s \sin^{s}(\pi \frac{k}{n})} \; $$
has roots converging to a discrete set, to a Cantor type set  or to a curve $\gamma$ 
on  the critical line ${\rm Re}(s)=1$.  \\

If the roots should converge to a discrete set of isolated points, one could settle this 
with the argument principle and show that a some complex integral $\int_C f'(s)/f(s) \; ds$ 
along some rectangular paths do no more move.  
Besides a curve or discrete set $\{b_1,b_2, \dots \; \}$ the roots could  accumulate as a 
Hausdorff limit on compact set or Cantor set but the later is not likely. 
From the numerical data we would not be surprised if the limiting set would coincide
with the critical line $\sigma=1$. \\

We have numerically computed contour integrals 
$$ \frac{1}{2\pi i} \int_C frac{\zeta'(z)}{\zeta(z)} \; dz  $$
counting the number of roots of $\zeta_n$ inside rectangles
enclosed by  $C: (0,a) \to (1,a) \to (1,b) \to (0,b)$ to confirm the location of roots. That works reliably
but it is more expensive than finding the roots by Newton iteration. 
In principle, one could use methods developed in 
\cite{JohnsonTucker} to rigorously establish the existence of roots for specific 
$n$ but that would not help since we want to understand the limit $n \to \infty$.
If the roots should settle to a discrete set, then \cite{JohnsonTucker} would be useful to prove that this is the case: 
look at a contour integral along a small circle along the limiting root and show that it does not change in the limit 
$n \to \infty$. We currently have the impression however that the roots will settle to a continuum curve on $\sigma=1$. 

\section{Specific values}

We list now specific values of 
$$ \zeta_n(s) = \frac{1}{2^s \sin^s(\pi \frac{1}{n})} + \cdots + \frac{1}{2^s \sin^s(\pi \frac{n-1}{n})} \; . $$
As a general rule, it seems that one can compute the values $\zeta_n(s)$ for the discrete circle explicitly,
if and only the specific values for the classical Riemann zeta function $\zeta(s)$ of the circle are known. 
So far, we have only anecdotal evidence for such a meta rule. 

\begin{lemma}
For real integer values $s=m>0$, we have 
$$  \zeta_n(-2m) = \frac{\tr(L^m)}{2} = n \sum_{k=0}^m B(m,k)^2 \; ,  $$
where $L$ is the form Laplacian of the circular graph $C_n$. 
\end{lemma}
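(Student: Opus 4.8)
The plan is to compute the even negative zeta values $\zeta_n(-2m)$ directly from the spectral definition and then identify the resulting trace combinatorially. First I would use the identity $\tr(|D|^{n})/2 = \zeta_n(-n)$ noted earlier in the section on the Dirac zeta function: since the nonzero eigenvalues of $D$ on $C_n$ are $\pm 2\sin(\pi k/n)$, each value $2\sin(\pi k/n)$ occurring once as a $0$-form eigenvalue and once as a $1$-form eigenvalue by Poincar\'e duality, we get $\zeta_n(-2m) = \sum_{k=1}^{n-1} (2\sin(\pi k/n))^{2m}$. On the other hand the scalar Laplacian $L_0 = B-A$ of $C_n$ is a circulant matrix with eigenvalues $2 - 2\cos(2\pi k/n) = 4\sin^2(\pi k/n)$ for $k=0,\dots,n-1$, the $k=0$ eigenvalue being $0$. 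Hence $\tr(L_0^m) = \sum_{k=0}^{n-1} (4\sin^2(\pi k/n))^m = \sum_{k=1}^{n-1} (2\sin(\pi k/n))^{2m}$, and accounting for the doubling from the full form Laplacian $L = L_0 \oplus L_1 \cong L_0 \oplus L_0$ on $0$- and $1$-forms, this gives $\zeta_n(-2m) = \tr(L^m)/2 = \tr(L_0^m)$. This settles the first equality.

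For the second equality, the key step is to evaluate $\tr(L_0^m)$ by a combinatorial expansion. Writing $L_0 = 2I - S - S^{-1}$ where $S$ is the cyclic shift on $\mathbb{Z}/n$, I would expand $L_0^m = (2I - S - S^{-1})^m$ via the multinomial theorem and collect the coefficient of the identity matrix $I = S^0$, since $\tr(S^j) = 0$ unless $j \equiv 0 \pmod n$ (and for $1 \le m < n$ the only contributing term is $j=0$; the general case needs a small wrap-around remark). This reduces the trace to $n$ times a single integer coefficient. I expect this coefficient to equal $\sum_{k=0}^m B(m,k)^2$ once the quantities $B(m,k)$ are identified — most naturally as the entries of a Pascal-type triangle, i.e. the number of lattice paths or equivalently the coefficients appearing in $(1 - x)^{2m}$ or in a Motzkin/ballot-style count. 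Concretely, the coefficient of $S^0$ in $(2 - S - S^{-1})^m = (-1)^m (S^{1/2} - S^{-1/2})^{2m}$-type rewriting makes the central binomial structure visible: expanding $(S^{1/2}-S^{-1/2})^{2m}$ gives $\sum_j \binom{2m}{j}(-1)^j S^{m-j}$, whose constant term is $\binom{2m}{m}$, and one then rewrites $\binom{2m}{m}$ as a sum of squares $\sum_k \binom{m}{k}^2$ by Vandermonde — so $B(m,k) = \binom{m}{k}$.

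The main obstacle I anticipate is pinning down exactly what $B(m,k)$ denotes in the author's conventions and verifying the Vandermonde-type identity $\binom{2m}{m} = \sum_{k=0}^m \binom{m}{k}^2$ lines up with the normalization chosen (including possible signs and the factor $(-1)^m$, which must cancel because $\zeta_n(-2m)$ is manifestly a sum of non-negative reals). A secondary, more technical point is the wrap-around: for $m \ge n$ additional terms $S^{\pm n}, S^{\pm 2n}, \dots$ contribute to the trace, so the clean formula $\tr(L_0^m) = n B$ with a single coefficient $B$ holds verbatim only for $m < n$; for larger $m$ one gets $\tr(L_0^m) = n \sum_{\ell} (\text{coeff of } S^{\ell n})$, and I would either restrict the statement's intended range or absorb these corrections. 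Modulo that bookkeeping, the proof is a direct spectral-plus-combinatorial computation with no analytic input.
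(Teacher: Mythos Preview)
Your argument is correct, and your identification $B(m,k)=\binom{m}{k}$ is the intended one: the coefficient of $S^0$ in $(2I-S-S^{-1})^m$ is indeed $\binom{2m}{m}$, and Vandermonde rewrites this as $\sum_k \binom{m}{k}^2$. The sign worry is harmless, exactly as you suspected: writing $2-S-S^{-1}=-(S^{1/2}-S^{-1/2})^2$ as a formal Laurent identity, the constant term picks up $(-1)^m\cdot(-1)^m=1$. Your wrap-around caveat for $m\ge n$ is a genuine restriction that the paper does not address either; the stated formula is literally correct only for $m<n$.

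The paper takes a different, purely combinatorial route. Rather than expanding the circulant $2I-S-S^{-1}$ algebraically, it reads a diagonal entry of $L^m=D^{2m}$ as a signed count of closed walks of length $2m$ in the simplex graph of $C_n$: at each step the Dirac operator offers exactly two nonzero entries, which the paper encodes as lattice steps ``right'' or ``up'', and returning to the starting simplex forces $m$ steps of each kind. This yields directly the lattice-path count $\binom{2m}{m}$ per simplex, and the decomposition by where the path sits at step $m$ gives $\sum_k\binom{m}{k}^2$ without invoking Vandermonde as a separate identity. So the paper's approach is a bijective picture of the same computation; your circulant expansion is the generating-function shadow of it. Your version has the advantage of being fully explicit and easy to make rigorous (including isolating the wrap-around terms), while the paper's version explains \emph{why} squares of binomials appear.
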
 
\begin{proof}
Start with a point $(0,0)$ and take a simplex $x$ (edge or vertex), then choosing a nonzero entry of $Dx$ is 
either point to the right or up. In order to have a diagonal entry of $D^m$, we have to hit $m$ times $-1$ 
and $m$ times $+1$. 
\end{proof}

The traces of the Laplacian have a path interpretation if the diagonal entries are added as loops of negative
length. For the adjacency matrix $A$ one has that ${\rm tr}(A^n)$ is the number of closed paths of length
$n$ in the graph. We can use that $L=D^2$ and $|D|$ is the adjacency matrix of a simplex 
graph $\G$. But it is not $|D|^2=D^2$. 

For $n=0$, where $\tr(D^0)=2n$ but $\zeta_n(0) = 2n-2$, we have a discrepancy;
the reason is that we have defined the sum $\sum_{\lambda \neq 0} \lambda^{-s}$ 
which for $s=0$ does not count two cases of $0^0$, which the trace includes. 
Formulas like $2 \sum_{k=1}^{n-1} 2^m \sin^m(\pi k/n) = {\rm tr}(D^m)$ relate combinatorics
with analysis. It is Fourier theory in the circular case. 

\begin{itemize}
\item For $s=0$ we have $\zeta_n(s) = 2n-2$ because the sum has $2n-2$ terms $1$. 
\item For $s=-1$ we have $\sum_{k=1}^{n-1} 2 \sin(\pi k/n) = {\rm Im}(4/(1-e^{i \pi/n})$
which as a Riemann sum converges for $n \to \infty$ 
to $2 \Gamma(1)\sqrt{\pi} \Gamma(1/2)) = 4/\pi = 1.27324 \dots$. 
\item For $s=-2$ we have $\sum_{k=1}^{n-1} 2^2 \sin^2(\pi k/n) = 2n$. 
\item For $s=1$, we see convergence $\zeta_n/(n \log(n)) \to 0.321204$. 
Since $c(1)=\infty$ this agrees in a limiting case with the convergence of 
$(\zeta_n-n c(1))/n = \zeta_n/n - c(1)$.
\item For $s=2$, we deal with the discrete analogue of the {\bf Basel problem} and the sum 
$\zeta_n(2) = (1/4) \sum_{k=1}^{n-1} 1/\sin^2(\pi k/n)$. We see that 
$\zeta_n(2)/n^2 \to (1-n^2)/12$ which implies $\zeta(2) = \pi^2/6$. 
\item For $s=3$, we numerically computed $\zeta_n(3)/n^3$ for $n=10^7$ as 
$0.009692044$. Since no explicit expressions are known for $\zeta(3)$, it is unlikely 
also that explicit expressions for $\zeta_n$ will be available in the discrete. 
\item For $s=4$, we will derive the value $1/720$. Also this will reestablish the known value
value $\zeta(4) = \pi^2/90$ for the Riemann zeta function. 
\end{itemize}

\section{The Basel problem}

The finite analogue of the Basel problem is the quest to find an explicit formula for 
$$  A(n) = \frac{\zeta_n(2)}{n^2} = \frac{1}{n^2} \sum_{k=1}^{n-1} \sin^{-2}(\frac{\pi k}{n})  \; . $$ 
Curiously, similar type of sums
have been tackled in number theory by Hardy and Littlewood already \cite{HL,HL46}
and where the sums are infinite over an irrational rotation number. We have 

\begin{lemma}
$\frac{1}{n^2} \sum_{k=1}^{n-1} \sin^{-2}(\frac{\pi k}{n}) = \frac{1-1/n^2}{3}$. 
\end{lemma}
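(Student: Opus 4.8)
The plan is to deduce the identity from the Birkhoff renormalization fixed point property of the cotangent recalled above, by differentiating it once. With $T$ the operator in (\ref{renormalization}), the function $g(x)=\pi\cot(\pi x)$ satisfies $Tg=g$, i.e.
$$ \pi\cot(\pi x) \;=\; \frac{1}{n}\sum_{k=0}^{n-1}\pi\cot\Bigl(\frac{\pi(x+k)}{n}\Bigr) $$
for $x\in(0,1)$. Differentiating in $x$ and using $\frac{d}{dx}\cot(\pi x)=-\pi\csc^2(\pi x)$ together with the chain rule (each inner argument contributes a factor $1/n$) converts this into a fixed point relation for $\csc^2$:
$$ \csc^2(\pi x) \;=\; \frac{1}{n^2}\sum_{k=0}^{n-1}\csc^2\Bigl(\frac{\pi(x+k)}{n}\Bigr) \, . $$
The sum $\sum_{k=1}^{n-1}\csc^{-2}$ we are after is then extracted as an $x\to 0^+$ limit of the $k\neq 0$ part of the right hand side.

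Concretely I would proceed in three steps. First, write down the differentiated identity above, valid on $(0,1)$ since none of the arguments hits an integer. Second, split off the $k=0$ term and rearrange to
$$ \csc^2(\pi x)-\frac{1}{n^2}\csc^2\Bigl(\frac{\pi x}{n}\Bigr) \;=\; \frac{1}{n^2}\sum_{k=1}^{n-1}\csc^2\Bigl(\frac{\pi(x+k)}{n}\Bigr) \, . $$
Third, let $x\to 0^+$. The right hand side is continuous at $x=0$ — each argument tends to $\pi k/n\in(0,\pi)$ — so it converges to $\frac{1}{n^2}\sum_{k=1}^{n-1}\csc^2(\pi k/n)$. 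On the left hand side insert the Laurent expansion $\csc^2(\pi x)=\frac{1}{\pi^2 x^2}+\frac13+O(x^2)$, which follows from $\sin(\pi x)=\pi x\bigl(1-\tfrac{(\pi x)^2}{6}+\cdots\bigr)$; then $\frac{1}{n^2}\csc^2(\pi x/n)=\frac{1}{\pi^2 x^2}+\frac{1}{3n^2}+O(x^2)$, the singular terms $1/(\pi^2x^2)$ cancel, and the left hand side tends to $\frac13-\frac{1}{3n^2}$. Equating the two limits gives $\frac{1}{n^2}\sum_{k=1}^{n-1}\csc^2(\pi k/n)=\frac{1-1/n^2}{3}$, the assertion.

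The only delicate point is that last step, the cancellation of the two double poles at $x=0$; everything else is a routine manipulation of a known identity, so I do not expect a real obstacle. As a backup that avoids invoking \cite{knillcotangent}, a fully self-contained route is available: set $\omega_k=e^{2\pi ik/n}$ and $P(z)=\frac{z^n-1}{z-1}=1+z+\cdots+z^{n-1}=\prod_{k=1}^{n-1}(z-\omega_k)$, so that $\sum_{k=1}^{n-1}(z-\omega_k)^{-2}=\bigl(P'(z)/P(z)\bigr)^2-P''(z)/P(z)$. Evaluating at $z=1$ with $P(1)=n$, $P'(1)=\binom{n}{2}$, $P''(1)=\frac{n(n-1)(n-2)}{3}$ yields $S:=\sum_{k=1}^{n-1}(1-\omega_k)^{-2}=\frac{(n-1)(5-n)}{12}$. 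Since $1-\omega_k=-2i\sin(\pi k/n)e^{i\pi k/n}$ one has $(1-\omega_k)^{-2}=-\tfrac14\csc^2(\pi k/n)e^{-2\pi ik/n}$; taking real parts, using $\cos(2\pi k/n)=1-2\sin^2(\pi k/n)$ and the symmetry $\sum_{k=1}^{n-1}\cot(\pi k/n)=0$, gives $S=-\tfrac14\bigl(\sum_{k=1}^{n-1}\csc^2(\pi k/n)-2(n-1)\bigr)$, whence $\sum_{k=1}^{n-1}\csc^2(\pi k/n)=2(n-1)-4S=\frac{n^2-1}{3}$, the same conclusion.
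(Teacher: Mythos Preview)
Your primary argument is exactly the paper's: differentiate the $\cot$ fixed-point identity for the Birkhoff operator $T$ to get the $\csc^2$ renormalization, split off the $k=0$ term, and read off the constant term of the Laurent expansion of $\csc^2(\pi x)-n^{-2}\csc^2(\pi x/n)$ at $x=0$; the paper does precisely this (phrasing it as ``the right hand side is analytic at $0$ and equals $(1-1/n^2)/3+\dots$'').

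Your backup via $P(z)=(z^n-1)/(z-1)$ is a genuinely different, fully self-contained route that the paper does not take: it replaces the analytic renormalization input from \cite{knillcotangent} by elementary symmetric-function identities for the roots of unity. The trade-off is that the renormalization approach scales cleanly to $\zeta_n(2m)$ for higher $m$ (as the paper exploits for $m=2$ by differentiating further), whereas the polynomial method would require increasingly messy higher logarithmic derivatives of $P$ at $z=1$; on the other hand, your backup avoids invoking the fixed-point property as a black box and gives the exact value directly without a limiting argument.
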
 

\begin{proof}
The function $g(x) = \cot(\pi x)$ is a fixed point of the Birkhoff renormalization
$$ T(g)(y)=\frac{1}{n} \sum_{k=0}^{n-1} g(\frac{y}{n} + \frac{k}{n}) \; . $$
Therefore, $g' = -\pi \sin^{-2}(\pi x)$ or $\sin^{-2}(\pi x)$ is a fixed point of 
$$ T_2(g)(y)=\frac{1}{n^2} \sum_{k=0}^{n-1} g(\frac{y}{n} + \frac{k}{n}) \; . $$
Lets write this out as 
$$ \frac{1}{n^2} \sum_{k=1}^{n-1} g(\frac{y}{n} + \frac{k}{n})  = g(y) - 1/n^2 g(\frac{y}{n})  \; . $$
The right hand side is now analytic at $0$ and is $(1-1/n^2)/3 + A_2 x^2  + A_4 x^4 \dots $.
Therefore, the limit $y \to 0$ gives 
$$  \sum_{k=1}^{n-1} \frac{1}{n^2} \sin^{-2}(\pi k/n) = (1-\frac{1}{n^2})/3 \; . $$
The following argument just gives the limit $A$. Chose $n=3m$ and chose three sums for $y=0,y=1/3$ and $y=2/3$: 
\begin{eqnarray*}
   \frac{1}{m^2} [  g(\frac{1}{n}) + g(\frac{4}{n}) + \cdots + g( \frac{n-3}{n} ) ] &\to& A \\
   \frac{1}{m^2} [  g(\frac{2}{n}) + g(\frac{5}{n}) + \cdots + g( \frac{n-2}{n} ) ] &\to& g(1/3) = 4/3   \\
   \frac{1}{m^2} [  g(\frac{3}{n}) + g(\frac{6}{n}) + \cdots + g( \frac{n-1}{n} ) ] &\to& g(2/3) = 4/3   \;, 
\end{eqnarray*} 
where the limits are $n=3m \to \infty$. When adding this up we get $A 3^2$ on the left hand side and $A + 4/3 + 4/3$ on the
right hand side in the limit. From $A (3^2-1)  = 2 (4/3)$ we get $A=1/3$. 
\end{proof}

{\bf Example.} For $n=2$ we get $\zeta_9(2) = 20/3$. The scalar Laplacian which agrees with the
Laplacian on $1$-forms is displayed in the introduction. 
The nonzero eigenvalues are $3.87939, 3.87939, 3, 3, 1.6527, 1.6527, 0.467911, 0.467911$. 
The sum of the reciprocals $\sum_i \lambda_i^{-1}$ indeed is equal to $20/3$. This is the 
trace of the Green function. Note that since in the circle case, the scalar Laplacian $L_0$ and the one form Laplacian 
$L_1$ are the same since we have chosen only the positive part of the Dirac operator $D$ which 
produces $L=D^2 = L_0 \oplus L_1$, we
have $\zeta(2k) = {\rm Tr}(L^{-k})/2$, where ${\rm Tr}$ is the sum of all nonzero eigenvalues. 
We had to divide by $2$ because we only summed over the positive eigenvalues of $D$. \\

The formula $\zeta_n(2) = (n^2-1)/12$ is true for all $n$:

\begin{thm}[Discrete Basel problem]
For every $n \geq 1$, 
$$ \zeta_n(2) = (n^2-1) \frac{1}{12} \; . $$ 
It is a solution of all the arithmetic recursions
$$  A(m) + A(k) k^2 = A(m k) k^2 \; . $$
\end{thm}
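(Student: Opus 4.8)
The plan is to prove the closed form $\zeta_n(2) = (n^2-1)/12$ directly, and then verify that this function satisfies the stated arithmetic recursions as an afterthought. The cleanest route to the closed form goes through the Birkhoff fixed point property of $\cot(\pi x)$ exactly as in the previous lemma, but instead of taking the limit $y\to 0$ and reading off only the constant term, I would record the entire Taylor expansion near $y=0$. First I would invoke the identity from the proof of the previous lemma,
\begin{equation*}
\frac{1}{n^2}\sum_{k=1}^{n-1} \sin^{-2}\!\Bigl(\pi\tfrac{y}{n}+\pi\tfrac{k}{n}\Bigr) = \sin^{-2}(\pi y) - \frac{1}{n^2}\sin^{-2}\!\Bigl(\pi\tfrac{y}{n}\Bigr),
\end{equation*}
which comes from differentiating $\cot(\pi x)=Tg(x)$ and isolating the $k=0$ term. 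The right-hand side is analytic at $y=0$ because the singular parts $1/(\pi^2 y^2)$ of $\sin^{-2}(\pi y)$ and $(1/n^2)\cdot n^2/(\pi^2 y^2)$ of the subtracted term cancel exactly. Setting $y=0$ gives the value $\zeta_n(2)/n^2$ on the left (using $\lim_{y\to0}[\sin^{-2}(\pi y)-1/(\pi^2y^2)]=1/3$), so $\zeta_n(2)/n^2 = 1/3 - (1/n^2)\cdot 1/3 = (1-1/n^2)/3$, which is $(n^2-1)/(3n^2)$. Wait — this matches the Lemma's normalization $A(n)=\zeta_n(2)/n^2$, so multiplying through by $n^2$ yields $\zeta_n(2) = (n^2-1)/3$. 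That is off by a factor of $4$ from the stated $(n^2-1)/12$, and the discrepancy is precisely the $2^{-s}=2^{-2}=1/4$ in the definition $\zeta_n(s)=\sum_k (2\sin(\pi k/n))^{-s}$: the Lemma in the Basel-problem section drops the $2^{-2}$, whereas Theorem's $\zeta_n(2)$ includes it. So the honest statement is $\zeta_n(2) = \tfrac14\cdot (n^2-1)/3 = (n^2-1)/12$, and I would make that bookkeeping explicit.

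For the small-$n$ base cases I would just note they are forced: $n=1$ gives an empty sum, $0 = (1-1)/12$, consistent; and the formula for $n=1,2,3$ can be checked by hand against $\sin^{-2}$ values, which also serves as a sanity check on the factor of $4$. The main point, though, is that the analyticity-at-$0$ argument is valid for every $n\ge 1$ with no asymptotics needed, so the identity is exact, not merely a limit.

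For the recursion claim, I would argue as follows. Write $A(n)=\zeta_n(2)/n^2=(1-1/n^2)/3$ in the un-normalized (factor-$4$-dropped) convention used where the recursion is stated. The relation $A(m)+A(k)k^2 = A(mk)k^2$ follows by substituting $A(j)=(1-1/j^2)/3 = (j^2-1)/(3j^2)$: the left side is $(m^2-1)/(3m^2) + k^2\cdot(k^2-1)/(3k^2) = (m^2-1)/(3m^2) + (k^2-1)/3$, and the right side is $k^2\cdot((mk)^2-1)/(3m^2k^2) = ((mk)^2-1)/(3m^2)$; clearing $3m^2$, one needs $(m^2-1) + m^2(k^2-1) = m^2k^2-1$, i.e. $m^2-1+m^2k^2-m^2 = m^2k^2-1$, which is an identity. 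Conceptually this recursion is just the splitting of a Birkhoff sum of scale $mk$ into $k$ blocks each of scale $m$, evaluated at the $k$ base points $j/k$, $j=0,\dots,k-1$ — the $j=0$ block contributes $A(mk)\cdot$(something) via the self-similar term and the remaining blocks contribute the values $g(j/k)=\cot^{(1)}$-type constants — so I would add a one-sentence remark pointing out that the arithmetic recursion is the $n=3m$ trick of the previous proof carried out at general composite scale, and that $(n^2-1)/12$ is the unique solution with the correct value at, say, $n=2$.

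The main obstacle I anticipate is purely one of normalization hygiene: the excerpt uses two conventions for $\zeta_n(2)$ (with and without the $2^{-s}$ factor) within a few lines of each other, and the recursion as displayed is only dimensionally consistent in the factor-dropped convention. So the real work of the proof is not analytic — the fixed-point argument does everything — but rather making sure the constant is tracked correctly through the definition, and stating the recursion so that $(n^2-1)/12$ genuinely solves it (which it does, since the recursion is scale-invariant under the overall constant $1/12$ versus $1/3$). Once that is pinned down, every step above is a routine verification.
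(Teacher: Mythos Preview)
Your proposal is correct and follows essentially the same route as the paper. Both rely on the Birkhoff fixed-point identity for $\cot(\pi x)$ (differentiated once) from the preceding Lemma to obtain the exact value $\sum_{k=1}^{n-1}\sin^{-2}(\pi k/n)=(n^2-1)/3$ for every $n$, and both identify the recursion $A(m)+A(k)k^2=A(mk)k^2$ with the $m\to km$ block-splitting of the Birkhoff sum; the only cosmetic difference is that the paper \emph{derives} the recursion from that splitting and then observes the closed form solves it, whereas you obtain the closed form first and then \emph{verify} the recursion by substitution (while still noting its Birkhoff origin). Your explicit bookkeeping of the $2^{-2}$ factor that reconciles $(n^2-1)/3$ with $(n^2-1)/12$ is a genuine clarification the paper leaves implicit.
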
 
\begin{proof} 
The previous derivation for $m \to 3m$ can be generalized to $m \to k m$. 
It shows that $A(m) + A(k) k^2 = A(m k) k^2$, which is solved by $(1-1/n^2)/12$. 
\end{proof} 

{\bf Remarks.} \\
{\bf 1)} $B(n)= n^2 A$ satisfies the relation $B(m)/m^2 + B(k) = B(m k)/m^2$. 
$C(n)= n A$ satisfies the relation $C(m) + C(k) k m = C(m k) k$. \\
{\bf 2)} The answer to $\zeta_n(2) = {\rm tr}(L^{-1})$ 
is the variance of uniformly distributed random variables in $\{1, \dots ,n \; \}$:
$(1/n) \sum_{k=1}^n k^2 - ((1/n) \sum_{k=1}^n k)^2=(n^2-1)/12$. Is this an
accident? 

\begin{coro}[Classical Basel problem] 
$\zeta(2) = \sum_{k>0} \frac{1}{k^2} = \frac{\pi^2}{6}$. 
\end{coro}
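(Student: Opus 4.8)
The plan is to derive the classical Basel problem as a direct limiting consequence of the discrete Basel identity $\zeta_n(2) = (n^2-1)/12$, which has just been established. The starting observation is that $\zeta_n(2) = \sum_{k=1}^{n-1} (2\sin(\pi k/n))^{-2} = \tfrac14 \sum_{k=1}^{n-1} \sin^{-2}(\pi k/n)$, so the identity rearranges to $\sum_{k=1}^{n-1} \sin^{-2}(\pi k/n) = (n^2-1)/3$. The bridge to the Riemann zeta value is the elementary inequality $\sin(x) \le x$ (for $x \in [0,\pi/2]$) together with a matching lower bound such as $\sin(x) \ge (2/\pi) x$ on the same interval, or more simply $\sin(x) \ge x - x^3/6$. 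I would exploit the symmetry $\sin(\pi k/n) = \sin(\pi(n-k)/n)$ to restrict attention to $1 \le k \le n/2$ where the argument lies in $[0,\pi/2]$ and the bounds are clean.

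The key steps, in order: first, for a fixed positive integer $K$ and $n > 2K$, split the sum $\sum_{k=1}^{n-1}\sin^{-2}(\pi k/n)$ into the ``near'' terms $1 \le k \le K$ (and their mirror images $n-K \le k \le n-1$) and the remaining ``bulk'' terms. Second, for the near terms use $\sin(\pi k/n) \le \pi k/n$ to get $\sin^{-2}(\pi k/n) \ge (n/\pi)^2 k^{-2}$, and use the lower bound on $\sin$ to get an upper estimate $\sin^{-2}(\pi k/n) \le (n/\pi)^2 k^{-2}(1 + o(1))$ uniformly for $k \le K$ as $n \to \infty$; hence $(\pi^2/n^2)\sum_{k=1}^{K}\sin^{-2}(\pi k/n) \to \sum_{k=1}^{K} k^{-2}$. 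Third, bound the bulk contribution: for $K < k \le n/2$ one has $\sin(\pi k/n) \ge \sin(\pi K/n) \ge (2/\pi)(\pi K/n) = 2K/n$ is too crude, so instead use $\sin(\pi k/n)\ge (2/\pi)(\pi k/n) = 2k/n$ to get $\sin^{-2}(\pi k/n) \le n^2/(4k^2)$, whence $(\pi^2/n^2)\sum_{K<k\le n/2}\sin^{-2} \le (\pi^2/4)\sum_{k>K} k^{-2}$, which is small uniformly in $n$ once $K$ is large. Fourth, combine: from $\sum_{k=1}^{n-1}\sin^{-2}(\pi k/n) = (n^2-1)/3$ we get, multiplying by $\pi^2/n^2$, that $\pi^2/3 \cdot (1 - 1/n^2) = (\pi^2/n^2)\sum_{k=1}^{n-1}\sin^{-2}(\pi k/n)$, and the right side, by the three estimates above (doubling for the mirror symmetry, with a harmless adjustment for the middle term when $n$ is even), converges as $n\to\infty$ to $2\sum_{k=1}^{\infty} k^{-2} = 2\zeta(2)$. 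Therefore $2\zeta(2) = \pi^2/3$, i.e. $\zeta(2) = \pi^2/6$.

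A cleaner packaging of the same idea, which I would probably use to avoid fussing over the even/odd middle term, is to apply part (b)-style Riemann-sum reasoning only to the tails and a dominated-convergence argument to the whole: write $(\pi^2/n^2)\sin^{-2}(\pi k/n) = \phi_n(k)$ and note $\phi_n(k) \to k^{-2}$ pointwise in $k$ while $\phi_n(k) \le C/k^2$ (for $k \le n/2$, using $\sin(x)\ge 2x/\pi$) and symmetrically for $k > n/2$; then the discrete dominated convergence theorem gives $\sum_k \phi_n(k) \to 2\sum_k k^{-2}$, and comparing with $(\pi^2/n^2)(n^2-1)/3 \to \pi^2/3$ finishes it. Either way, the main — and really the only — obstacle is the uniform control of the terms near the singularities $k \approx 0$ and $k \approx n$: the summand blows up there, so one must show the near-singular terms contribute exactly the tail of $\sum k^{-2}$ and nothing anomalous. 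The elementary two-sided bound $2x/\pi \le \sin x \le x$ on $[0,\pi/2]$ handles this completely, so the proof is short once the discrete Basel theorem is in hand.
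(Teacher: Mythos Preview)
Your argument is correct, and at the conceptual level it is the same as the paper's: both plug the discrete Basel identity $\zeta_n(2)=(n^2-1)/12$ into a limit that relates $\zeta_n(2)/n^2$ to $\zeta(2)$. The paper simply invokes part~(a) of Theorem~\ref{maintheorem} (the convergence $\zeta_n(s)(2\pi)^s/(2n^s)\to\zeta(s)$ for $\sigma>1$) at $s=2$, whereas you re-prove that specific instance from scratch using the Jordan inequality $2x/\pi\le \sin x\le x$ on $[0,\pi/2]$ and a discrete dominated-convergence argument. Your route is more elementary and self-contained---it avoids the Newton--Cotes/$K$-derivative machinery the paper develops to prove the general convergence---while the paper's route is a one-line application once that theorem is in hand. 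Either way the substance is the same limit computation, and your handling of the only delicate point (uniform control of the near-singular terms via $\sin(\pi k/n)\ge 2k/n$) is clean and correct.
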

\begin{proof} 
By Theorem~(\ref{maintheorem}), we have
$\zeta(s) = \lim_{n \to \infty} \zeta_n(s) 2^s \pi^s/(2 n^s)$. For $s=2$, we get from 
the discrete Basel problem the answer $\lim_{n \to \infty} (1-1/n^2) \pi^2/6 = \pi^2/6$. 
\end{proof} 

Lets look at the next larger case and compute $\tr(L^{-4})$:  

\begin{lemma}
For every $n$,
$$ \sum_{k=1}^{n-1} \sin^{-4}(\frac{\pi k}{n}) = \frac{(n^2-1)(n^2+11)}{45} \; . $$
In the limit this gives $\frac{1}{45}$ so that
$\sum_{k>0} \frac{1}{k^4} = \frac{\pi^4}{90}$. 
\end{lemma}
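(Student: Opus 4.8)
The plan is to run the same argument as for the discrete Basel problem, using that $g(x)=\cot(\pi x)$ is a fixed point of the Birkhoff renormalization~(\ref{renormalization}), but differentiating one more time so that $\sin^{-4}(\pi x)$ appears. First I would record the derivatives of $g$. From $g'=-\pi(1+g^{2})$ one gets $g''=2\pi^{2}g(1+g^{2})$ and $g'''=-2\pi^{3}(1+g^{2})(1+3g^{2})$. Since $1+g^{2}=\sin^{-2}(\pi x)$ and $(1+g^{2})(1+3g^{2})=3(1+g^{2})^{2}-2(1+g^{2})$, this reads
\begin{equation*}
 g'''(x)=-2\pi^{3}\bigl(3\sin^{-4}(\pi x)-2\sin^{-2}(\pi x)\bigr),\qquad\text{i.e.}\qquad \sin^{-4}(\pi x)=\tfrac{2}{3}\sin^{-2}(\pi x)-\frac{g'''(x)}{6\pi^{3}}.
\end{equation*}
So the whole task reduces to computing $\sum_{k=1}^{n-1}g'''(k/n)$, because $\sum_{k=1}^{n-1}\sin^{-2}(\pi k/n)=(n^{2}-1)/3$ is the previous lemma.

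Next I would differentiate the fixed point identity $g(x)=\frac1n\sum_{k=0}^{n-1}g(\tfrac{x+k}{n})$ three times, which shows that $g'''$ is a fixed point of $h\mapsto \frac1{n^{4}}\sum_{k=0}^{n-1}h(\tfrac{x+k}{n})$; isolating the $k=0$ term gives
\begin{equation*}
 \frac{1}{n^{4}}\sum_{k=1}^{n-1}g'''\!\Bigl(\frac{x+k}{n}\Bigr)=g'''(x)-\frac{1}{n^{4}}\,g'''\!\Bigl(\frac{x}{n}\Bigr).
\end{equation*}
Then I would let $x\to0$ exactly as in the Basel proof. Writing $\cot(\pi x)=\frac1{\pi x}+\phi(x)$ with $\phi$ analytic and odd near $0$ — from $\cot z=z^{-1}-z/3-z^{3}/45-\cdots$ one has $\phi(x)=-\tfrac{\pi}{3}x-\tfrac{\pi^{3}}{45}x^{3}-\cdots$ — we get $g'''(x)=-\tfrac{6}{\pi x^{4}}+\phi'''(x)$, and the two $x^{-4}$ contributions on the right-hand side cancel identically. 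Hence that right-hand side is analytic at $0$ with value $\phi'''(0)(1-n^{-4})$, and $\phi'''(0)=3!\cdot(-\pi^{3}/45)=-2\pi^{3}/15$ (this is where the Bernoulli number $B_{4}=-1/30$ enters). Therefore $\sum_{k=1}^{n-1}g'''(k/n)=-\tfrac{2\pi^{3}}{15}(n^{4}-1)$, and substituting into the reduction above,
\begin{equation*}
 \sum_{k=1}^{n-1}\sin^{-4}\!\Bigl(\frac{\pi k}{n}\Bigr)=\frac{2}{3}\cdot\frac{n^{2}-1}{3}+\frac{1}{6\pi^{3}}\cdot\frac{2\pi^{3}}{15}(n^{4}-1)=\frac{2(n^{2}-1)}{9}+\frac{n^{4}-1}{45}=\frac{n^{4}+10n^{2}-11}{45}=\frac{(n^{2}-1)(n^{2}+11)}{45}.
\end{equation*}

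For the limiting statement, dividing by $n^{4}$ gives $n^{-4}\sum_{k=1}^{n-1}\sin^{-4}(\pi k/n)\to 1/45$, equivalently $\zeta_{n}(4)/n^{4}=2^{-4}(n^{2}-1)(n^{2}+11)/(45n^{4})\to 1/720$. Plugging this into Theorem~\ref{maintheorem}(a) in the normalized form used in the proof of the Classical Basel problem corollary, i.e. $\zeta(s)=\lim_{n}\zeta_{n}(s)\,2^{s}\pi^{s}/(2n^{s})$, gives $\zeta(4)=\lim_{n}(n^{2}-1)(n^{2}+11)\,\pi^{4}/(2\cdot45\,n^{4})=\pi^{4}/90$.

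The computation is completely parallel to the Basel case, so I do not expect a genuine obstacle: the only delicate points are the trigonometric reduction of $g'''$ to a combination of $\sin^{-4}$ and $\sin^{-2}$, and correctly reading off the constant term $\phi'''(0)=-2\pi^{3}/15$ from the cotangent Laurent series; all the real analysis is already packaged in the cited fixed point property of $\cot(\pi x)$ and in Theorem~\ref{maintheorem}. The same scheme, differentiating $\cot(\pi x)$ a total of $2k-1$ times, evaluates every $\zeta_{n}(2k)$ and recovers $\zeta(2k)$.
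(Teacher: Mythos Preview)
Your argument is correct and follows essentially the same route as the paper's part~(a): both use that $g'''$ is a fixed point of the $n^{4}$-normalized Birkhoff operator and combine this with the already known $\sin^{-2}$ sum, your identity $g'''=-2\pi^{3}(3\sin^{-4}-2\sin^{-2})$ being just the rewriting of the paper's $g'''=-2\pi^{3}(2+\cos 2\pi x)/\sin^{4}\pi x$. Your write-up is in fact more explicit than the paper's sketch (you carry out the Laurent-series evaluation $\phi'''(0)=-2\pi^{3}/15$ and the final algebra); the only thing the paper adds that you don't is an alternative shortcut in its part~(b), which bypasses the exact formula and gets the limiting value $1/45$ directly from the single equation $A+h(1/3)+h(2/3)=3^{4}A$.
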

\begin{proof}
a) We can get an explicit formula for the sum $\sum_{k=1}^{n-1} (2+\cos(2\pi k/n))/\sin^4(\pi k/n)$
by renormalization. We can also get an explicit formula for 
$$ \sum_{k=1}^{n-1} (2-2\cos(2\pi \frac{k}{n}))/\sin^4(\pi \frac{k}{n}) = \sum_{k=1}^{n-1} \sin^{-2}(\pi \frac{k}{n})  $$
which allows to get one for $\sum_{k=1}^{n-1} 1/\sin^4(\pi k/n)$.  \\
While b) follows from a), there is a simpler direct derivation of b) which works in the case when we 
are only interested in the limit. 
With have $g'(x) = -\pi\sin^{-2}(\pi x)$, then $g'''(x) = -\pi^3 2(2+\cos(2\pi x))/\sin^4(\pi x)$ 
which has the same singularity at $x=0$ than $-\pi^3 6 \sin^{-4}(x)$. 
Now $h=g'''(x)$ is a fixed point of the Birkhoff renormalization operator
$$ T_4(g)(y)=\frac{1}{n^4} \sum_{k=0}^{n-1} g(\frac{y}{n} + \frac{k}{n}) \; . $$
Lets call the limit $A$. As before, we see that the limit $A$ satisfies 
$A+h(1/3)+h(2/3) = A 3^4$. Since $h(1/3)=h(2/3)=\pi^3 16/3$. We have $A=\pi^3 (32/3)/(3^4-1) = \pi^3 2/15$. 
If we do the summation with $\sin^{-4}(x)$ we get $(1/6 \pi^3)$ that result which is $1/45$.  \\
\end{proof}

\section{Proof of the limiting root result}

In this section we prove the statements in Theorem~(\ref{maintheorem}). 
We will use that if a sequence $f_n$ of analytic functions converges to $f$ uniformly 
in an open region $G$ and $K$ is a subset of $G$ and all $f_n$ have a root in $K$. 
Then $f$ has a root in $K$. This is known under the name Hurwitz theorem in complex
analysis (e.g. \cite{Conway1978} and follows from a contour integral argument. \\

{\bf a) Below the critical strip.}
The first part deals with non-positive values of $\sigma$, where $\zeta$ is a Riemann sum.

\begin{propo}
For large enough $n$, there are no roots of $\zeta_n(s)$ in $\sigma \leq 0$. 
\end{propo}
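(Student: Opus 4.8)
The plan is to use part b) of Theorem~\ref{maintheorem}, which says that for $\sigma = \text{Re}(s) \leq 0$ we have $\zeta_n(s)/n \to c(s)$ with $c(s) \neq 0$, together with Hurwitz's theorem and a compactness-at-infinity argument. First I would fix attention on the closed half-plane $\sigma \leq 0$ and split it into a large compact piece and an unbounded tail. On the compact piece, say $K = \{\sigma \leq 0\} \cap \{|s| \leq R\}$, the functions $g_n(s) := \zeta_n(s)/n$ converge uniformly to $c(s)$, and $c(s)$ has no zeros there (using the explicit Gamma-function formula $c(s) = 2^{-s}\Gamma((1-s)/2)/(\sqrt{\pi}\,\Gamma(1-s/2))$, which is zero-free and finite for $\sigma \leq 0$; the poles of $\Gamma((1-s)/2)$ occur only at $s = 1, 3, 5, \dots$). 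Since $\min_{s \in K} |c(s)| = \delta > 0$, uniform convergence gives an $n_0$ such that $|g_n(s)| > \delta/2$ on $K$ for all $n > n_0$; hence $\zeta_n$ has no root in $K$ for $n > n_0$.

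The remaining issue is the unbounded part $\{\sigma \leq 0\} \setminus K$, i.e.\ $|s|$ large with $\sigma \leq 0$. Here I would argue directly from the defining sum
\begin{equation*}
 \zeta_n(s) = \sum_{k=1}^{n-1} \frac{1}{2^s \sin^s(\pi k/n)} \, .
\end{equation*}
For $\sigma \leq 0$ each term $(2\sin(\pi k/n))^{-s}$ has modulus $(2\sin(\pi k/n))^{-\sigma}$, and since the arguments $2\sin(\pi k/n)$ lie in $(0,2]$, one expects the dominant contributions to come from $k$ near $n/2$ where $2\sin(\pi k/n)$ is close to $2$. A clean way to see non-vanishing for large $|s|$: factor out the two middle terms (or, for $n$ even, the single term $k = n/2$ where $2\sin(\pi k/n) = 2$) and show the ratio of the rest to this term tends to $0$ as $\sigma \to -\infty$, uniformly in $\tau = \text{Im}(s)$, so $\zeta_n(s) \neq 0$ once $\sigma \leq -M_n$. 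Combined with the compact-set argument covering $-M_n \leq \sigma \leq 0$, this finishes the proposition — though one must be slightly careful that the threshold $M_n$ and $n_0$ are chosen compatibly; taking $R$ large enough that $K$ already contains the strip $-M \leq \sigma \leq 0$ for a fixed $M$ independent of $n$ would make this seamless.

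The main obstacle I anticipate is the control of the tail $\sigma \to -\infty$ with $|\tau|$ possibly also large: the sum has $n-1$ terms and one needs the single largest term to dominate the sum of the other $n-2$, which requires a quantitative lower bound on the gap between $2\sin(\pi \lfloor n/2\rfloor/n) = \max_k 2\sin(\pi k/n)$ and the next-largest value $2\sin(\pi(\lfloor n/2\rfloor - 1)/n)$; this gap is of order $1/n^2$, so for fixed $n$ one does get geometric domination as $\sigma \to -\infty$, but the threshold degrades with $n$. The cleanest route is therefore to \emph{not} chase a uniform-in-$n$ tail bound but instead to note that part b) of Theorem~\ref{maintheorem} already asserts uniform convergence on compact subsets of $\{\sigma \leq 0\}$ — one simply needs the family $\{g_n\}$ to be normal on the \emph{closed} half-plane, which follows because, for $\sigma \leq -\epsilon < 0$ and $n$ fixed, $|\zeta_n(s)| \leq (n-1) 2^{-\sigma} \min_k \sin^{-\sigma}(\pi k/n)$ grows only like $e^{O(|s|)}$, so there is no escape of roots to infinity. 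Assembling: a root-free compact core from Hurwitz plus a root-free tail from term-domination yields the stated conclusion.
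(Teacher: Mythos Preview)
Your compact-set argument --- Riemann-sum convergence of $\zeta_n(s)/n$ to the nonvanishing limit $c(s)$, followed by Hurwitz --- is exactly the paper's entire proof; the paper writes nothing beyond those three sentences. In particular the paper does not treat the unbounded tail of $\{\sigma\le 0\}$ at all, and its stated corollary is phrased only for compact $K$, so the proposition is evidently intended (and only proved) in that local sense despite its global wording.

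Your further discussion of the tail therefore goes beyond the paper. You are right that Hurwitz alone says nothing on an unbounded region, and your instinct to isolate the term with the largest base $2\sin(\pi k/n)$ as $\sigma\to -\infty$ is sound for each fixed $n$. But the step where you hope for ``a fixed $M$ independent of $n$'' does not go through with the argument you sketch: the gap between the largest base (equal to $2$ for $n$ even, $2\cos(\pi/(2n))$ for $n$ odd) and the next one is of order $1/n^{2}$, so the crude domination bound $(n-2)(\cos(\pi/n))^{-\sigma}<1$ forces a threshold $M_n$ of order $n^{2}\log n$, not a uniform constant. Likewise, your closing appeal to normality of $\{g_n\}$ on the closed half-plane does not prevent individual $\zeta_n$ from having zeros far out: normality constrains accumulation of zeros of the \emph{limit}, not the location of zeros of each member of the sequence. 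So a genuinely global version of the proposition is not established by either the paper's argument or yours --- but, to repeat, the paper only uses and only proves the compact-set version, and for that your first paragraph already matches it.
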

\begin{proof}
Below the critical strip $\sigma \leq 0$, the sum $\zeta_n(s)/n$ is a Riemann sum $\sum_{k=1}^{n-1} g(k/n)$
for a continuous, complex valued function $g(x) = \sin^{-s}(x)$ on $[0,1]$. 
The function $c(s)$ has no roots in $\sigma \leq 0$. By Hurwitz theorem, also $\zeta_n$ has
no roots there for large enough $n$. 
\end{proof} 

There are still surprises. We could show for example that $[\zeta_n(s)-n c(s)]/n^s$ even converges
even for $-1<s<0$. This shows that the Riemann sum $\zeta_n(s)/n$ is very close to the average $c(s)$
below the critical strip. We do not need this result however. \\

{\bf To the right of the critical strip} \\
There is a relation between the zeta function of the circular
graphs $C_n$ and the zeta function of the circle $T^1$ if $s$ is to the right of the
critical line $\sigma=1$. Establishing this relation requires a bit of analysis.  \\

By coincidence that the name {\bf ``Riemann"} is involved 
both to the left of the critical strip with Riemann sum and to the right with 
the Riemann zeta function. The story to the right of the critical line is not
as obvious as it was in the left half plane $\sigma<0$. 

\begin{propo}[Discrete and continuum Riemann zeta funtion]  
If $\zeta_n(s)$ is the zeta function of the circular graph $C_n$ and $\zeta(s)$ is
the zeta function of the circle (which is the classical Riemann zeta function), then
$$   \zeta_n(s) \frac{2 (2\pi)^s}{n^s} \to \zeta(s) $$
for $n \to \infty$. The convergence is uniform on compact sets $K \subset \{ \sigma>1 \; \}$
and $\zeta_n$ has no roots in $K$ for large enough $n$. 
\end{propo}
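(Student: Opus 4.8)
The plan is to isolate the two ``singular ends'' of the finite sum $\zeta_n(s)$ and match each of them, after rescaling, against the Dirichlet series of $\zeta(s)$. First I would fold the sum onto its left half using the reflection $\sin(\pi k/n)=\sin(\pi(n-k)/n)$, so that
$$ \zeta_n(s) = 2\sum_{k=1}^{K_n}\bigl(2\sin(\pi k/n)\bigr)^{-s} + \delta_n , \qquad K_n=\lceil n/2\rceil-1 , $$
where $\delta_n$ is a single harmless term ($2^{-s}$ if $n$ is even, $0$ if $n$ is odd). Next I would factor $2\sin(\pi x)=2\pi x\cdot{\rm sinc}(\pi x)$ with ${\rm sinc}(t)=\sin t/t$, giving
$$ \bigl(2\sin(\pi k/n)\bigr)^{-s} = \Bigl(\frac{n}{2\pi}\Bigr)^{s} k^{-s}\,{\rm sinc}(\pi k/n)^{-s} . $$
The purpose of this rewriting is that for $1\le k\le K_n$ the argument $\pi k/n$ stays in $(0,\pi/2]$, where ${\rm sinc}$ is positive and bounded below by $2/\pi$; hence ${\rm sinc}(\pi k/n)^{-s}$ is an ordinary positive real raised to a complex power, bounded uniformly for $s$ in a compact subset of $\{{\rm Re}(s)>1\}$, and the prefactor $(n/2\pi)^{s}$ is exactly the scale that the normalization in the statement strips away (the constant there being dictated by the linearisation $2\sin(\pi x)\sim 2\pi x$ at each end of the range).

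With this reduction, the proposition amounts to showing that $\sum_{k=1}^{K_n} k^{-s}\,{\rm sinc}(\pi k/n)^{-s}\to\zeta(s)$ uniformly on compact subsets of $\{{\rm Re}(s)>1\}$. I would split off $\sum_{k=1}^{K_n}k^{-s}$, which is a partial sum of $\zeta(s)$ and tends to it uniformly there since ${\rm Re}(s)>1$, and then control the remainder $\sum_{k=1}^{K_n}k^{-s}\bigl({\rm sinc}(\pi k/n)^{-s}-1\bigr)$ by the elementary estimate $|{\rm sinc}(\pi k/n)^{-s}-1|\le C(s)(k/n)^2$, valid for \emph{all} $k\le K_n$ and coming from $|{\rm sinc}(t)-1|\le t^2/6$ on $[0,\pi/2]$, the lower bound ${\rm sinc}\ge 2/\pi$ there, and $|e^{z}-1|\le|z|e^{|z|}$, with $C(s)$ bounded on compact $s$-sets. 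The remainder is then at most $C(s)\,n^{-2}\sum_{k\le K_n}k^{2-{\rm Re}(s)}$, and comparing $\sum_{k\le n}k^{2-\sigma}$ with $\int_0^n x^{2-\sigma}\,dx$ shows this is $O(n^{1-\sigma})$ (with a logarithmic correction when $\sigma=3$, and $O(n^{-2})$ when $\sigma>3$), hence $\to 0$ for every $\sigma>1$, uniformly on compacts. The term $\delta_n$, after normalization, is $O(n^{-{\rm Re}(s)})$ and also disappears. This yields the stated convergence, uniformly on compact subsets of $\{{\rm Re}(s)>1\}$.

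For the ``no roots'' clause I would use that the limit $\zeta$ is zero-free on $\{{\rm Re}(s)>1\}$ by the Euler product, so on a compact $K$ there it is bounded below in modulus; uniform convergence then forces the normalized $\zeta_n$ to be bounded below on $K$ for all large $n$, and since the normalizing prefactor itself has no zeros, neither does $\zeta_n$ on $K$ — this is just Hurwitz's theorem as recalled above. I do not expect a genuine obstacle here: this ``right of the critical line'' regime is the soft part of the theorem, the delicate analysis being reserved for the critical strip. The one step that needs a moment's attention is that the Taylor estimate on ${\rm sinc}^{-s}-1$ has to be applied on the full summation range $1\le k\le K_n$, not merely for $k$ small compared with $n$; it remains uniformly valid because ${\rm sinc}$ never comes near its first zero on $(0,\pi/2]$, and the $(k/n)^2$ gain is precisely what keeps the remainder sum summable against $n^{-2}$.
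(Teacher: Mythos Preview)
Your argument is correct. Both you and the paper begin by folding the sum onto $1\le k\le n/2$ via the reflection $\sin(\pi k/n)=\sin(\pi(n-k)/n)$ and then compare the folded, rescaled sum with the partial Dirichlet sum $\sum_{k\le n/2}k^{-s}$. The paper, however, packages the comparison differently: it sets $h(x)=2\cdot 2^{-s}\sin^{-s}(\pi x)-2^{-s}(\pi x)^{-s}$, writes the error as $n^{1-s}\cdot\frac{1}{n}\sum_k h(k/n)$, and argues that the normalised Riemann sum $\frac{1}{n}\sum_k h(k/n)$ stays bounded by invoking the Rolle--Newton--Cotes estimate with the $K$-derivative developed in the appendix (this machinery is genuinely needed there once $\sigma\ge 2$, since $h$ is then still singular at $x=0$). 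Your route is more elementary and entirely self-contained: the factorisation $2\sin(\pi x)=2\pi x\cdot{\rm sinc}(\pi x)$ together with the uniform bound $|{\rm sinc}(t)^{-s}-1|\le C(s)\,t^{2}$ on $(0,\pi/2]$ gives the $O(n^{1-\sigma})$ remainder directly, with the case split on $\sum_{k\le n/2}k^{2-\sigma}$ handling all $\sigma>1$ at once. The trade-off is only one of economy of presentation: the paper's $K$-derivative framework is the same tool that carries the harder estimates inside the critical strip, so its appearance here is uniformity of method rather than necessity; your direct estimate is cleaner for this region in isolation. The Hurwitz step for the ``no roots'' clause is identical in both approaches.
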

\begin{proof}
We have to show that the sum 
$$ \sum_{k=1}^{n-1} [ 2^{-s} \sin^{-s}(\pi \frac{k}{n}) \frac{(2\pi)^s}{n^s} - \frac{1}{k^s} ] $$
goes to zero. Instead, because the Riemann sums $\sum_{k=1}^{n-1} g(k/n)$ of $g(x)=1/x$ and $g(x) = 1/(1-x)$ give the same result, 
we can by symmetry take twice the discrete Riemann sum and look at the Riemann sum 
$$  \frac{S_n}{n^s} $$ with $S_n = \sum_{k=1}^{n-1} h(k/n)$ and 
$$ h(x) = 2 2^{-s} \sin^{-s}(\pi x) - 2^{-s} (\pi x)^{-s} \; . $$
We have to show that $S_n/n^s$ converges to zero.  This is equivalent to show
that $n^{-(s-1)} (S_n/n)$ converges to zero. Now, 
$$ \frac{S_n}{n} = H(1-\frac{1}{2n})-H(\frac{1}{2n}) + \frac{1}{n} \sum_k K(\frac{k}{n})  \; , $$
where $H(x)$ is the anti-derivative of $h$ and where $K(x)$ is a $K$-derivative of $h$. 
Since $\zeta$ has no roots in $K$, we know by Hurwitz also that $\zeta_n$ has no roots for large enough $n$. 
\end{proof}

{\bf On the critical line} \\
On the critical line, we have a sum 
$$  \zeta(i \tau) = \sum_{k=1}^{n-1} \frac{1}{\sin(\pi \frac{k}{n})} e^{-i \tau \log(\sin(\pi \frac{k}{n})} \; . $$
Now $\sin^{-1}(\pi x)$ is no more in $L^1$ but we have 
$\int_{1/n}^{(n-1)/n} \sin^{-1}(x) \; dx \sim c_n = -2 \log(n) + C_n$.
This shows that $(\zeta_n - C_n)/(n \log(n))$ plays now the same role than $(\zeta_n(s) - c(s))/n^s$ before. 
This only establishes that $\zeta(s)/(n \log(s))$ stays bounded but does not establish convergence.
We measure experimentally that $\zeta(i \tau)/(\log(n) n)$ could converge to a bounded function which 
has a maximum at $0$. Indeed, we can see that the maximum occurs at $\tau=0$ and that
the growth is logarithmic in $n$. 
We do not know whether there are roots of  $\zeta_n(s)$ accumulating for $n \to \infty$ on $\sigma=1$.  \\

{\bf Inside the critical strip} \\
The following result uses Rolle theory developed in the appendix:

\begin{thm}[Central limit in critical strip]
For $0 < \sigma={\rm Re}(s) < 1$, the limit 
$$ \chi(s) = \lim_{n \to \infty} \frac{\zeta_n(s) - n c(s)}{n^s} $$
exists. There are no roots in the open critical strip.
\end{thm}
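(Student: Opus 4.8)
The plan is to write $\zeta_n(s) - n c(s)$ as a Riemann sum for the function $h(x) = 2^{-s}\sin^{-s}(\pi x) - c(s)$ on the circle $\R/\Z$, so that $h$ has mean zero and the only obstruction to convergence is the singularity of $h$ at $x=0$ of type $x^{-s}$ (and at $x=1$). Since the sampling points $k/n$ stay at distance at least $1/n$ from the singularity, I expect $\sum_{k=1}^{n-1} h(k/n)$ to grow like $n^s$, which is exactly the normalization $\sigma(\zeta_n) \asymp n^s$ appearing in the statement. Concretely, I would split the sum into a ``bulk'' part, say $k/n \in [\delta, 1-\delta]$, where $h$ is smooth and the Euler--Maclaurin / Rolle correction is $O(1/n)$ and hence negligible after dividing by $n^s$, and two ``boundary'' parts near $0$ and $1$. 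On the boundary part the leading behavior comes from replacing $\sin(\pi x)$ by $\pi x$, so the sum near $0$ is $\sum_{1\le k \le \delta n} 2^{-s}(\pi k/n)^{-s}\bigl(1+O((k/n)^2)\bigr)$; dividing by $n^s$ turns this into $(2\pi)^{-s}\sum_{k\ge 1} k^{-s}$ truncated, i.e. a partial $\zeta(s)$-type series, plus a convergent correction. Assembling the two endpoints and the $-c(s)$ mean-term, the limit $\chi(s)$ exists as an explicit combination of $\zeta(s)$ and the tail integral $\int_\delta^{1/2}(2^{-s}\sin^{-s}(\pi x)-c(s))\,dx$, and is visibly finite for $0<\sigma<1$, $s\neq 1$.

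The key tool for making the ``bulk is negligible'' step rigorous is the Newton--Coates--Rolle estimate from the appendix: for a function $f$ that is $C^3$ on the interior of an interval with $|K_f| \le M$, the trapezoidal/midpoint Riemann sum differs from the integral by a controlled amount involving the endpoint values of $f$ and $f'$ and the $L^1$ norm of a $K$-derivative term, rather than the $L^\infty$ norm of $f''$. Since $K_f$ is bounded for $f(x)=\sin^{-s}(\pi x)$ precisely when $s\neq 1$ (as noted in the introduction), this lets me control the error on every subinterval $[j/n,(j+1)/n]$ away from $0$ and sum the contributions to get $o(n^s)$. I would run this on the bulk region $[\delta,1-\delta]$ for fixed small $\delta$, then send $\delta\to 0$ at the end after the $n\to\infty$ limit has been taken, using dominated convergence on the resulting integral since $\sin^{-s}(\pi x)\in L^1$ in the strip.

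For the ``no roots in the open critical strip'' assertion I would argue as follows. Once $\chi(s)$ is identified, it suffices to check $\chi(s)\neq 0$ for $0<\sigma<1$; then Hurwitz's theorem (invoked exactly as in the other parts of Theorem~\ref{maintheorem}) forces $\zeta_n$ to be root-free on compact subsets for large $n$. The expression for $\chi(s)$ should be of the form (constant)$\cdot\zeta(s)\cdot(2\pi)^{-s}$ plus a term that is analytic and, one hopes, never exactly cancels it; more robustly, I expect $\chi(s)$ to agree up to an elementary nonvanishing factor with a Hurwitz-type combination whose nonvanishing in the strip can be read off directly, or one can appeal to the fixed-point/renormalization identity for $\zeta(s,x)$ mentioned in the summary. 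I anticipate that the main obstacle is precisely the bookkeeping that shows $\chi(s)$ is not identically the zero function and has no zeros in $0<\sigma<1$: the clean $\zeta(s)$-domination that works for $\sigma>1$ degrades here, and one may need a separate argument near the real segment $0<s<1$ (where signs are controllable) combined with an almost-periodicity / growth estimate in $\tau$ to rule out zeros with large imaginary part. The analytic convergence machinery itself, by contrast, I expect to be routine given the $K$-derivative bound.
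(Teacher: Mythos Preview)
Your convergence argument takes a different route from the paper's. You split into a bulk region $[\delta,1-\delta]$ (handled by ordinary Euler--Maclaurin) and boundary layers near $0,1$ (handled by replacing $\sin(\pi x)$ with $\pi x$ and recognizing an Euler--Maclaurin remainder $\sum_{k\le N}k^{-s}-N^{1-s}/(1-s)\to\zeta(s)$). The paper instead applies the Rolle--Newton--Coates estimate from the appendix \emph{globally} on $(0,1)$: since $K_g$ is bounded on the whole open interval for $g(x)=\sin^{-s}(\pi x)$, the appendix theorem bounds the full Riemann-sum error by $n^{-2}\bigl[g'\bigl(\tfrac{n-1/2}{n}\bigr)-g'\bigl(\tfrac{1}{2n}\bigr)\bigr]=O(n^{s-1})$, and dividing by $n^s$ gives convergence at once, with no $\delta$ and no second limit. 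Your decomposition would also work and has the advantage of identifying $\chi(s)$ explicitly (essentially $2(2\pi)^{-s}\zeta(s)$ plus a smooth remainder), but it is longer; the paper does not attempt to write $\chi$ in closed form.

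There is a genuine gap in your no-roots paragraph. You say it suffices to show $\chi(s)\neq 0$ and then invoke Hurwitz to conclude $\zeta_n$ is root-free. But Hurwitz applied to $(\zeta_n - n c)/n^s \to \chi$ would only tell you that $\zeta_n(s)-n c(s)$ is eventually nonzero, which says nothing about zeros of $\zeta_n$ itself. You have thus set yourself a hard auxiliary problem --- nonvanishing of $\chi$ in the strip, which you concede you cannot settle --- that is in fact irrelevant to the conclusion. The correct and much easier observation is: once $(\zeta_n(s)-n c(s))/n^s$ is known to converge to something finite, and since $n^{s}/n\to 0$ for $\sigma<1$, one has $\zeta_n(s)/n\to c(s)$ uniformly on compact subsets of the strip. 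Because $c(s)=2^{-s}\Gamma((1-s)/2)/(\sqrt{\pi}\,\Gamma(1-s/2))$ has no zeros there (the Gamma function is never zero and has no poles in that region), Hurwitz applied to $\zeta_n/n\to c$ gives the no-roots statement immediately. The nonvanishing of $\chi$ plays no role.
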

\begin{proof}
On the interval $[1/n,(n-1)/n]$, the function $\sin(\pi x)/n^s$ takes values between
$[-1,1]$ and subtracting $n c(s)/n^s$ gives a function $h$ on $[0,1]$ 
which averages to $0$. We have now a smooth bounded function $h$ of zero average
and have to show that $\sum_{k=1}^{n-1} h(k/n)$ converges. Let $H$ be the antiderivative,
Now use the mean value theorem to have $\sum_{k=1}^{n-1} H'(y_k) =0$, where $y_k$ are the 
Rolle points satisfying $H'(y_k) = h((k+1)/n) - h(k/n)$.
deviation of the Rolle point from the mean. We will see that 
$$ \frac{\zeta_n(s) - n c(s)}{n^s} = \frac{1}{n^{2+s}} \sum_{k=1}^{n-1} K(\frac{k}{n}) \sin^{-2-s}(\pi \frac{k}{n}) \; . $$
We know that $[\zeta_n(z_n)-n c(z_n)]/n^s$ converges for $n \to \infty$ uniformly for $s$
on compact subsets of the strip $0<\sigma<1$. Hurwitz assures that there are no roots in the limit. 
\end{proof}

{\rm Remarks.} \\
{\bf 1)} We actually see convergence for $-2 < \sigma < 1$. \\
{\bf 2)} The above proposition
does not implie that $\zeta_n(s)$ has no roots outside the critical strip for large enough $n$. 
Indeed, we believe that there are for all $n$ getting closer and closer to the critical line. \\
{\bf 3)} Because of the almost periodic nature of the setup we believe that for every $\epsilon>0$
there is a $n_0$ such that for $n>n_0$, there are no roots in $\sigma \in [-\epsilon,1+\epsilon]$. 

\begin{figure}
\scalebox{0.03}{\includegraphics{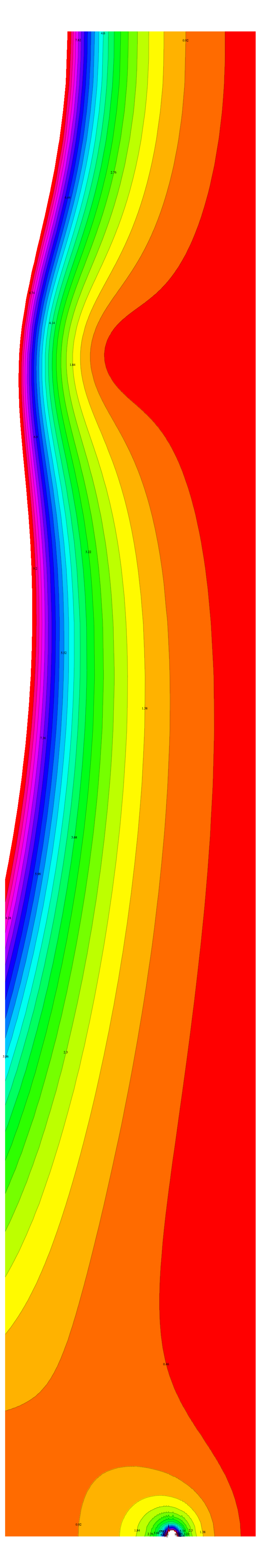}}
\caption{
The limiting function  $\chi(s)$ shown in the complex plane on $-1<\sigma<2$ and $0<\tau<18$
as a contour plot of $|\chi(s)|$. 
}
\label{situation}
\end{figure}

\section{Questions}

{\bf A)} This note has been written with the goal to understand the roots of $\zeta_n$  and
to explain their motion as $n$ increases. We still do not know:

\parbox{12cm}{ A) What is the limiting set the roots approach to on the critical line?  }

We think that it the limiting accumulation set could coincide with the entire critical line. If this is not true, where are the gaps?  \\

{\bf B)} The zeta functions considered here do not belong to classes of Dirichlet series 
which are classically considered. The following question is motivated from the classical symmetry for 
$\xi(s) = \pi^{-s/2} s(s-1) \Gamma(s/2) \zeta(s)$ which 
satisfies $\xi(1-s)=\xi(s)$ and where $S(s)=1-s$ is the involution:

\parbox{12cm}{B) Is there for every $n$ a functional equation for $\zeta_n$. }

We think the answer is yes in the limit. A functional equation which 
holds for each $n$ would be surprising and very interesting. \\

{\bf C)} The shifted zeta function
$$ \zeta_n(y,s) = \sum_{k=1}^{n-1} (2 \sin(\pi (k+y)/n))^{-s} \; . $$
is different from the ``twisted zeta function' 
$\zeta(s) = \sum_{\lambda} (\lambda+a)^{-s}$
which motivated by the Hurwitz zeta function (\cite{Dowker} for some integer values of $s$).
The function 
$$  z_n(s,y) = \zeta_n(s,0) + \zeta_n(s,y)  $$ 
has the property that $z_{n}(s,0) = 2\zeta_n(s), z_n(s,1/2) = \zeta_{2n}(s) + (2 \sin(\pi/n))^{-s}$. 
We can now define a vector field 
$$  F_n(s)=-\partial_s z_n(s,y)/\partial_h z_n(s,y) |_{y=0} $$
which depends on $n$ and has the property that the level set $s=c$ moves with $y$. This applies especially
to roots. If we start with a root of $\zeta_n$ at $y=0$, we end up with a root of $\zeta_{2n}$ 
at $y=1/2$. This picture explains why the motion of the roots has a natural unit time-scale
between $n$ and $2n$ but it also opens the possibility that we get a limiting field: 

\parbox{12cm}{C) Do the fields $F_n$ converge to a limiting vector field $F$? }

We expect a limiting vector field $F$ to exist in the critical strip and that $F$ is 
not zero almost everywhere on the critical line. This would show that there are no
gaps in the limiting root set. 

\section{Tracking roots} 

We report now some values of the root closest to the origin. These computations were done in 
the summer of 2013 and illustrate how slowly the roots move when $n$ is increased.
The results support the picture that the distance between neighboring roots will go to zero for 
$n \to \infty$ even so this is hard to believe when naively observing the numerics at first. For integers 
like $n=10'000$, we still see a set which appears indistinguishable from say $n=10'100$, giving the 
impression that the roots have settled. However, the roots still move, even so very slowly:
even when observing on a logarithmic time scale, the motion will slow down exponentially. Their speed
therefore appears to be asymptotic to $A e^{-C e^t}$ for some constants $A,B$ which makes their
motion hard to see. For the following data, we have computed the first
root with $400$ digit accuracy and stopped in each case the Newton iteration when 
$|\zeta_n(z)|<10^{-100}$. 

\begin{center}
\begin{tabular}{ll}
n         &   root $r(n)$     \\ \hline
$ 5000$   &   $z = 1.0147497138 + 0.7377785810 i$  \\
$10000$   &   $z = 1.0120939324 + 0.6811471384 i$  \\
$20000$   &   $z = 1.0100259045 + 0.6327440122 i$  \\
$40000$   &   $z = 1.0083949013 + 0.5908710693 i$  \\
$80000$   &   $z = 1.0070933362 + 0.5542725435 i$  \\
$160000$  &   $z = 1.0060433506 + 0.5219986605 i$  \\
$320000$  &   $z = 1.0051878321 + 0.4933170571 i$  \\
$640000$  &   $z = 1.0044843401 + 0.4676534598 i$  \\
$1280000$ &   $z = 1.0039009483 + 0.4445508922 i$  \\
$2560000$ &   $z = 1.0034133623 + 0.4236409658 i$  \\
$5120000$ &   $z = 1.0030028952 + 0.4046232575 i$  \\
$10240000$&   $z = 1.0026550285 + 0.3872502227 i$  \\
$20480000$&   $z = 1.0023583755 + 0.3713159790 i$  \\ \hline
\end{tabular}
\end{center}

If $r(n) \in C$ denotes  the first root of $\zeta_n(s)$ , then linear interpolation of the ${\rm Re}(r(n))$ data
indicate that for $n=10^{10}$ only, we can expect the root to be in the strip. This
estimate is optimistic because the root motion will slow down. It might 
well be that we need to go to much higher values to reach the critical strip, or 
- which is very likely by our result - that the critical strip
is only reached asymptotically for $n \to \infty$. A reason for an even slower convergence
is also that $\zeta(s)$ has a pole at $s=1$, which could be the final landing point of the 
first root we have tracked. 

\begin{figure}
\scalebox{0.13}{\includegraphics{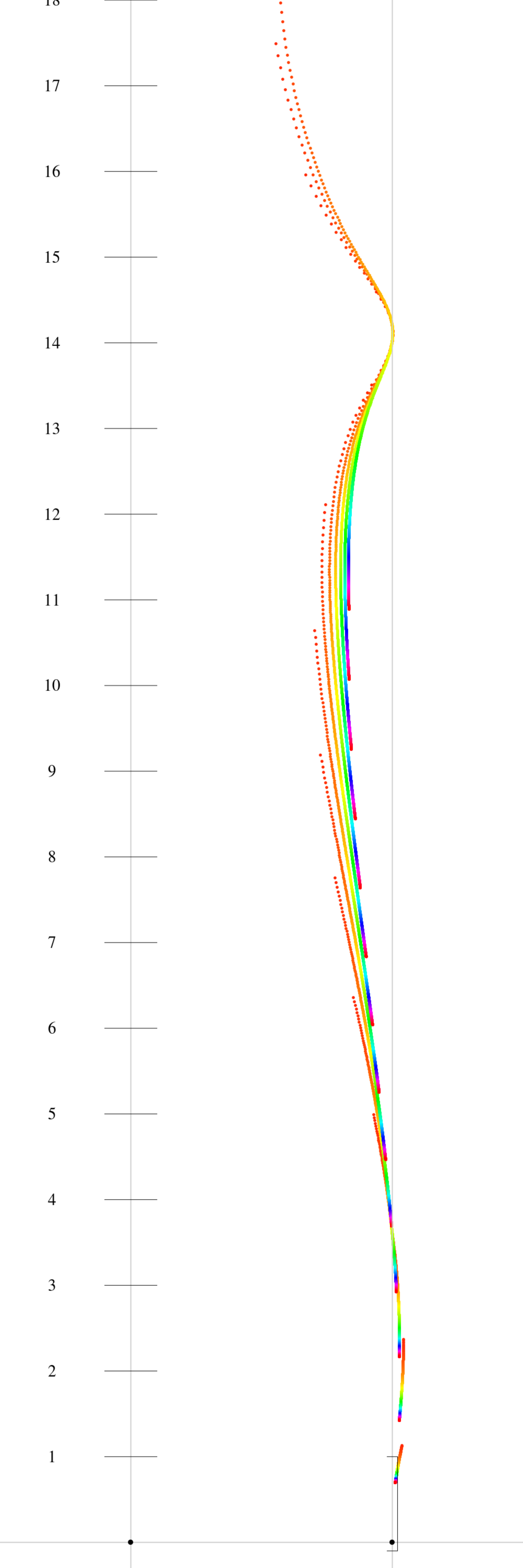}}
\scalebox{0.13}{\includegraphics{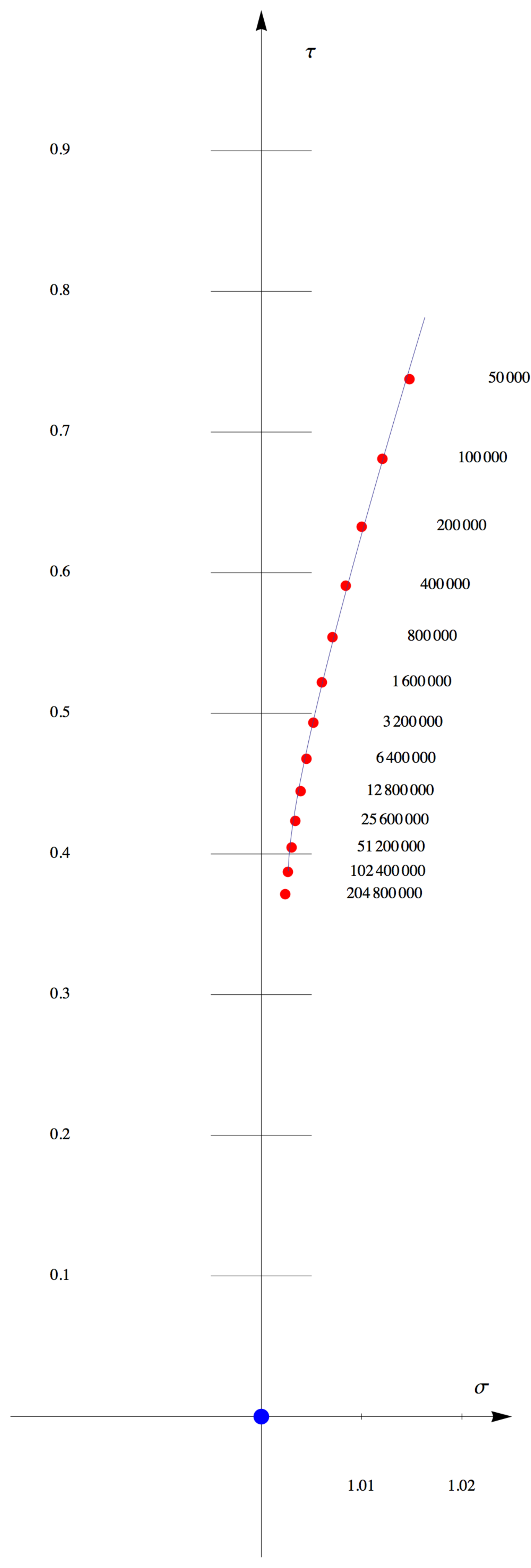}}
\caption{
Tracking the roots of $\zeta_n(s)$ for $n=100$ to $n=6000$ 
shows that their paths appear to move smoothly. 
There are still roots outside the strip but they will eventually
move to the right boundary $\sigma=1$ of the critical strip $0<\sigma<1$. 
We see that the root motion line touches the line $\sigma=1$. 
The right figure shows the tiny rectangle at the lower right corner of the 
first picture, where we can see the first root, tracked in an expensive long run
for $n=5000,10000,\dots, 20480000$ and interpolated with a quadratic fit.
The vertical line is $\sigma=1$, which contains the pole $s=1$. 
}
\label{situation}
\end{figure}

\begin{figure}
\scalebox{0.03}{\includegraphics{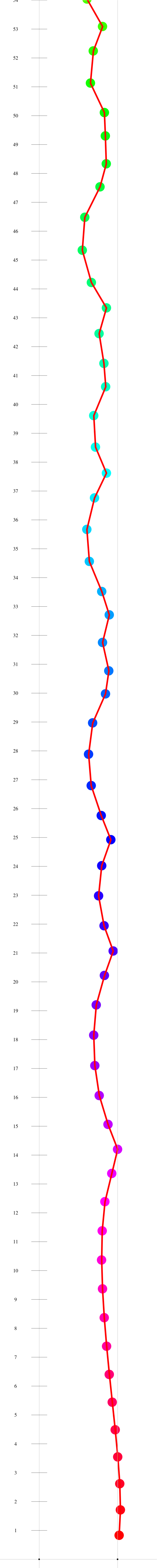}}
\scalebox{0.03}{\includegraphics{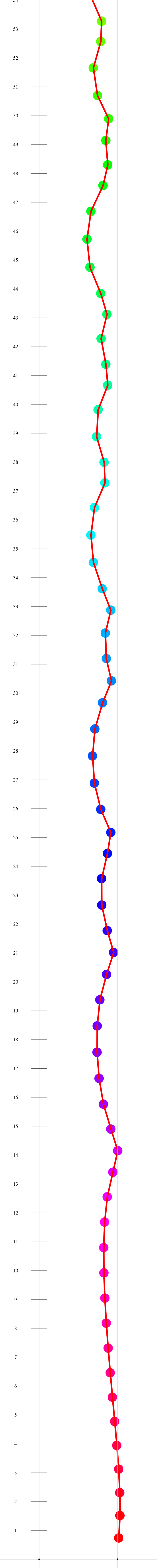}}
\scalebox{0.03}{\includegraphics{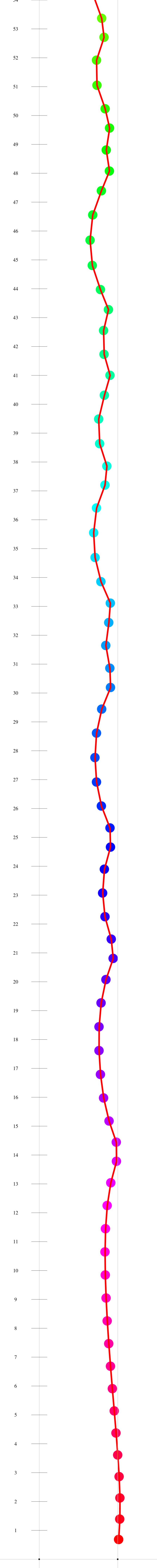}}
\scalebox{0.03}{\includegraphics{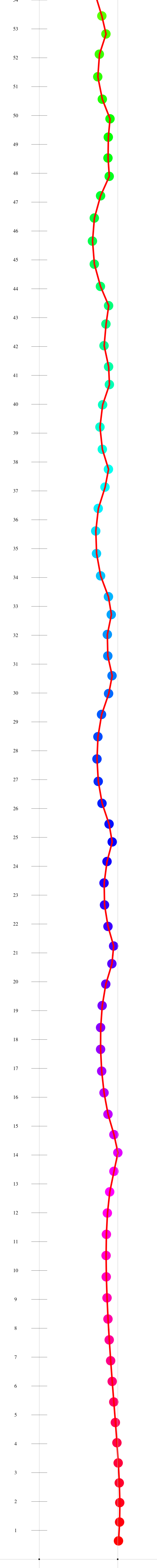}}
\scalebox{0.03}{\includegraphics{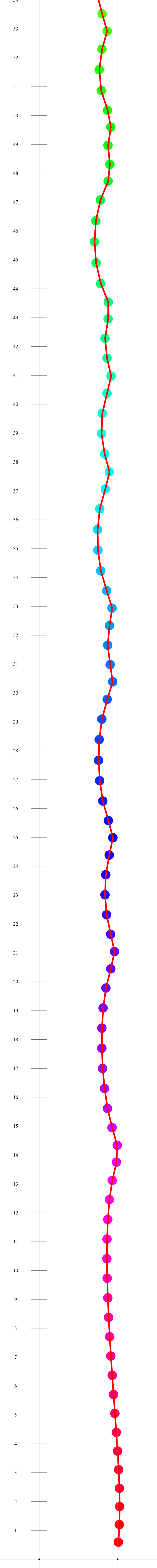}}
\scalebox{0.03}{\includegraphics{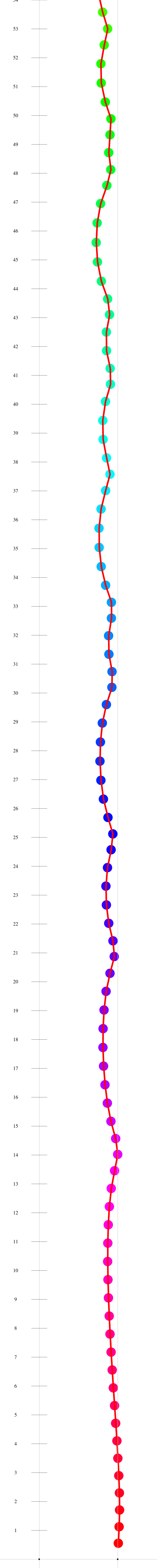}}
\caption{
The roots of $\zeta_n(s)$ for $n=2000,5000$, then for 
for $n=10000,n=20000,n=40000$ and finally for $n=80000$
are shown on the window $-0.5 \leq \sigma \leq 1.5, 0 \leq \tau \leq 54$.  
}
\label{situation}
\end{figure}

\begin{figure}
\scalebox{0.17}{\includegraphics{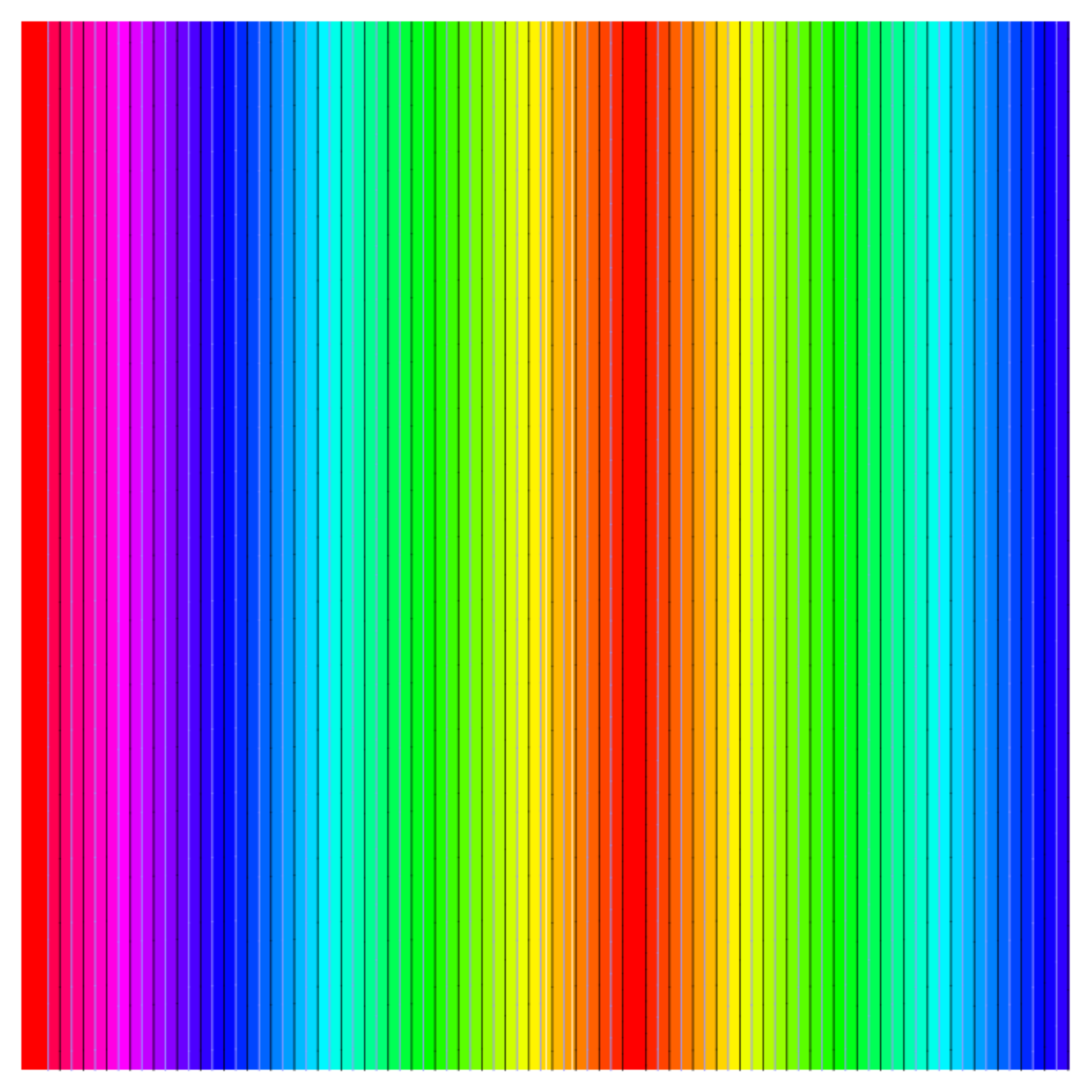}}
\scalebox{0.17}{\includegraphics{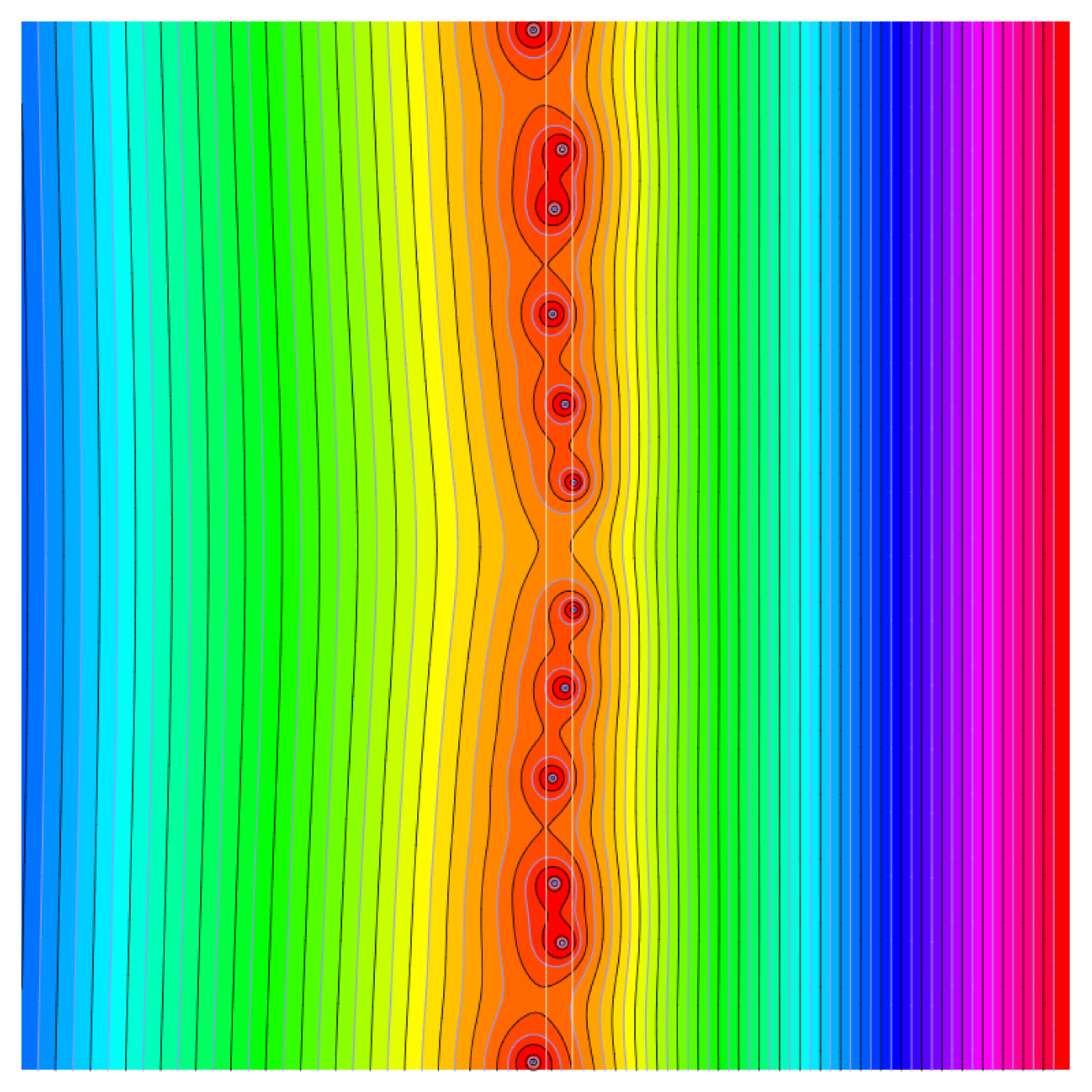}}
\scalebox{0.17}{\includegraphics{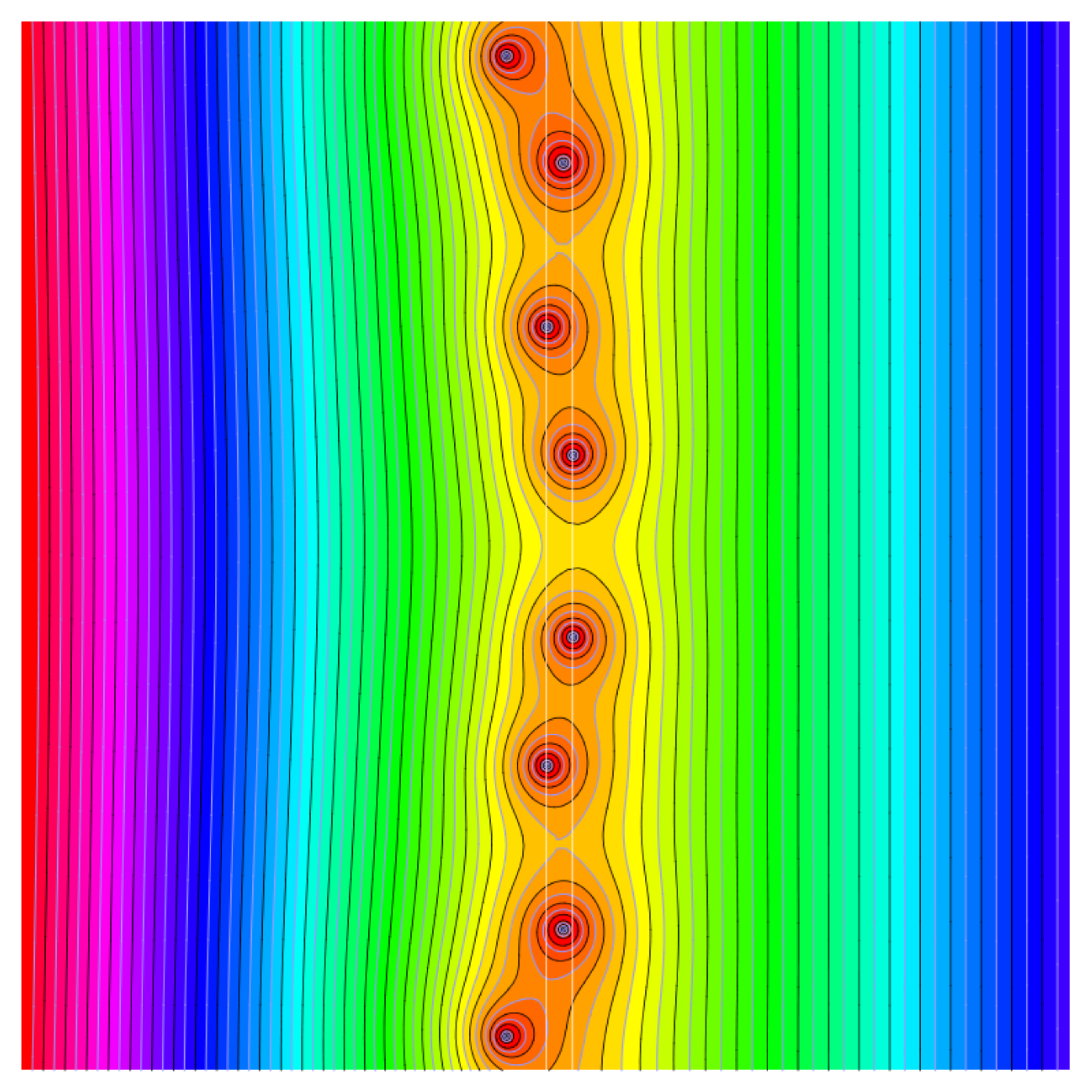}}
\scalebox{0.17}{\includegraphics{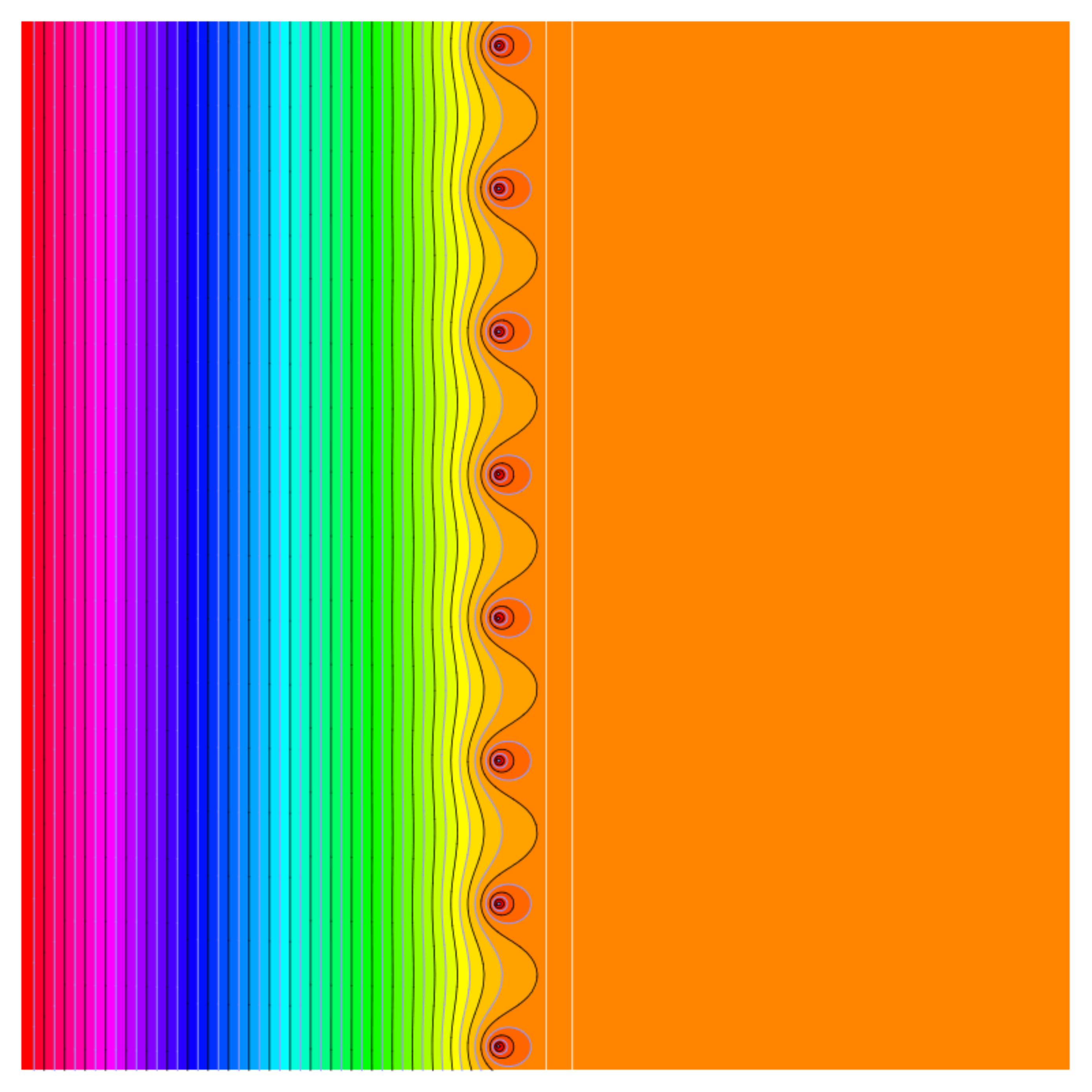}}
\scalebox{0.17}{\includegraphics{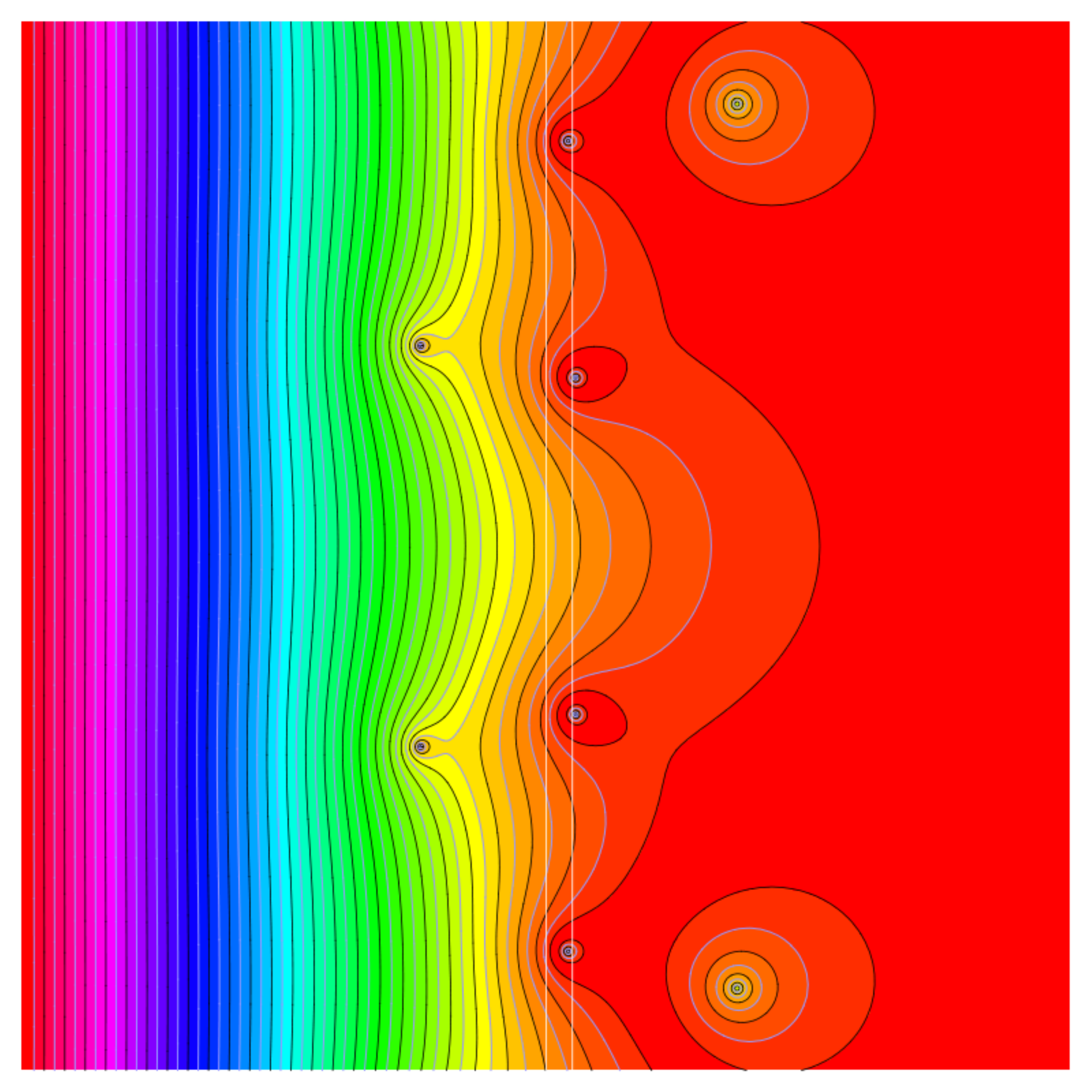}}
\scalebox{0.17}{\includegraphics{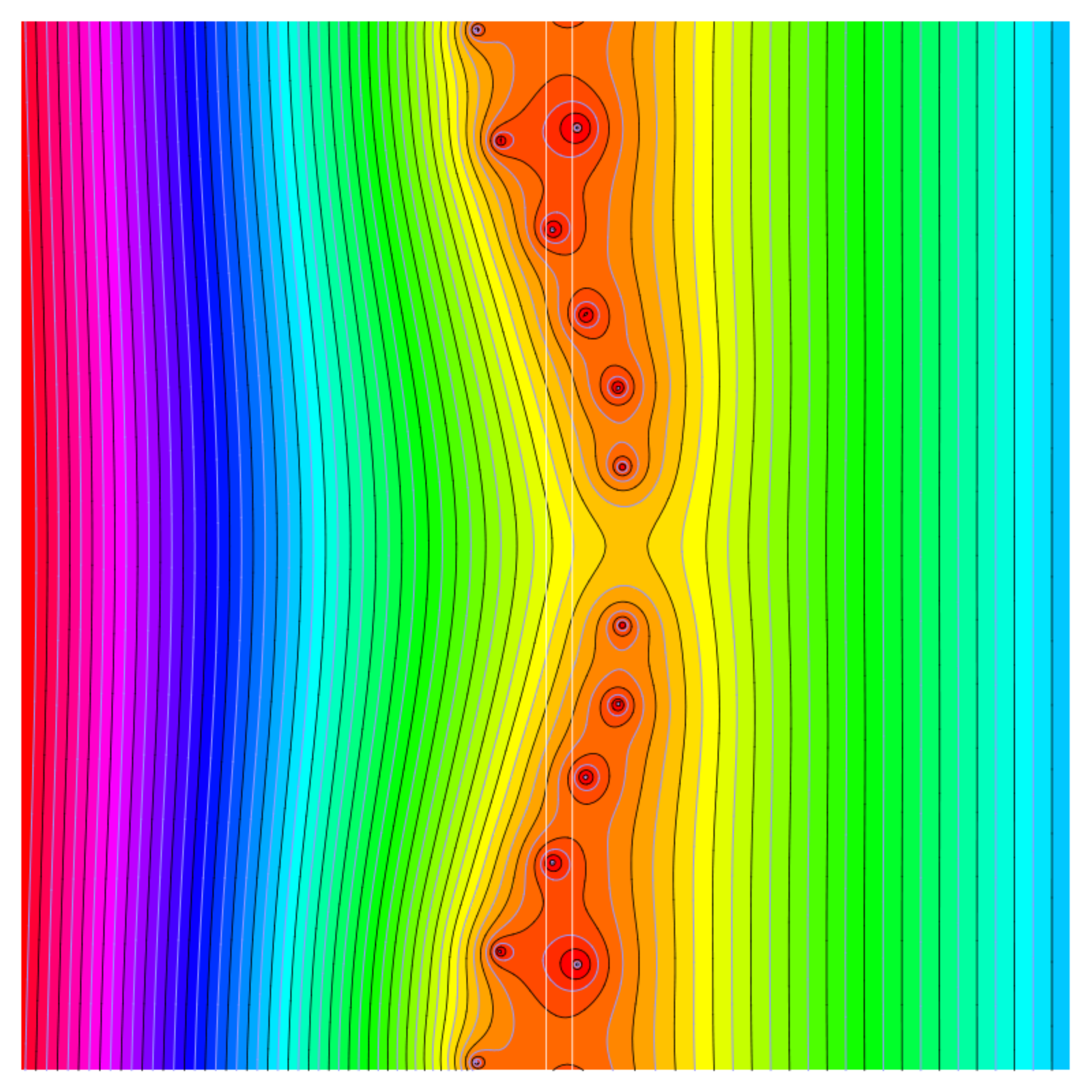}}
\scalebox{0.17}{\includegraphics{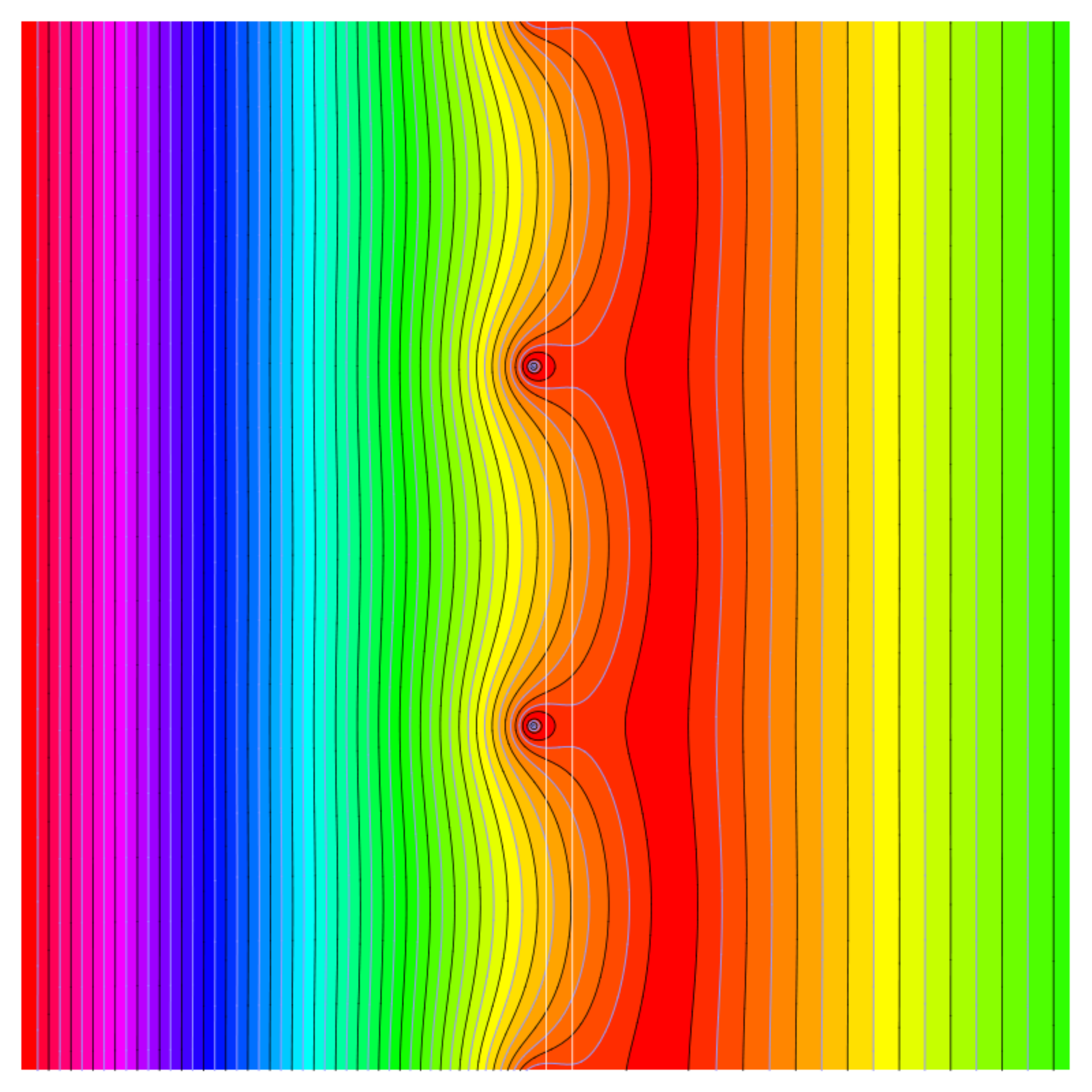}}
\scalebox{0.17}{\includegraphics{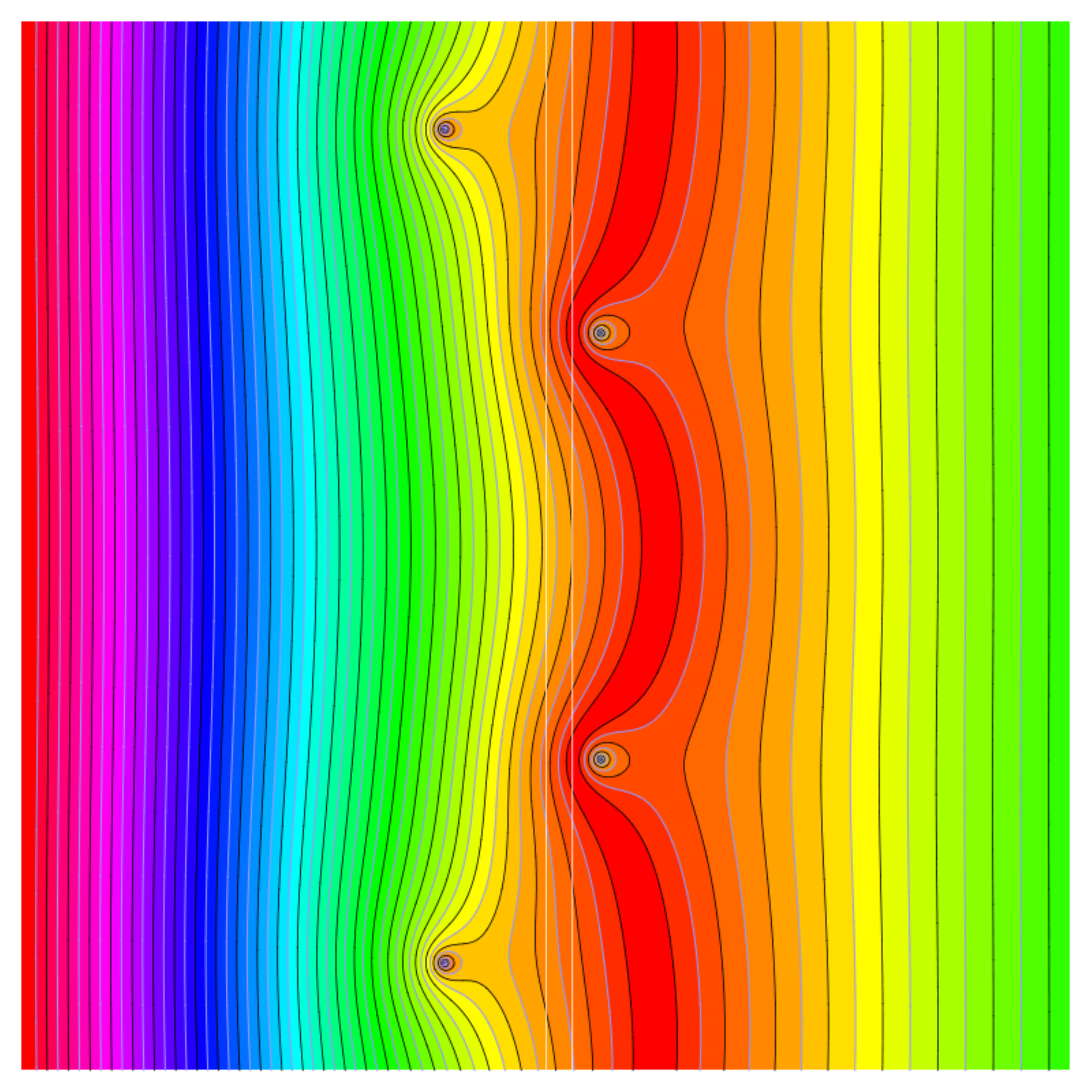}}
\caption{
The zeta functions for the 
complete graph $K_5$ the linear graph $L_{10}$ the circular graph 
$C_{10}$ the star graph $S_{10}$ the wheel graph $W_{10}$, a random Erdoes-R\'enyi 
graph $E_{20,0.5}$, the Peterson graph $P_{5,2}$ and the octahedron graph $O$.  }
\label{diracgraphexamples}
\end{figure}

\section{Appendix: Newton-Cotes with singularities}

This appendix contains a result which complements the Newton-Cotes formula for 
functions which can have singularities at the boundary of the integration 
interval. It especially applies for functions like $\sin(\pi k x)^{-s}$ which 
appear in the zeta function for discrete circles. We might expand the theme covered in this appendix 
elsewhere for numerical purposes or in the context of central limit theorems for 
stochastic processes with correlated and time adjusted random variables. Both probability 
theory and numerical analysis are proper contexts in which {\bf Archimedean 
Riemann sums} - Riemann sums with equal spacing between the intervals - can be considered. \\

We refine here Newton-Coates using a derivative we call the 
{\bf K-derivative}. This notion is well behaved for functions like $f(x) = \sin^{-s}(\pi x)$ on 
the unit interval if $s \in (0,1)$. To do so, we study the deviation of Rolle points 
from the midpoints in Riemann partition interval and use this to improve the estimate
of Riemann sums. If we would know the Rolle points exactly, then the finite Riemann sum would
represent the exact integral. Since we only approximate the Rolle points, we get close. \\

In the most elementary form, the classical {\bf Newton-Cotes formula} for smooth functions 
shows that if $\int_0^1 f(x) \; dx={\rm E}[F]$, then
$$ \lim_{n \to \infty} [\sum_{k=1}^{n-1} f(\frac{k}{n}) -n {\rm E}[f]] $$ 
is bounded above by ${\rm max}_{0 \leq x \leq 1} |f''(x)|/2$. 
This gives bounds on how fast the Riemann sum converges to the integral ${\rm E}[f]$. \\

Since we want to deal with Riemann sums for functions which are unbounded at the 
end points, where the second derivative is unbounded, we use
a {\bf K-derivative} which as similar properties
than the {\bf Schwarzian derivative}. The lemma below was developed when studying
the cotangent Birkhoff sum which features a selfsimilar Birkhoff limit. 
We eventually did not need this Rolle theory in \cite{knillcotangent} but it comes handy here
for the discrete circle zeta functions. As we will see, the result has a probabilistic
angle. For a function $g$ on the unit interval, we can see the Riemann sum $y \to \sum_{k=1}^{n-1} g((k+y)/n)$ 
as a new function which can be seen as a sum of random variables and consider normalized limits
as an analogue of central limits in probability theory. 

\begin{figure}
\scalebox{0.38}{\includegraphics{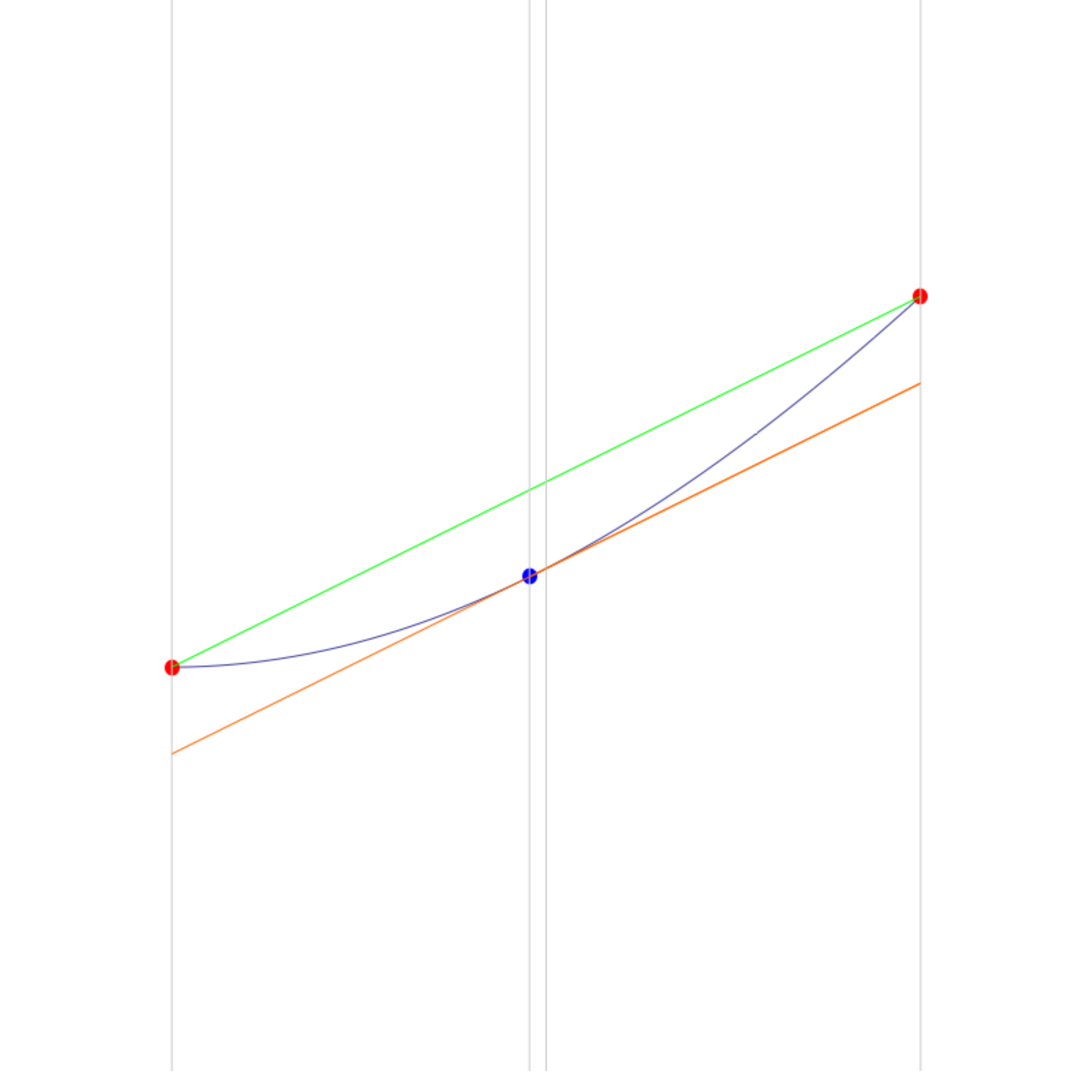}} \\
\caption{
The {\bf Rolle point} of an interval $(-a,a)$ is the point $x(a) \in (-a,a)$ satisfying $f(a)-f(-a)=f'(x(a))$ 
and which is closest to $0$. If there should be two, we could chose the right one to make it unique. 
The existence of Rolle points is assured by the mean value theorem. For small $a$ and nonlinear $f$, it is close to the center $0$ 
of the interval. The picture illustrates, how close the Rolle point $x(a)$ is in general to the
midpoint of an interval. The value $f(x(a))$ is $K_f(0) f''(0) a^2$ close to $f(0)$, 
if $K_f(0)$ is the $K$-derivative. Note that we can have $x(a)$ be discontinuous.}
\end{figure}

When normalizing this so that the mean is $0$ and the variance is $1$, we get a limiting function
which plays the role of entropy maximizing fixed points in probability theory, which is an approach
for the central limit theorems. While for smooth functions, the 
limiting function is linear, the limiting function is nonlinear if there are singularities at the 
boundary. We will discuss this elsewhere.  \\

Given a function $f$ which is smooth in a neighborhood of a point $x_0$, the
{\bf Rolle point} $r(a)$ at $x_0$ is defined as a solution
$x=x(a) \in [x_0-a,x_0+a]$ of the {\bf mean value equation}
\begin{equation}
\label{impliciteequation}
 g(a,x) = f(x_0+a)-f(x_0-a) - 2a f'(x) = 0  \; 
\end{equation}
which is closest to $x_0$ and to the right of $x_0$ if there should be two closest points. 
The derivative $f'(x(x))$ at $x(a)$ agrees with the slope of the line segment connecting 
the graph part on the interval $I=(x_0-a,x_0+a)$. \\

Given Rolle points $r_k$ inside each interval $I_k$ of a partition,
a {\bf discrete fundamental theorem of calculus} states that
$$  \sum_{j=1}^k g'(r_j) \; h_j = g(1)-g(0)  \; , $$
assuming $[0,1]$ is divided into intervals $[a_j,b_j)$ of length $h_j = b_j-a_j>0$ with $1 \leq j \leq k$
satisfying $b_j=a_{j+1}$ and $b_k=1, a_1=0$. 
Especially, if the spacing $h_j$ is the same everywhere, - in which case Riemann sums are 
often called Archimedean sums - then the exact formula 
$$   \sum_{k=1}^n f'(r_k) = n [f(b)-f(a)] \;  $$
holds. The mean value theorem establishes the existence of Rolle points:
$G(s) = f(x_0+a)-f(x_0-a) - 2 a f'(-a+2a s)$ satisfies $\int_0^1 G(s)\;ds=0$ so that
there is a solution $G(s)=0$ and so a root $x=x_0-a+2as$ of (\ref{impliciteequation}). We
take the point $s$ closest to $x_0$ and if there should be two, take the point to the right.
By definition, we have $g(0,x_0)=0$.  If the second derivative $f''(x_0)$ is nonzero, then the 
implicit function theorem assures that the map $a \to x(a)$ is a continuous function in $a$
for small $a$. The formula will indicate and confirm that near inflection points, the 
Rolle point $x(a)$ can move fast as a function of $a$. While it is also possible that Rolle point
can jump from one side to the other as we have defined it to be the closest, we can 
look at the motion of continuous path of $x_a$. \\

Assume $x_0=0$ and that we look at $f$ on the interval $[-a,a]$.  The formula 
$$  f(x_a) = f(0) + (\frac{f'(0) f'''(0) }{12 f''(0)^2}) f''(0) a^2 + O(a^4) $$
shown below in (\ref{rolleformula}) motivates to define the {\bf K-derivative} of a function $g$ as
$$ K_g(x) = \frac{g'(x) g'''(x)}{g''(x)^2} \; .  $$
It has similar features to the {\bf Schwarzian derivative}
$$  S_g(x)  = \frac{g'''(x)}{g'(x)} - \frac{3}{2} \frac{g''(x)}{g'(x))^2}  \;  $$
which is important in the theory of dynamical systems of one-dimensional maps. 
As the following examples show, they lead to similarly simple expressions, if we evaluate it for
some basic functions: \\

\begin{center} \begin{tabular}{|c|c|c|} \hline
  $g$                & Schwarzian $S_g$                   &  K-Derivative $K_g$                        \\ \hline
  $\cot(kx)$         & $2k^2$                             &  $1+\sec^2(kx)/2$                   \\
  $\sin(x)$          & $-1-3 \tan^2(x)/2$                 &  $-\cot^2(x)$                       \\
  $x^s$              & $(1-s^2)/(2x^2)$                   &  $(s-2)/(s-1)$                      \\
  $1/x$              & $0$                                &  $3/2$                              \\
  $\log(x)$          & $0$                                &  $2$                                \\
  $\exp(x)$          & $-1/2$                             &  $1$                                \\
  $\log(\sin(x))$    & $\frac{1}{2}\csc^2(x)[4-3 \sec ^2(x)]$ & $2\cos^2(x)$                    \\
  $\frac{ax+b}{cx+d}$& $0$                                &  $3/2$                              \\ \hline
\end{tabular} \end{center}

Both notions involve the first three derivatives of $g$. 
The $K$-derivative is invariant under linear transformations 
$$   K_{ag+b}(y) = K_g(y)   $$ 
and a constant for $g(x)=x^n, n \neq 0,1$ or $g(x)=\log(x)$ or $g(x)=\exp(x)$. 
Note that while the Schwarzian derivative is invariant under fractional linear
transformations
$$   S_{(ag+b)/(cg+d)}(y)  = S_g(y)  \; , $$
the K-derivative is only invariant under linear transformations only. It vanishes 
on quadratic functions. \\

An important example for the present zeta story is the function $g(x) = 1/\sin^s(x)$ for which
$$ K_g(x) = \frac{2 \cos^2(x) \left(s^2 \cos (2 x)+s^2+6 s+4\right)}{(s \cos (2 x)+s+2)^2} \;  $$
is bounded if $s>-1$. \\

We can now patch the classical Newton-Coates in situations, where the second derivative is large but the $K$-
derivative is small. This is especially useful for rational trigonometric functions for which the K-derivative 
is bounded, even so the functions are unbounded. 
The following theorem will explain the convergence of $(\zeta_n-{\rm E}[\zeta_n])/n^s$
which is scaled similarly than random variables in the central limit theorem for $s=1/2$. 
Here is the main result of this appendix: 

\begin{thm}[Rolle-Newton-Cotes type estimate]
Assume $g$ is $C^5$ on $I=(0,1)$ and has no inflection points $g''(x)=0 \in (0,1)$
and satisfies $|K_g(x)| \leq M$ for all $x \in (0,1)$.
Then, for every $0 \leq y <1$ and large enough $n$, 
\begin{eqnarray*}
 | \frac{1}{n} \sum_{k/n \in I} g(\frac{k+y}{n}) - \int_0^1 g(x) \; dx | 
     \leq \frac{1}{n^3} \sum_{k=1}^{n-1} M g''(\frac{k+y}{n}) \\
     \leq \frac{M_1}{n^2} [g'(\frac{n-(1/2)+y}{n}) - g'(\frac{y+1/2}{n})] \; . 
\end{eqnarray*}
\end{thm}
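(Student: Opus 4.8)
The plan is to localize the estimate to a single subinterval of the partition, control the deviation of the Rolle point from the midpoint using the $K$-derivative, and then sum up and telescope. Fix $n$ and, for $k$ with $k/n\in I$, set $I_k=[\,(k-1+y)/n\,,\,(k+y)/n\,]$ (adjusting the bookkeeping at the two ends so that the union is essentially $(0,1)$). On each $I_k$ the mean value theorem gives a Rolle point $r_k\in I_k$ with $g'(r_k)=\big(g((k+y)/n)-g((k-1+y)/n)\big)/(1/n)$, so that the telescoping identity $\frac1n\sum_k g'(r_k)=g(\text{right end})-g(\text{left end})$ holds exactly. The point is that $\int_0^1 g\,dx$ equals, up to the same boundary terms, the midpoint Riemann sum, so the error $\frac1n\sum_k g((k+y)/n)-\int_0^1 g\,dx$ is governed by how far each $r_k$ is from the midpoint of $I_k$.

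Next I would make the Rolle-point displacement quantitative. With $x_0$ the midpoint of $I_k$ and half-length $a=1/(2n)$, expand the mean value equation $g(x_0+a)-g(x_0-a)-2a\,g'(x)=0$: the left side at $x=x_0$ equals $\tfrac13 g'''(x_0)a^3+O(a^5)$, and $\partial_x$ of the left side is $-2a\,g''(x)$, so by the implicit function theorem (legitimate since $g''\neq0$ on $I$) the solution satisfies $x(a)-x_0=\tfrac{g'''(x_0)}{6 g''(x_0)}a^2+O(a^4)$. Feeding this back, $g(x(a))-g(x_0)=g'(x_0)(x(a)-x_0)+O(a^2\cdot(x(a)-x_0))=\tfrac{g'(x_0)g'''(x_0)}{6 g''(x_0)}a^2+O(a^4)$; this is exactly formula~(\ref{rolleformula}) cited in the text, and its coefficient is $\tfrac16 K_g(x_0)\,g''(x_0)$. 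Comparing the exact telescoped sum of $g'(r_k)$ with $\sum_k g'(x_0^{(k)})$ then yields
\[
 \Big|\frac1n\sum_{k} g\Big(\frac{k+y}{n}\Big)-\int_0^1 g(x)\,dx\Big|
 \le \frac{1}{n}\sum_k \Big|\frac{K_g(x_0^{(k)})}{6}\,g''(x_0^{(k)})\Big|\,a^2 + (\text{higher order})
 \le \frac{M}{n^3}\sum_{k=1}^{n-1} |g''|(x_0^{(k)}),
\]
absorbing the constant $\tfrac16$ and the $O(a^4)$ terms into the constant and into the hypothesis ``for large enough $n$''. (Since $g$ has no inflection point, $g''$ has a fixed sign, so $|g''|$ can be replaced by $\pm g''$.)

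Finally I would pass from the sum of $|g''|$ over the midpoints to the telescoping bound in terms of $g'$. Because $g''$ is monotone-signed and $g'$ is therefore monotone, the points $x_0^{(k)}=(k-\tfrac12+y)/n$ are spaced $1/n$ apart, and $\frac1n\sum_k |g''|(x_0^{(k)})$ is a Riemann sum comparing with $\int |g''| = |g'(\text{right end})-g'(\text{left end})|$; monotonicity of $g''$ makes the midpoint rule over- or under-estimate the integral by a controlled amount, giving $\frac1n\sum_k |g''|(x_0^{(k)}) \le M_1\big[g'(\tfrac{n-1/2+y}{n})-g'(\tfrac{y+1/2}{n})\big]$ for a universal $M_1$. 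Combining with the previous display gives the second inequality.

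The main obstacle is honest control of the error terms near the endpoints, where $g$, $g''$, $g'''$ all blow up: the expansion $x(a)-x_0=O(a^2)$ and the remainder $O(a^4)$ have constants depending on higher derivatives of $g$, which are large near $0$ and $1$. One must check that, for the grid points actually used (which stay $\sim 1/n$ away from the singularities), these ``constants'' times $a^4=O(n^{-4})$ are still dominated by the claimed $O(n^{-3})$ main term — this is precisely where $C^5$ regularity and the boundedness of $K_g$ (equivalently, the admissible growth rates $x^{-s}$, $\log x$, etc.) are used, and it is why the statement is only claimed for $n$ large. Handling the two boundary intervals of the partition separately (where one endpoint of $I_k$ may land outside $I$, or the half-length differs) is a routine but necessary bit of bookkeeping.
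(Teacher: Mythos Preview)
Your outline tracks the paper's strategy (Rolle points plus a local Taylor expansion, then reinterpret the resulting sum of second derivatives as a Riemann sum for $\int g'' = g'\big|_{\text{near-endpoints}}$), but there is a genuine gap in how the first two paragraphs are wired together. You choose $r_k$ to be Rolle points of $g$ itself, so that $g'(r_k)$ equals the secant slope of $g$ on $I_k$ and $\tfrac{1}{n}\sum_k g'(r_k)$ telescopes to boundary values of $g$. You then use the displacement formula to control $g(r_k)-g(x_0^{(k)})$ and immediately claim the bound on $\tfrac{1}{n}\sum_k g((k+y)/n)-\int_0^1 g$. That last step is a non sequitur: the telescoping you set up concerns $\sum_k g'(r_k)$, whereas the quantity to be estimated involves $\sum_k g$ and $\int g$. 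Knowing $g(r_k)-g(m_k)$ would only help if $\tfrac{1}{n}\sum_k g(r_k)=\int_0^1 g$, and for Rolle points of $g$ this identity is simply false.

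The paper's argument (terse as it is) avoids exactly this by applying the mean value theorem to the \emph{antiderivative} $G$ with $G'=g$: for the Rolle points $r_k$ of $G$ one has $g(r_k)=n\int_{I_k} g$, hence, after normalising $\int_0^1 g=0$, the exact identity $\sum_k g(r_k)=0$ that the paper invokes. The error is then literally $\tfrac{1}{n}\sum_k\bigl[g((k+y)/n)-g(r_k)\bigr]$, and it is this difference that the Rolle point lemma (now applied with $f=G$) is meant to bound term by term. With that one correction your remaining steps --- bounding each summand by a constant times $g''((k+y)/n)/n^2$ and then reading $\tfrac{1}{n}\sum_k g''$ as a Riemann sum for $g'$ evaluated at the two near-boundary grid points --- line up with the paper's proof. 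Your closing remarks about boundary bookkeeping and about the higher-order ``constants'' blowing up near $0$ and $1$ are on point and are exactly why the $K_g$-hypothesis together with the caveat ``for large enough $n$'' is needed.
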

\begin{proof}
We can assume that $\int_0^1 g(x) \; dx = 0$ without loss of generality. Let $y/n+r_k$ be the Rolle points.
Then $\sum_{k/n \in I} g(r_k)=0$. Now use that
$|g(\frac{k+y}{n})|  \leq  M g''(\frac{k+y}{n})$, where $M$ is the upper bound for the K-derivative. 
The last equation is a Riemann sum. 
\end{proof}

This especially holds if $g$ is smooth in a neighborhood of the compact interval $[0,1]$ in which case the limit
\begin{equation}
 T(g)(y) = \lim_{n \to \infty} [ \sum_{k/n \in I} g(\frac{k+y}{n}) - n \int_a^b g(x) \; dx] 
 \label{renormalization1}
\end{equation}
exists and is linear in $y$. We will look at this ``central limit theorem" story elsewhere. 
The result has two consequences, which we will need:

\begin{coro}
For $g(x)=\sin(\pi x)^{-s} - (\pi x)^{-s}$ and $s>1$, we have
$$ \frac{1}{n^s} \sum_{\frac{k}{n} \in (0,1/2)}  g(\frac{k}{n}) \to 0  \; .  $$
\end{coro}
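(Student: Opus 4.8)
The plan is to exploit the fact that the two singularities at $x=0$ cancel, so that $g$ is in fact small near $0$ and bounded away from $0$. For $x\in(0,1/2]$ I would write
$$ g(x) = (\pi x)^{-s}\Big[\Big(\tfrac{\sin(\pi x)}{\pi x}\Big)^{-s}-1\Big] =: (\pi x)^{-s}\, h(x) , $$
and observe that $v(x)=\sin(\pi x)/(\pi x)$ extends to a real--analytic strictly positive function on $[0,1/2]$ with $v(0)=1$ and $v'(0)=0$. Hence $h$ is smooth on $[0,1/2]$ with $h(0)=0$ and $h'(0)=0$, so Taylor's theorem with remainder yields a constant $C=C(s)=\tfrac12\sup_{[0,1/2]}|h''|$, finite and locally bounded in $s$, with $|h(x)|\le C x^{2}$ on $[0,1/2]$. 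Writing $\sigma={\rm Re}(s)>1$ and using $|(\pi x)^{-s}|=(\pi x)^{-\sigma}$ for real $x>0$, this gives the pointwise bound $|g(x)|\le \pi^{-\sigma}C\,x^{2-\sigma}$ on $(0,1/2]$.

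Next I would simply sum this crude bound over the grid. With $m=\lfloor n/2\rfloor$ one gets
$$ \frac{1}{|n^{s}|}\Big|\sum_{k/n\in(0,1/2)} g(k/n)\Big| \ \le\ \frac{\pi^{-\sigma}C}{n^{\sigma}}\sum_{k=1}^{m}\Big(\frac{k}{n}\Big)^{2-\sigma} \ =\ \pi^{-\sigma}C\,\frac{1}{n^{2}}\sum_{k=1}^{m}k^{2-\sigma} . $$
Comparing $\sum_{k=1}^{m}k^{2-\sigma}$ with $\int_{1}^{m}x^{2-\sigma}\,dx$ shows it is $O(n^{3-\sigma})$ when $\sigma<3$, $O(\log n)$ when $\sigma=3$, and $O(1)$ (it converges to $\zeta(\sigma-2)$) when $\sigma>3$. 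Multiplying by $n^{-2}$ gives respectively $O(n^{1-\sigma})$, $O(n^{-2}\log n)$ and $O(n^{-2})$, each of which tends to $0$ because $\sigma>1$. Since every constant above is uniform for $s$ in a compact subset of $\{{\rm Re}(s)>1\}$, the convergence is uniform there as well, which is exactly what the preceding proposition needs.

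I do not expect a genuine obstacle; the two points that need care are (i) that the leading singular terms of $\sin(\pi x)^{-s}$ and $(\pi x)^{-s}$ cancel exactly, so that $h$ vanishes to order $2$ at $0$ and only the harmless power $x^{2-\sigma}$ survives, evaluated on a grid that stays $1/n$ away from the singularity, and (ii) that the Taylor remainder estimate for $h$ is uniform in $s$ on compacta. One could alternatively, for $1<{\rm Re}(s)<3$, invoke the Rolle--Newton--Cotes estimate of this appendix applied directly to $g$ (whose $K$--derivative near $0$ is bounded, since there $g\sim \mathrm{const}\cdot x^{2-s}$, cf.\ the $x^{s}$ row of the table), obtaining even that $\sum_{k/n\in(0,1/2)}g(k/n)-n\int_{0}^{1/2}g$ stays bounded; but since $\int_{0}^{1/2}g$ diverges once ${\rm Re}(s)\ge 3$, the elementary termwise bound above is the cleaner and fully general route.
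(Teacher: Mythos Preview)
Your argument is correct. Both you and the paper rely on the same key observation---that the singularities of $\sin(\pi x)^{-s}$ and $(\pi x)^{-s}$ at $x=0$ cancel---but the paper's own proof is a one-liner: it simply asserts that $g$ is continuous on $(0,1/2)$, hence the Riemann sum $\tfrac1n\sum_{k/n\in(0,1/2)} g(k/n)$ converges, and then (implicitly) multiplies by $n^{1-s}\to 0$. That route is literally valid only when $g$ extends continuously to $x=0$, i.e.\ for $\sigma={\rm Re}(s)\le 2$; for $\sigma>2$ one has $g(x)\sim \tfrac{s}{6}(\pi x)^{2-s}\to\infty$, and for $\sigma\ge 3$ the function $g$ is not even in $L^1(0,1/2)$, so the ``Riemann sum converges'' step needs further justification.

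Your approach sidesteps this by extracting the precise order of vanishing $h(x)=O(x^{2})$ and bounding the sum termwise, which gives the claim uniformly for all $\sigma>1$ (and uniformly on compacta, as the surrounding proposition requires). So your proof is essentially a more careful execution of the same idea, but it genuinely buys you the full range $\sigma>1$ that the paper's terse argument does not cover as stated. Your closing remark about the alternative Rolle--Newton--Cotes route and why it fails for $\sigma\ge 3$ is also accurate.
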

\begin{proof} 
The function $g$ is continuous on $(0,1/2)$. Therefore, the Riemann sum
$$  \frac{1}{n} \sum_{\frac{k}{n} \in (0,\frac{1}{2})}  g(\frac{k}{n}) $$
converges. 
\end{proof} 

The second one is:

\begin{coro}
For $g(x)=\sin(\pi x)^{-s}$ we know that
$$ [ \sum_{k/n \in I} g(\frac{k+y}{n}) - n \int_0^1 g(x) \; dx]/n^s  $$
converges uniformly for $s$ in compact subsets $K$ of the strip $0<{\rm Re}(s)<1$. 
\end{coro}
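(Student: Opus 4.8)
The plan is to deduce this statement directly from the Rolle--Newton--Cotes estimate (the main theorem of the appendix) applied to the function $g_s(x) = \sin(\pi x)^{-s}$, with the only extra work being to keep track of uniformity in the parameter $s$ over a compact set $K \subset \{0 < \mathrm{Re}(s) < 1\}$. First I would recall from the comparison table that $K_{g_s}(x) = \frac{2\cos^2(x)(s^2\cos(2x)+s^2+6s+4)}{(s\cos(2x)+s+2)^2}$ (evaluated with $\pi x$ in place of $x$ after rescaling), which is a rational function of $\cos(2\pi x)$ whose denominator $s\cos(2\pi x)+s+2$ stays away from zero for $\mathrm{Re}(s)$ in a fixed compact interval inside $(0,1)$; hence there is a single constant $M = M_K$ with $|K_{g_s}(x)| \le M_K$ for all $x \in (0,1)$ and all $s \in K$. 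One also checks $g_s$ has no inflection point in $(0,1)$ and is $C^5$ there, so the hypotheses of the Rolle--Newton--Cotes theorem hold uniformly on $K$.

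Next I would invoke the conclusion of that theorem: for each such $s$ and large $n$,
$$
\Bigl| \tfrac{1}{n}\sum_{k/n \in I} g_s\bigl(\tfrac{k+y}{n}\bigr) - \int_0^1 g_s(x)\,dx \Bigr|
 \le \frac{M_K}{n^2}\Bigl[g_s'\bigl(\tfrac{n-1/2+y}{n}\bigr) - g_s'\bigl(\tfrac{y+1/2}{n}\bigr)\Bigr].
$$
Multiplying by $n$ gives that the quantity $\bigl[\sum_{k/n\in I} g_s((k+y)/n) - n\int_0^1 g_s\bigr]/n^s$ is bounded by $\frac{M_K}{n^{1+s}}\bigl[g_s'(\tfrac{n-1/2+y}{n}) - g_s'(\tfrac{y+1/2}{n})\bigr]$. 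Since $\int_0^1 g_s = c(s)\,2^s$ up to the normalization already fixed, this is exactly $[\zeta_n(s) - n c(s)]/n^s$ up to the harmless constant $2^s$, so the corollary reduces to showing this upper bound tends to $0$ uniformly for $s \in K$.

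The estimate of that bound is then a direct computation: $g_s'(x) = -s\pi\cos(\pi x)\sin(\pi x)^{-s-1}$, so near $x = 0$ one has $|g_s'(x)| \sim |s|\,\pi^{-s}\, x^{-\sigma-1}$ (with $\sigma = \mathrm{Re}(s)$), whence $|g_s'(\tfrac{y+1/2}{n})| = O(n^{\sigma+1})$; the term at $x$ near $1$ is symmetric. Therefore $\frac{1}{n^{1+s}}\bigl[g_s'(\tfrac{n-1/2+y}{n}) - g_s'(\tfrac{y+1/2}{n})\bigr] = O(n^{\sigma+1}/n^{1+\sigma}) = O(1)$ — which by itself is not yet decay. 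To get the actual convergence to $0$ I would not use the crude single-term bound but rather the finer middle expression $\frac{M_K}{n^3}\sum_{k=1}^{n-1} g_s''((k+y)/n)$: here $g_s''$ has a singularity of type $x^{-\sigma-2}$, so $\frac{1}{n^3}\sum_k g_s''(\tfrac{k+y}{n}) = \frac{1}{n}\cdot\frac{1}{n^2}\sum_k g_s''(\tfrac{k+y}{n})$, and $\frac{1}{n^2}\sum_k g_s''(\tfrac{k+y}{n}) = \frac{1}{n^{\sigma}}\cdot\bigl(\tfrac{1}{n^{2-\sigma}}\sum_k \cdots\bigr)$; the sum $\sum_{k=1}^{n-1} (k/n)^{-\sigma-2}$ is dominated by its first terms and grows like $n^{\sigma+1}$, giving $\frac{1}{n^2}\sum \sim C(s) n^{\sigma-1}$, hence the whole expression is $O(n^{\sigma-2}) \to 0$ when it is multiplied by the extra $n$ from passing to the $n^s$-normalization. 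All the implied constants depend only on $\max_{s\in K}|s|$ and $\min_{s\in K}\sigma$, $\max_{s\in K}\sigma$, so the convergence is uniform on $K$, and Hurwitz's theorem (already used above) then yields the absence of roots in the limit.

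The main obstacle I anticipate is the bookkeeping at the boundary: one must verify that the Rolle--Newton--Cotes theorem really applies with $y$ possibly nonzero and with the partition $I = (0,1)$ while $k/n$ ranges only over interior points, so that the endpoint contributions $g_s(1/n)$ and $g_s((n-1)/n)$ — each individually of size $n^{\sigma}$, hence of size $n^{\sigma}/n^{\sigma} = O(1)$ after dividing by $n^s$ — are correctly absorbed rather than dropped. The cleanest fix is to note that a single boundary term contributes $O(n^{\sigma - s}) = O(n^{-i\tau})$ in modulus $O(1)$, which does \emph{not} go to zero, so one cannot simply discard endpoints; instead the sum must genuinely start at $k=1$ and the comparison $|g_s((k+y)/n)| \le M_K\, g_s''((k+y)/n)/n^2$ from the proof of the main theorem must be applied uniformly down to $k=1$, which is exactly where the boundedness of $K_{g_s}$ (as opposed to boundedness of $g_s''$) does the work. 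Making that uniform-in-$s$ application rigorous — in particular the implicit-function-theorem control of the Rolle points near the singular endpoints — is the delicate point, but it is already the content of the appendix's main theorem, so here it is only a matter of checking the constants are uniform over $K$.
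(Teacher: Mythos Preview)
There is a genuine gap: you have misidentified the goal. The corollary asserts that $[\sum_k g((k+y)/n) - n\int_0^1 g]/n^s$ \emph{converges}, not that it converges to zero. In fact the limit is the nonzero function $\chi(s)$ appearing in the ``Central limit in critical strip'' theorem. Your argument attempts to drive the Rolle--Newton--Cotes upper bound to zero, which cannot succeed.

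The arithmetic reflects this. You write that $\sum_{k=1}^{n-1}(k/n)^{-\sigma-2}$ grows like $n^{\sigma+1}$, but since $\sigma+2>1$ the sum $\sum_k k^{-\sigma-2}$ converges and hence $\sum_{k=1}^{n-1}(k/n)^{-\sigma-2}=n^{\sigma+2}\sum_k k^{-\sigma-2}\sim C\,n^{\sigma+2}$. Tracing this through, the finer bound $\frac{M_K}{n^{2+\sigma}}\sum_k g''((k+y)/n)$ is of order $n^{\sigma+2}/n^{2+\sigma}=O(1)$, exactly as the cruder bound $\frac{M_K}{n^{1+\sigma}}|g'(1/(2n))|\sim n^{\sigma+1}/n^{1+\sigma}=O(1)$ already told you. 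Neither form decays; an $O(1)$ bound is the best a single application of the estimate can give, and is consistent with a nonzero limit.

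What the paper does instead is iterate: the Rolle point formula $f(x_a)=f(0)+K_f(0)f''(0)a^2/6+O(a^4)$ turns the first application into an approximate \emph{identity}, expressing $[\sum_k g(k/n)-n\int g]/n^s$ (up to a controlled error) as a new sum of the shape $\frac{1}{n^{2+s}}\sum_k K_g(k/n)\,g''(k/n)$. One then applies the Rolle--Newton--Cotes estimate a second time to this new sum, obtaining a uniform majorant that forces convergence to an analytic limit. The step you are missing is exactly this second application; your proposal stops after the first and tries to squeeze decay out of a bound that is, correctly, only $O(1)$.
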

\begin{proof}
We have seen already that the K-derivative of $g$ is bounded. 
We have $G_n=[g'(\frac{n-1/2}{n}+\frac{y}{n}) - g'(\frac{1/2}{n}+\frac{y}{n})] \leq C_1$. 
Because the function $g''$ has always the same sign, the Rolle estimate shows that 
$|S_n/n-G_n| \leq |\sum_{k=1}^{n-1} g''(k/n) M/n^2|$. Applying the lemma again shows that this 
is the limit of $\frac{1}{n^{1+s}} [ g'(1-1/(2n)) - g'(1/(2n)) ]$. We see that we have 
a uniform majorant and so convergence to a function which is analytic inside the strip. 
\end{proof}

Given a smooth $L^1$ function $g$ on $(0,b)$ and $n \geq 2$, define 
$$  S_n(g)(y) = \sum_{0<k/n<1}  g(\frac{k+y}{n}) $$ 
and ${\rm E}[S_n]=\int_0^1 S_n(y) \; dy$ as well as 
${\rm Var}[S_n] = \int_0^1 (S_n-{\rm E}[S_n])^2 \; dx$ and
$\sigma[S_n]=\sqrt{{\rm Var}[S_n]}$. Define
$$ T_n(g) = \frac{S_n(g)-{\rm E}[S_n])}{\sigma(S_n)} \; . $$

\begin{figure}
\scalebox{0.21}{\includegraphics{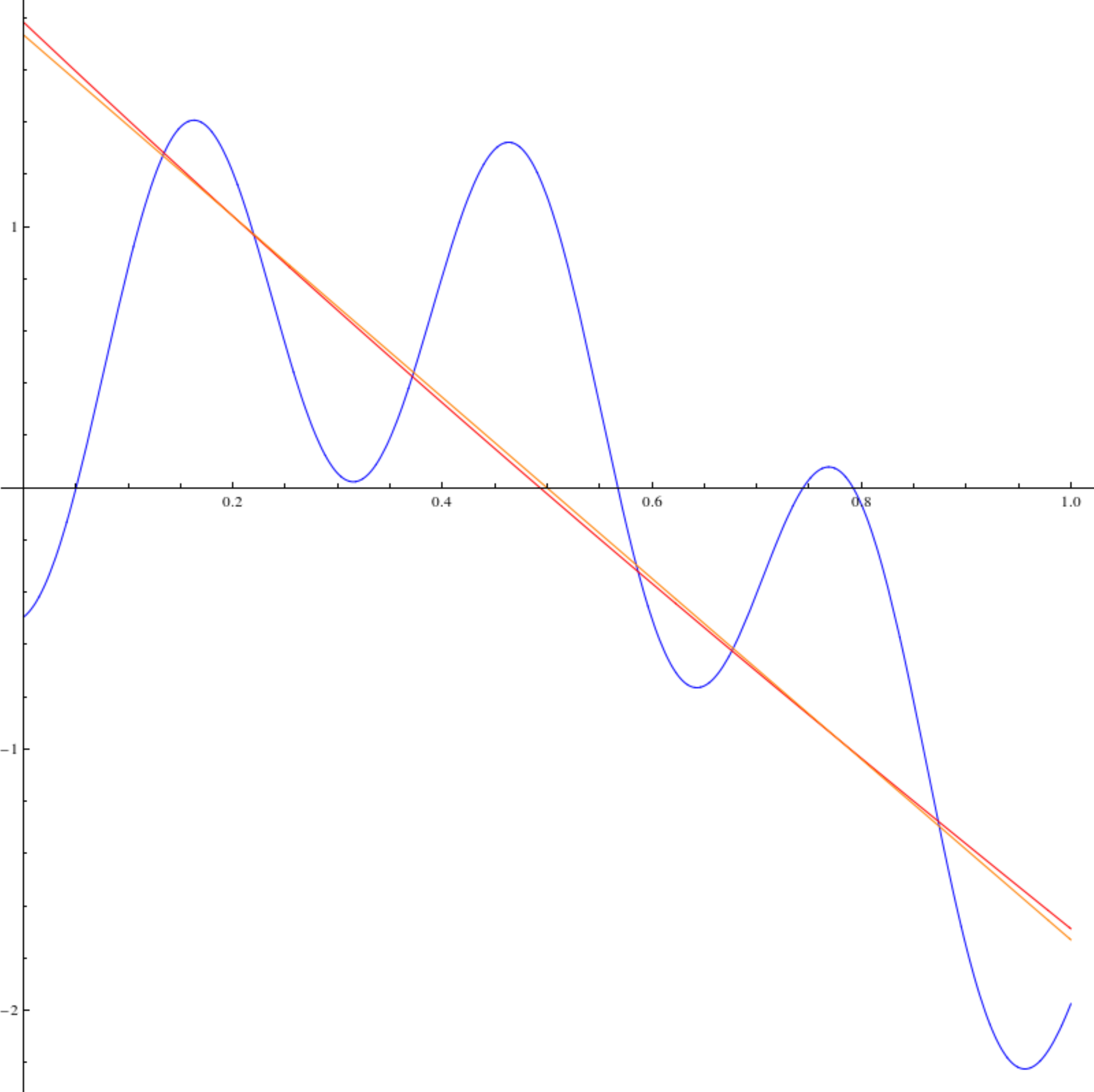}}
\scalebox{0.21}{\includegraphics{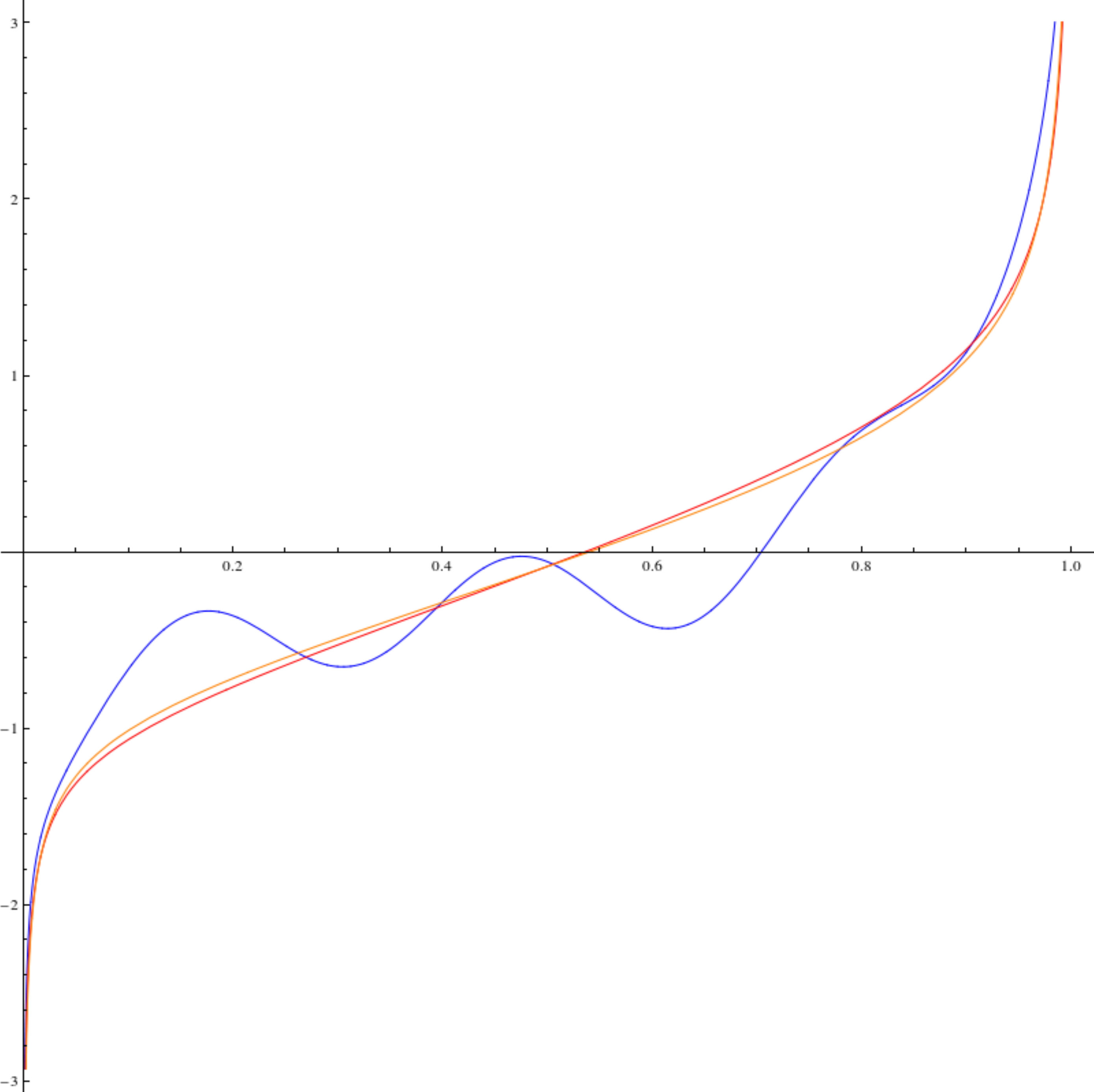}}
\caption{
Numerically we can iterate the operator $T$ on functions on $(0,1)$ twice.
For the picture, we used the renormalization step $T_{30}$ 
defined in (\ref{renormalization1}) twice. In the first picture, the initial 
function was a random smooth function of zero mean and variance $1$. 
The second example is a function for which $\lim_{x \to 0} g(x) (x-0)^{1/3}=1$ and 
$\lim_{x \to 1} g(x) (x-1)^{1/3} = -2$. The figures illustrate $T(Tg))=T(g)$ and that
for smooth functions in a neighborhood of $[0,1]$, the limit $T(g)$ is one of
the two linear functions $ax+b = \pm \sqrt{12} x-\sqrt{3}$. If the function
is unbounded at the ends, then the limiting function can become nonlinear.  }
\label{centrallimittheorem}
\end{figure}

{\bf Examples.} \\
{\bf 1)} The function $g(x)=1/x$ on $(1/n,1)$ has the K-derivative $3/2$. The Riemann sum
is $(1/n) \sum_{k=1}^{n} n/k = \sum_{k=1}^{n} 1/k$. The integral is $\log(n)$. The theorem tells
that $\sum_{k=1}^n 1/k - \log(n)$ is bounded. Of course, we know it converges to $\gamma$ 
the Euler-Mascheroni constant. \\
{\bf 2)} Lets take $g(x) = \log(x)$ on  $I=[1,b)$ for which the K-derivative $K_g(x) = 3/2$ 
independent of $\epsilon$.  Assume $y=0$. 
The classical Newton-Coates would not have give us a good estimate for the Riemann sum because the interval
increases. However, the above theorem works. 
Applying the above theorem for $n=1$ gives for every $b>1$
$$ \sum_{k<b} \frac{1}{k} - \log(b)   \leq  2+1 =3 \; . $$
Indeed the limit for $b \to \infty$ exists and is again {\bf Euler-Mascheroni constant}. But the Rolle estimate
is uniform in $n$. The bound $3$ on the right hand side is not optimal. One can show it to be $\leq 1$.  \\
{\bf 3)} For $g(x) = \cot(\pi x)$, we have ${\rm min}( \{|K_f(x) f''(x)|/6,|f''(x)/12|\}) \leq 1$. Again assume first $y=0$. 
We have $|\sum_{k=1}^{[(n-1)/2]} \cot(\pi k/n)/n + \log( \sin(\pi x)/\pi)| \leq 1$ for all $n$. 
The limit which exists  according to the theorem is about $0.183733$.  \\
{\bf 4)} The theorem implies that the limit $(1/n) \sum_{k=1}^{n-1} \cot(\pi (k+y)/n)$ exists. Actually, 
since this is the Birkhoff normalization operator (a special Ruelle transfer operator known in dynamical systems
theory), the limit is explicitly known to be $\cot(\pi y)$ if we sum from $k=0$ to $n-1$. \\
{\bf 5)} Let $g(x) = \sin^{-s}(\pi x)$ with $0<s<1$, where $M$ is $1$. We also have 
$$ \lim_{n \to \infty} \frac{g(\frac{1}{n})}{n^s} = \frac{1}{\pi^s} $$ 
converging. The theorem applies and the limit exists because for $g(x) = \sin(\pi x)^{-s}$, 
the Rolle function is bounded. 

\begin{lemma}[Rolle point lemma]
Assume $f$ is 5 times continuously differentiable in an interval
$I = (-b,b)$ and that $f''(0)$ is not zero. Then
there is $\xi \in I$ such that the Rolle point $x(a)$ satisfies
$$ x(a) = a^2 \frac{f'''(0)}{6 f''(0)} + a^4 \frac{f'''''(\xi)}{120 f''(0)} \; .  $$
Furthermore,
$$ f(x_a) = f(0) + a^2 K_f(0) \frac{f''(0)}{6} + O(a^4)   \; . $$
\label{rolleformula}
\end{lemma}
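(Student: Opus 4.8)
The plan is to solve the mean value equation $f(a)-f(-a)-2af'(x)=0$ by Taylor expansion and then compose with $f$. First I would Taylor-expand the left-hand side $g(a,x)=f(a)-f(-a)-2af'(x)$ in both variables around $(0,0)$. The difference $f(a)-f(-a)$ is odd in $a$, so only odd powers survive: $f(a)-f(-a) = 2af'(0) + \frac{a^3}{3}f'''(0) + \frac{a^5}{60}f'''''(\eta)$ for some $\eta$ between $-a$ and $a$, by Taylor's theorem with Lagrange remainder applied to the odd part. On the other side, $2af'(x) = 2af'(0) + 2af''(0)x + af'''(0)x^2 + \cdots$. Setting the two equal and cancelling the common $2af'(0)$, then dividing by $2a$ (legitimate for $a\neq 0$), I get the scalar equation
$$ f''(0)\,x + \tfrac12 f'''(0)\,x^2 + O(x^3) = \tfrac{a^2}{6}f'''(0) + \tfrac{a^4}{120}f'''''(\eta) \; . $$
Since $f''(0)\neq 0$, the implicit function theorem guarantees a unique small solution $x=x(a)$ with $x(0)=0$, analytic-in-$a^2$ to the order we need because the right side is even in $a$ and the left side is a power series in $x$ with nonzero linear coefficient.

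Next I would extract the asymptotics of $x(a)$. To leading order $f''(0)x(a) \approx \tfrac{a^2}{6}f'''(0)$, so $x(a) = \tfrac{a^2}{6}\tfrac{f'''(0)}{f''(0)} + O(a^4)$. To pin down the stated remainder $\tfrac{a^4}{120}\tfrac{f'''''(\xi)}{f''(0)}$ exactly (rather than merely $O(a^4)$), I would feed the leading term back into the equation: the $x^2$ term contributes $O(a^4)$ coming from $f'''(0)$, and one checks this particular contribution is absorbed — or, more honestly, one rewrites the equation as $f''(0)x(a) = \tfrac{a^2}{6}f'''(0) + [\text{lower-order corrections}]$ and invokes a single application of Taylor's theorem with a Lagrange remainder to name the $O(a^4)$ coefficient as $\tfrac{f'''''(\xi)}{120}$ for some $\xi\in I$; the cleanest route is to define $x(a)$ implicitly and differentiate, but the remainder form in the statement strongly suggests the intended argument is: expand $f(a)-f(-a)$ to fifth order with Lagrange remainder, expand $2af'(x)$ only to the order needed, and solve. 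I would follow that route.

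Finally, for the second assertion I compose: $f(x(a)) = f(0) + f'(0)x(a) + \tfrac12 f''(0)x(a)^2 + O(x(a)^3)$. Substituting $x(a) = \tfrac{a^2}{6}\tfrac{f'''(0)}{f''(0)} + O(a^4)$, the linear term gives $f'(0)\cdot\tfrac{a^2}{6}\tfrac{f'''(0)}{f''(0)} = \tfrac{a^2}{6}\cdot\tfrac{f'(0)f'''(0)}{f''(0)} = \tfrac{a^2}{6}K_f(0)f''(0)$ by the very definition $K_f = f'f'''/(f'')^2$; the quadratic term $\tfrac12 f''(0)x(a)^2$ is $O(a^4)$, and all higher terms are $O(a^6)$. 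This yields $f(x_a) = f(0) + a^2 K_f(0)\tfrac{f''(0)}{6} + O(a^4)$ as claimed.

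The main obstacle is the bookkeeping of remainders in the first part: getting the \emph{exact} coefficient $\tfrac{1}{120}f'''''(\xi)$ with a single $\xi$ (as opposed to several intermediate points from separate applications of Taylor's theorem) requires care about which terms are grouped before the mean value theorem is applied. I expect this to be resolvable by writing the defining equation in the form $f''(0)(x(a) - \tfrac{a^2}{6}\tfrac{f'''(0)}{f''(0)}) = R(a)$ with $R$ a single $C^1$ function of $a$ vanishing to order $a^4$, and applying Taylor with Lagrange remainder to $R$ once; everything else is routine power-series manipulation.
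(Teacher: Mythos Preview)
Your approach is essentially the same as the paper's: Taylor-expand $f(a)-f(-a)$ to fifth order with Lagrange remainder, set it equal to $2af'(x)$ expanded in $x$, solve the resulting implicit equation for $x(a)$, and then compose with $f$ to get the second formula. The paper phrases the first step as implicit differentiation $x'(a)=-g_a/g_x$ but in substance does exactly what you do. Your identification of the bookkeeping issue with obtaining a \emph{single} $\xi$ in the $a^4$ remainder is valid and is a point the paper's proof treats informally; otherwise the arguments coincide.
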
 

\begin{proof}
We start with the implicit equation 
$$ g(x,a) = f(a) - f(-a) - 2 a f'(x) = 0 \;  $$
from which the chain rule gives in a familiar way the implicit 
differentiation formula $x'(a) = -g_a(0,0)/g_x(0,0)$. 
A Taylor expansion with Lagrangian rest term
$$  f(a) = f(0) + a f'(0) + \frac{a^2 f''(0)}{2!} 
         + \frac{a^3 f'''(0)}{3!} + \frac{a^4 f''''(0)}{4!} + \frac{a^5 f'''''(\xi)}{5!}  \; $$
for some $\xi \in (-b,b)$ shows that
\begin{eqnarray*}
  g_a(0,0) &=& 2 f'(0)  +  2 \frac{a^3 f'''(0)}{3!}  + 2 \frac{a^5 f'''''(\xi)}{5!}  - 2 f'(0) \\
  g_x(0,0) &=& -2a f''(0)   \; . 
\end{eqnarray*}
Therefore, $x'(a) = a^2 f'''(0)/(6 f''(0)) + a^4 f'''''(\xi)/(120 f''(0))$. And the rest is clear using 
$f(x_a) = f(0) + f'(0) x_a + f''(0) x_a^2/2 + \dots $. 
\end{proof}

{\bf Remark.} \\
Using Riemann sums using points which estimate the Rolle points can produce a numerical method for integration of
functions which are 5 times continuously differentiable
in the interior of some interval but possibly unbounded at the boundary. Let $x_k$ denote
the end points of a Riemann partition and $y_k$ the midpoints and $2a_k$ the length of the interval. Then 
$$  S_nf = \sum_{k=1}^n [f(y_k) a_k  + \frac{K_f(y_k)}{6} f''(y_k) a_k^3]   \;  $$
has adjusted the midpoint in every interval $[-a,a]$ by an approximation of $f(x_a)$. Of course, there
is a trade off, since we have to compute the $K$-derivative $K_f(y_k)$ at a point. But the result will be of the
order $a^5$ close to the real integral, like Simpson. 
Why is this helpful? It is the fact that the $K$-derivative is often finite, even so the function and its 
derivative can be unbounded. 
Any method, where intervals are divided in a definite way, like Simpson, can have larger error in those
cases. An example is the function $f(x) = \sin(\pi x)^{-s}$ for $s \in (0,1)$ which is integrable but where 
standard integration methods complain about the singularity. Actually, one of our results shows that the difference of 
Riemann sum and integral is of the order $n^2$. The function, which appeared in the discrete circle zeta
function, has a finite $K$-derivative. 

\vspace{12pt}
\bibliographystyle{plain}

\end{document}